\providecommand{\tabularnewline}{\\}
\numberwithin{equation}{section}
\theoremstyle{plain}
\newtheorem{thm}{\protect\theoremname}[section]
\theoremstyle{plain}
\newtheorem{lem}[thm]{\protect\lemmaname}
\theoremstyle{plain}
\newtheorem{prop}[thm]{\protect\propositionname}
\theoremstyle{plain}
\newtheorem{conjecture}[thm]{\protect\conjecturename}
\DeclareMathOperator{\pk}{pk}
\DeclareMathOperator{\Comp}{Comp}
\DeclareMathOperator{\des}{des}
\DeclareMathOperator{\st}{st}
\DeclareMathOperator{\std}{std}
\DeclareMathOperator{\val}{val}
\DeclareMathOperator{\fix}{fix}
\DeclareMathOperator{\pix}{pix}
\DeclareMathOperator{\rval}{rval}
\DeclareMathOperator{\dasc}{dasc}
\DeclareMathOperator{\ddes}{ddes}
\tikzstyle{pathdefault}=[draw, line width=1, solid, color=black]
\tikzstyle{nodedefault}=[circle, inner sep=1.1, fill=black]
\tikzstyle{nodered}=[circle, inner sep=1.1, fill=red]
\tikzstyle{nodeblue}=[circle, inner sep=1.1, fill=blue]
\tikzstyle{empty}=[]
\tikzstyle{nodeellipsis}=[circle, inner sep=0.5, fill=black]
\tikzstyle{pathcolor1}=[draw, line width=1, solid, color=red]
\tikzstyle{pathcolor2}=[draw, line width=1, solid, color=blue]
\tikzstyle{pathcolorlight}=[draw, line width=1, dotted, color=lightgray]
\tikzstyle{arbpathcolor0}=[line width=1, dashdotted, color=black]
\tikzstyle{arbpathcolor1}=[line width=1, densely dashed, color=red]
\tikzstyle{arbpathdefault}=[line width=1, densely dotted, color=blue]
\newcounter{id}
\newcommand{\drawlinedotswithstyle}[4]{
 \def\x{{#3}}
 \def\y{{#4}}
 \tikzstyle{thispathstyle}=[#1]
 \tikzstyle{thisnodestyle}=[#2]
 \setcounter{id}{-1} 
 \foreach \j in {#3}{\stepcounter{id}} 
 \foreach \i in {1,...,\the\value{id}}{  
  \path[thispathstyle] (\x[\i],\y[\i]) --(\x[\i-1],\y[\i-1]); 
 }
 \foreach \i in {1,...,\the\value{id}}{  
  \node[thisnodestyle] at (\x[\i],\y[\i]) {}; 
 }
 \node[thisnodestyle] at (\x[0],\y[0]) {}; 
}
\DeclareDocumentCommand{\drawlinedots}{ O{pathdefault} O{nodedefault} m m}{\drawlinedotswithstyle{#1}{#2}{#3}{#4}}
\DeclareDocumentCommand{\drawlinedotsred}{ O{pathcolor1} O{nodered} m m}{\drawlinedotswithstyle{#1}{#2}{#3}{#4}}
\DeclareDocumentCommand{\drawlinedotsblue}{ O{pathcolor2} O{nodeblue} m m}{\drawlinedotswithstyle{#1}{#2}{#3}{#4}}
\let\originalleft\left
\let\originalright\right
\renewcommand{\left}{\mathopen{}\mathclose\bgroup\originalleft}
\renewcommand{\right}{\aftergroup\egroup\originalright}
\newcommand{\leqnomode}{\tagsleft@true\let\veqno\@@leqno}
\newcommand{\reqnomode}{\tagsleft@false\let\veqno\@@eqno}
\providecommand{\conjecturename}{Conjecture}
\providecommand{\lemmaname}{Lemma}
\providecommand{\propositionname}{Proposition}
\providecommand{\theoremname}{Theorem}
\providecommand{\lemmaname}{Lemma}
\providecommand{\theoremname}{Theorem}
\providecommand{\propositionname}{Proposition}
\author[Chadi Bsila, Caroline E.\ Cox, Anna S.\ Hugo, Lindsey A.\ Styron, and Yan Zhuang]{Chadi Bsila \and Caroline E.\ Cox \and Anna S.\ Hugo \\ \and Lindsey A.\ Styron \and Yan Zhuang}
\title{Desarrangements revisited: statistics and pattern avoidance}
\affiliation{Department of Mathematics and Computer Science, Davidson College, Davidson, NC, USA}
\keywords{desarrangements, permutation statistics, pattern avoidance, pixed points}
\begin{document}

\publicationdata{vol. 27:1, Permutation Patterns 2024}{2025}{4}{10.46298/dmtcs.14375}{2024-10-01; 2024-10-01; 2025-07-29; 2025-09-06}{2025-10-13}

\maketitle

\begin{abstract}
\vspace{8bp}
A desarrangement is a permutation whose first ascent is even. Desarrangements
were introduced in the 1980s by Jacques D\'{e}sarm\'{e}nien, who
proved that they are in bijection with derangements. We revisit the
study of desarrangements, focusing on two themes: the refined enumeration
of desarrangements with respect to permutation statistics, and pattern
avoidance in desarrangements. Our main results include generating
function formulas for counting desarrangements by the number of descents,
peaks, valleys, double ascents, and double descents, as well as a
complete enumeration of desarrangements avoiding a prescribed set
of length 3 patterns. We find new interpretations of the Catalan,
Fine, Jacobsthal, and Fibonacci numbers in terms of pattern-avoiding
desarrangements.
\end{abstract}

{\let\thefootnote\relax\footnotetext{2020 \textit{Mathematics Subject Classification}. Primary 05A15; Secondary 05A05, 05A19.}}

\section{Introduction}

Let $\mathfrak{S}_{n}$ denote the symmetric group of permutations
of the set $[n]\coloneqq\{1,2,\dots,n\}$. We will exclusively write
permutations in one-line notation\textemdash that is, $\pi\in\mathfrak{S}_{n}$
is written as $\pi=\pi_{1}\pi_{2}\cdots\pi_{n}$. By convention, we
let $\mathfrak{S}_{0}$ consist of the empty permutation. The \textit{length}
$\left|\pi\right|$ of a permutation $\pi$ is its number of letters,
so if $\pi\in\mathfrak{S}_{n}$ then $\left|\pi\right|=n$.

Given a permutation $\pi$ of length $n$, we call $i\in[n-1]$ a
\textit{descent} of $\pi$ if $\pi_{i}>\pi_{i+1}$, and we let $\des(\pi)$
denote the number of descents of $\pi$. For example, the descents
of $\pi=31254$ are $1$ and $4$, so $\des(\pi)=2$. The descent
number $\des$ is a classical permutation statistic whose study dates
back to the work of MacMahon \cite{macmahon}. The distribution of
$\des$ over $\mathfrak{S}_{n}$ is given by the $n$th \textit{Eulerian
polynomial} 
\[
A_{n}(t)\coloneqq\sum_{\pi\in\mathfrak{S}_{n}}t^{\des(\pi)},
\]
and the sequence of Eulerian polynomials has the well-known exponential
generating function 
\begin{equation}
A(t,x)\coloneqq\sum_{n=0}^{\infty}A_{n}(t)\frac{x^{n}}{n!}=\frac{1-t}{e^{(t-1)x}-t}.\label{e-Euleriangf}
\end{equation}

Let us call $i\in[n]$ an \textit{ascent} of a length $n$ permutation
$\pi$ if $i$ is not a descent of $\pi$. Note that our definition
of ascent deviates from the definition that is most common in the
literature, which does not allow the last position of a permutation
to be an ascent.

A permutation is called a \textit{desarrangement} if its first ascent
is even. (By convention, we consider the empty permutation to be a
desarrangement as well.) Let $\mathfrak{D}_{n}$ denote the set of
desarrangements in $\mathfrak{S}_{n}$. See Table \ref{tb-desar}
for all (nonempty) desarrangements of length up to $n=5$. Notice
that the decreasing permutation $n\cdots21$ is a desarrangement if
and only if $n$ is even; we point this out because, when proving
facts about desarrangements, the decreasing desarrangements often
present a special case that must be considered separately.\renewcommand{\arraystretch}{1.2}
\begin{table}
\begin{centering}
\begin{tabular}{|c|c|c|c|c|c|ccccc|}
\cline{1-1} \cline{3-3} \cline{5-5} \cline{7-11} \cline{8-11} \cline{9-11} \cline{10-11} \cline{11-11} 
$\mathfrak{D}_{1}$ &  & $\mathfrak{D}_{2}$ &  & $\mathfrak{D}_{3}$ &  &  &  & $\mathfrak{D}_{4}$ &  & \tabularnewline
\cline{1-1} \cline{3-3} \cline{5-5} \cline{7-11} \cline{8-11} \cline{9-11} \cline{10-11} \cline{11-11} 
(none) & \quad{} & $21$ & \quad{} & $213$ & \quad{} & $2134$ & $3124$ & $3241$ & $4132$ & $4321$\tabularnewline
\cline{1-1} \cline{3-3} 
\multicolumn{1}{c}{} & \multicolumn{1}{c}{} & \multicolumn{1}{c}{} &  & $312$ &  & $2143$ & $3142$ & $4123$ & $4231$ & \tabularnewline
\cline{5-5} \cline{7-11} \cline{8-11} \cline{9-11} \cline{10-11} \cline{11-11} 
\end{tabular}\bigskip{}
\par\end{centering}
\begin{centering}
\begin{tabular}{|ccccccccccc|}
\hline 
 &  &  &  &  & $\mathfrak{D}_{5}$ &  &  &  &  & \tabularnewline
\hline 
$21345$ & $21534$ & $31425$ & $32415$ & $41235$ & $41523$ & $42513$ & $43521$ & $51342$ & $52341$ & $53412$\tabularnewline
$21354$ & $21543$ & $31452$ & $32451$ & $41253$ & $41532$ & $42531$ & $51234$ & $51423$ & $52413$ & $53421$\tabularnewline
$21435$ & $31245$ & $31524$ & $32514$ & $41325$ & $42315$ & $43215$ & $51243$ & $51432$ & $52431$ & $54213$\tabularnewline
$21453$ & $31254$ & $31542$ & $32541$ & $41352$ & $42351$ & $43512$ & $51324$ & $52314$ & $53214$ & $54312$\tabularnewline
\hline 
\end{tabular}
\par\end{centering}
\caption{\label{tb-desar}Desarrangements of length $\protect\leq5$.}
\end{table}

Let $d_{n}\coloneqq\left|\mathfrak{D}_{n}\right|$. The numbers $d_{n}$
are commonly called \textit{derangement numbers} because they also
count \textit{derangements}: permutations with no fixed points. The
first several derangement numbers are displayed below:
\begin{center}
\begin{tabular}{c|c|c|c|c|c|c|c|c|c|c|c|c}
$n$ & $0$ & $1$ & $2$ & $3$ & $4$ & $5$ & $6$ & $7$ & $8$ & $9$ & $10$ & $11$\tabularnewline
\hline 
$d_{n}$ & $1$ & $0$ & $1$ & $2$ & $9$ & $44$ & $265$ & $1854$ & $14833$ & $133496$ & $1334961$ & $14684570$\tabularnewline
\end{tabular}
\par\end{center}

\noindent See entry A000166 of the On-Line Encyclopedia of Integer
Sequences (OEIS) \cite{oeis}.

Desarrangements were introduced by Jacques D\'{e}sarm\'{e}nien for
the purpose of studying derangements. D\'{e}sarm\'{e}nien \cite{Desarmenien1984}
proved that desarrangements are in bijection with derangements, and
used desarrangements to give a combinatorial proof of the recurrence
$d_{n}=nd_{n-1}+(-1)^{n}$ for the derangement numbers. Shortly thereafter,
D\'{e}sarm\'{e}nien and Wachs \cite{Desarmenien1988,Desarmenien1993}
established a remarkable equidistribution concerning derangements
and desarrangements: the number of derangements with a given descent
set is equal to the the number of desarrangements whose inverses have
the same descent set. Foata and Han \cite{Foata2008a,Foata2008} later
introduced and studied a permutation statistic $\pix$, the number
of ``pixed points'', which we will define in Section \ref{ss-pkval}.
Just as derangements are permutations without fixed points, desarrangements
are precisely the permutations with no pixed points. See also \cite{Burstein2015,Foata2008b,Gessel1991,Han2009,Lin2013,Lin2016}
for a selection of other works related to desarrangements and pixed
points.

Nearly all of the existing literature on desarrangements focus on
their connection to derangements, and the work in this paper was motivated
by the desire to further study desarrangements for their own sake.
Here, we focus on two themes: the refined enumeration of desarrangements
by permutation statistics, and pattern avoidance in desarrangements.

Given a (non-negative integer-valued) permutation statistic $\st$,
let 
\[
D_{n}^{\st}(t)\coloneqq\sum_{\pi\in\mathfrak{D}_{n}}t^{\st(\pi)}
\]
be the polynomial which encodes the distribution of $\st$ over $\mathfrak{D}_{n}$,
and let 
\[
D^{\st}(t,x)\coloneqq\sum_{n=0}^{\infty}D_{n}^{\st}(t)\frac{x^{n}}{n!}
\]
denote the exponential generating function of the polynomials $D_{n}^{\st}(t)$.
The polynomials $D_{n}^{\des}(t)$ in particular capture the Eulerian
distribution over desarrangements, and we also consider four additional
statistics defined below. Given a permutation $\pi$ of length $n$,
we say that $i\in\{2,3,\dots,n-1\}$ is:
\begin{itemize}
\item a \textit{peak} of $\pi$ if $\pi_{i-1}<\pi_{i}>\pi_{i+1}$;
\item a \textit{valley} of $\pi$ if $\pi_{i-1}>\pi_{i}<\pi_{i+1}$;
\item a \textit{double ascent} of $\pi$ if $\pi_{i-1}<\pi_{i}<\pi_{i+1}$;
\item and a \textit{double descent} of $\pi$ if $\pi_{i-1}>\pi_{i}>\pi_{i+1}$.
\end{itemize}
Let $\pk(\pi)$, $\val(\pi)$, $\dasc(\pi)$, and $\ddes(\pi)$ denote
the number of peaks, valleys, double ascents, and double descents
of $\pi$, respectively. For example, if $\pi=214573689$, then $\pk(\pi)=1$,
$\val(\pi)=2$, $\dasc(\pi)=4$, and $\ddes(\pi)=0$. In Section \ref{s-stat},
we obtain formulas for the generating functions $D^{\des}(t,x)$,
$D^{\pk}(t,x)$, $D^{\val}(t,x)$, $D^{\dasc}(t,x)$, and $D^{\ddes}(t,x)$.
We will be able to apply (a generalization of) Gessel's ``run theorem''
\cite{gessel-thesis,Zhuang2016} to systematically derive our formulas,
although we will first prove some of them using more elementary means.

The study of pattern avoidance in permutations (and other combinatorial
structures) has received widespread attention in enumerative and algebraic
combinatorics, probability, and theoretical computer science since
the seminal work of Simion and Schmidt \cite{Simion1985}. To define
pattern avoidance, let us first introduce the notation $\std(w)$
for the \textit{standardization} of a word $w$. That is, if $w$
is a word consisting of $n$ distinct positive integers, then $\std(w)$
is the permutation in $\mathfrak{S}_{n}$ obtained by replacing the
smallest letter of $w$ by 1, the second smallest by 2, and so on.
For example, we have $\std(36815)=24513$. Given permutations $\sigma$
and $\pi$, an \textit{occurrence} of $\sigma$ in $\pi$ is a subsequence
of $\pi$ whose standardization is $\sigma$, and we say that $\pi$
\textit{avoids} $\sigma$ (as a \textit{pattern})\textemdash and that
$\pi$ is \textit{$\sigma$-avoiding}\textemdash if $\pi$ contains
no occurrences of $\sigma$. For example, $534$ is an occurrence
of $\sigma=312$ in $\pi=215364$, but $\pi=432651$ avoids $\sigma=312$.

Let $\mathfrak{S}_{n}(\sigma)$ denote the set of permutations in
$\mathfrak{S}_{n}$ which avoid $\sigma$. As noted above, we have
$432651\in\mathfrak{S}_{6}(312$). More generally, given a set $\Pi$
of patterns, let $\mathfrak{S}_{n}(\Pi)$ denote the set of permutations
in $\mathfrak{S}_{n}$ which avoid every pattern in $\Pi$. (Given
a specific $\Pi$, we will often omit the curly braces enclosing the
elements of $\Pi$ when writing out $\mathfrak{S}_{n}(\Pi)$ and in
similar notations.) For single patterns of length 3, it is well known
that for all $\sigma\in\mathfrak{S}_{3}$ and all $n\geq0$, the number
of permutations in $\mathfrak{S}_{n}(\sigma)$ is the $n$th Catalan
number. The enumeration of the avoidance classes $\mathfrak{S}_{n}(\Pi)$
for all length 3 pattern sets $\Pi\subseteq\mathfrak{S}_{3}$ was
completed by Simion and Schmidt \cite{Simion1985}. 

Robertson\textendash Saracino\textendash Zeilberger \cite{Robertson2002}
and Mansour\textendash Robertson \cite{Mansour2002} later studied
the distribution of $\fix$\textemdash the number of fixed points\textemdash over
all $\mathfrak{S}_{n}(\Pi)$ with $\Pi\subseteq\mathfrak{S}_{3}$
and $\left|\Pi\right|\leq3$. These results specialize to results
about pattern-avoiding derangements. We conduct an analogous study
of pattern-avoiding desarrangements.

Let $\mathfrak{D}_{n}(\Pi)$ denote the set of desarrangements in
$\mathfrak{D}_{n}$ which avoid every pattern in $\Pi$, and let $d_{n}(\Pi)\coloneqq\left|\mathfrak{D}_{n}(\Pi)\right|$.
In Section \ref{s-pa}, we will determine $d_{n}(\Pi)$ for all $\Pi\subseteq\mathfrak{S}_{3}$
\`{a} la Simion and Schmidt. Our results in this section are proved
using a mixture of combinatorial and generating function techniques,
and include new interpretations for the Catalan, Fine, Jacobsthal,
and Fibonacci numbers. We end Section \ref{s-pa} by comparing our
results on pattern-avoiding desarrangements with established results
on pattern-avoiding derangements, and presenting a list of conjectured
equidistributions of the statistics $\pix$ and $\fix$ over pattern
avoidance classes.

\section{\label{s-stat}Counting desarrangements by statistics}

Our goal in this section is to derive a formula for the generating
function 
\[
D^{\st}(t,x)=\sum_{n=0}^{\infty}D_{n}^{\st}(t)\frac{x^{n}}{n!}=\sum_{n=0}^{\infty}\sum_{\pi\in\mathfrak{D}_{n}}t^{\st(\pi)}\frac{x^{n}}{n!}
\]
for each of the statistics $\des$, $\pk$, $\val$, $\dasc$, and
$\ddes$. We will complete this task for the first three statistics
using elementary techniques, before introducing (a generalization
of) Gessel's run theorem, which provides a method for systematically
deriving our formulas for all five statistics. To end this section,
we will provide two examples showing how the run theorem can be used
to study the joint distribution of multiple statistics.

\subsection{Descents}

Our first result, for the exponential generating function $D^{\des}(t,x)$,
shall be proven by deriving and solving an appropriate differential
equation.
\begin{thm}
\label{t-des}We have
\[
D^{\des}(t,x)=\frac{(1-t)(1-2t-e^{-tx}+e^{(t-1)x})}{(1-2t)(e^{(t-1)x}-t)}.
\]
\end{thm}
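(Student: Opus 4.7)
The plan is to derive a first-order linear differential equation for $D^{\des}(t,x)$ by decomposing each desarrangement according to the position of its largest letter $n$, and then to solve it by a change of variables.

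I first partition $\mathfrak{D}_n$ (for $n\ge 2$) according to the position $i$ of $n$ in $\pi$. Since $n$ is the maximum of $[n]$, the entry $n$ is preceded by an ascent whenever $i\ge 2$, which rules out $i=2$ at once. The surviving cases are: (a)~$i=1$, where $\pi=n\tau$ and $\tau$ is a permutation of $[n-1]$ whose first ascent lies at an \emph{odd} position (an ``anti-desarrangement''); (b)~$i=n$, where $\pi=\sigma\, n$ with $\sigma\in\mathfrak{D}_{n-1}$; and (c)~$3\le i\le n-1$, where $\pi=\sigma'\, n\, \rho'$ with $\std(\sigma')\in\mathfrak{D}_{i-1}$ and $\rho'$ an arbitrary permutation of the complementary letters. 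The key point is that in cases (b) and (c) the standardized prefix inherits the ``first ascent at an even position'' property from $\pi$, since every letter of the prefix is strictly less than $n$, so the ascent/descent structure of $\pi$ on positions $1,\dots,i-1$ coincides with that of the standardized prefix.

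Tracking descents in each case (inserting $n$ at position $1$ or any $3\le i\le n-1$ adds exactly one new descent, while $i=n$ adds none), setting $B_n(t):=A_n(t)-D_n^{\des}(t)$ for the anti-desarrangement descent polynomial, and consolidating the boundary values $D_0^{\des}=1$ and $D_1^{\des}=0$, the case analysis collapses into the convolution recursion
\[
D_n^{\des}(t) = (1-2t)\,D_{n-1}^{\des}(t) + t\cdot \bigl[x^{n-1}/(n-1)!\bigr]\bigl(D^{\des}(t,x)\, A(t,x)\bigr) \qquad (n\ge 2),
\]
which at the level of exponential generating functions becomes the linear ODE
\[
\partial_x D^{\des} = (1-2t+tA)\,D^{\des} - (1-t), \qquad D^{\des}(t,0)=1.
\]

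To solve this, I substitute $F:=D^{\des}/A$. Using the identities $\partial_x A/A = (1-t)+tA$ and $(1-t)/A = e^{(t-1)x}-t$---both immediate from $A(t,x)=(1-t)/(e^{(t-1)x}-t)$---the equation for $F$ simplifies dramatically to
\[
\partial_x F + tF = t - e^{(t-1)x}, \qquad F(t,0)=1,
\]
which is readily solved via the integrating factor $e^{tx}$ to give $F = 1+(e^{(t-1)x}-e^{-tx})/(1-2t)$. Multiplying by $A(t,x)$ recovers the stated closed form. The main obstacle will be the case analysis for the recursion: verifying carefully that in case (c) the standardized prefix is indeed a desarrangement and that the standardized suffix in case (a) is an anti-desarrangement, and then consolidating the boundary terms so that the sum of contributions collapses into the clean convolution above.
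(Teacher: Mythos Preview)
Your proposal is correct and follows essentially the same approach as the paper: both decompose a nonempty desarrangement as $\pi=\tau n\rho$ around the largest letter, split into the three cases ($\tau$ empty, $\rho$ empty, neither empty), and arrive at the identical linear ODE $\partial_x D^{\des}=(1-2t+tA)D^{\des}-(1-t)$. The only difference is in how the ODE is solved: the paper uses variation of parameters directly, whereas you make the substitution $F=D^{\des}/A$, which---thanks to the Riccati-type identity $\partial_x A/A=(1-t)+tA$---collapses the equation to the constant-coefficient form $\partial_x F+tF=t-e^{(t-1)x}$ and is arguably a bit cleaner.
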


See Table \ref{tb-des} for the first ten polynomials $D_{n}^{\des}(t)$.

\renewcommand{\arraystretch}{1.2}
\begin{table}
\begin{centering}
\begin{tabular}{|c|c|c|c|c|}
\cline{1-2} \cline{2-2} \cline{4-5} \cline{5-5} 
$n$ & $D_{n}^{\des}(t)$ & \quad{} & $n$ & $D_{n}^{\des}(t)$\tabularnewline
\cline{1-2} \cline{2-2} \cline{4-5} \cline{5-5} 
$0$ & $1$ &  & $5$ & $4t+27t^{2}+13t^{3}$\tabularnewline
\cline{1-2} \cline{2-2} \cline{4-5} \cline{5-5} 
$1$ & $0$ &  & $6$ & $5t+94t^{2}+137t^{3}+28t^{4}+t^{5}$\tabularnewline
\cline{1-2} \cline{2-2} \cline{4-5} \cline{5-5} 
$2$ & $t$ &  & $7$ & $6t+270t^{2}+952t^{3}+566t^{4}+60t^{5}$\tabularnewline
\cline{1-2} \cline{2-2} \cline{4-5} \cline{5-5} 
$3$ & $2t$ &  & $8$ & $7t+699t^{2}+5093t^{3}+6825t^{4}+2085t^{5}+123t^{6}+t^{7}$\tabularnewline
\cline{1-2} \cline{2-2} \cline{4-5} \cline{5-5} 
$4$ & $3t+5t^{2}+t^{3}$ &  & $9$ & $8t+1701t^{2}+23195t^{3}+60513t^{4}+40649t^{5}+7179t^{6}+251t^{7}$\tabularnewline
\cline{1-2} \cline{2-2} \cline{4-5} \cline{5-5} 
\end{tabular}
\par\end{centering}
\caption{\label{tb-des}Distribution of $\protect\des$ over $\mathfrak{D}_{n}$
for $0\protect\leq n\protect\leq9$.}
\end{table}

\begin{proof}
For convenience, let $D^{\des}\coloneqq D^{\des}(t,x)$ and let $A\coloneqq A(t,x)$;
see (\ref{e-Euleriangf}). We first prove that the differential equation
\begin{equation}
\frac{\partial D^{\des}}{\partial x}=D^{\des}-1+t(D^{\des}-1)(A-1)+t(A-D^{\des})\label{e-desde}
\end{equation}
holds. Any nonempty desarrangement $\pi\in\mathfrak{D}_{n}$ can be
written as $\pi=\tau n\rho$, so that $\tau$ is the subsequence of
letters to the left of $n$ and $\rho$ is the subsequence of letters
to its right. If $\rho$ is empty, then $\tau$ is a nonempty desarrangement
with the same number of descents as $\pi$; this case contributes
the term $D^{\des}-1$ to (\ref{e-desde}). If $\tau$ is empty, then
$\rho$ is a non-desarrangement with one less descent than $\pi$;
this case contributes $t(A-D^{\des})$ to (\ref{e-desde}). Finally,
if neither $\tau$ nor $\rho$ are empty, then $\des(\pi)=\des(\tau)+\des(\rho)+1$;
this case contributes the remaining term $t(D^{\des}-1)(A-1)$.

Let us now rewrite (\ref{e-desde}) as
\begin{align}
\frac{\partial D^{\des}}{\partial x} & =(1-2t+tA)D^{\des}+t-1\nonumber \\
 & =\left(1-2t+\frac{t(1-t)}{e^{(t-1)x}-t}\right)D^{\des}+t-1,\label{e-desde2}
\end{align}
where we obtain the second line by substituting in (\ref{e-Euleriangf}).
We shall solve (\ref{e-desde2}) using the method of variation of
parameters. The homogeneous linear differential equation 
\[
\frac{\partial D_{0}}{\partial x}=\left(1-2t+\frac{t(1-t)}{e^{(t-1)x}-t}\right)D_{0}
\]
has general solution 
\[
D_{0}=\frac{K(t)e^{-tx}}{e^{(t-1)x}-t}
\]
where $K(t)$ is constant with respect to $x$. Since
\[
(t-1)e^{tx}\left(\frac{e^{(t-1)x}}{2t-1}-1\right)
\]
is an antiderivative of $e^{-tx}/(e^{(t-1)x}-t)$, it follows that
(\ref{e-desde2}) has general solution 
\begin{equation}
D^{\des}(x,t)=\frac{t-1}{e^{(t-1)x}-t}\left(\frac{e^{(t-1)x}}{2t-1}-1\right)+\frac{K(t)e^{-tx}}{e^{(t-1)x}-t}.\label{e-desgensol}
\end{equation}
We now use the initial condition $D^{\des}(0,t)=1$ to deduce $K(t)=(t-1)/(1-2t)$
from (\ref{e-desgensol}), and substituting this expression for $K(t)$
into (\ref{e-desgensol}) yields the desired formula.
\end{proof}

\subsection{\label{ss-pkval}Peaks and valleys}

We now proceed to the peak and valley number statistics. It is easy
to see that $\pk$ and $\val$ are equidistributed over the full symmetric
group $\mathfrak{S}_{n}$. Indeed, consider the\textit{ complement}
$\pi^{c}$ of $\pi\in\mathfrak{S}_{n}$ defined by 
\[
\pi^{c}\coloneqq(n+1-\pi_{1})\,(n+1-\pi_{2})\,\cdots\,(n+1-\pi_{n});
\]
then $\pk(\pi)=\val(\pi^{c})$ for all permutations $\pi$. However,
the property of being a desarrangement is not invariant under complementation,
and the distributions of these two statistics over $\mathfrak{D}_{n}$
are different. For example, the defining property of a desarrangement
guarantees that every nondecreasing desarrangement has at least one
valley, while there are far more desarrangements with no peaks. The
first ten polynomials $D^{\pk}(t,x)$ and $D^{\val}(t,x)$ are displayed
in Tables \ref{tb-pk} and \ref{tb-val}, respectively, and the next
theorem gives their exponential generating functions.
\begin{table}
\begin{centering}
\begin{tabular}{|c|c|c|c|c|}
\cline{1-2} \cline{2-2} \cline{4-5} \cline{5-5} 
$n$ & $D_{n}^{\pk}(t)$ & \quad{} & $n$ & $D_{n}^{\pk}(t)$\tabularnewline
\cline{1-2} \cline{2-2} \cline{4-5} \cline{5-5} 
$0$ & $1$ &  & $5$ & $8+36t$\tabularnewline
\cline{1-2} \cline{2-2} \cline{4-5} \cline{5-5} 
$1$ & $0$ &  & $6$ & $16+188t+61t^{2}$\tabularnewline
\cline{1-2} \cline{2-2} \cline{4-5} \cline{5-5} 
$2$ & $1$ &  & $7$ & $32+864t+958t^{2}$\tabularnewline
\cline{1-2} \cline{2-2} \cline{4-5} \cline{5-5} 
$3$ & $2$ &  & $8$ & $64+3728t+9656t^{2}+1385t^{3}$\tabularnewline
\cline{1-2} \cline{2-2} \cline{4-5} \cline{5-5} 
$4$ & $4+5t$ &  & $9$ & $128+15552t+79760t^{2}+38056t^{3}$\tabularnewline
\cline{1-2} \cline{2-2} \cline{4-5} \cline{5-5} 
\end{tabular}
\par\end{centering}
\caption{\label{tb-pk}Distribution of $\protect\pk$ over $\mathfrak{D}_{n}$
for $0\protect\leq n\protect\leq9$.}
\end{table}
\begin{table}
\begin{centering}
\begin{tabular}{|c|c|c|c|c|}
\cline{1-2} \cline{2-2} \cline{4-5} \cline{5-5} 
$n$ & $D_{n}^{\val}(t)$ & \quad{} & $n$ & $D_{n}^{\val}(t)$\tabularnewline
\cline{1-2} \cline{2-2} \cline{4-5} \cline{5-5} 
$0$ & $1$ &  & $5$ & $28t+16t^{2}$\tabularnewline
\cline{1-2} \cline{2-2} \cline{4-5} \cline{5-5} 
$1$ & $0$ &  & $6$ & $1+88t+176t^{2}$\tabularnewline
\cline{1-2} \cline{2-2} \cline{4-5} \cline{5-5} 
$2$ & $1$ &  & $7$ & $270t+1312t^{2}+272t^{3}$\tabularnewline
\cline{1-2} \cline{2-2} \cline{4-5} \cline{5-5} 
$3$ & $2t$ &  & $8$ & $1+816t+8256t^{2}+5760t^{3}$\tabularnewline
\cline{1-2} \cline{2-2} \cline{4-5} \cline{5-5} 
$4$ & $1+8t$ &  & $9$ & $2456t+47520t^{2}+75584t^{3}+7936t^{4}$\tabularnewline
\cline{1-2} \cline{2-2} \cline{4-5} \cline{5-5} 
\end{tabular}
\par\end{centering}
\caption{\label{tb-val}Distribution of $\protect\val$ over $\mathfrak{D}_{n}$
for $0\protect\leq n\protect\leq9$.}
\end{table}

\begin{thm}
\label{t-pkval}We have \leqnomode
\[
\tag{{a}}\qquad D^{\pk}(t,x)=1-t^{-1}+\frac{t^{-1}e^{-x}\sqrt{1-t}}{\sqrt{1-t}\cosh\left(x\sqrt{1-t}\right)-\sinh\left(x\sqrt{1-t}\right)}
\]
and 
\[
\tag{{b}}\qquad D^{\val}(t,x)=\frac{e^{-x}\sqrt{1-t}}{\sqrt{1-t}-\tanh(x\sqrt{1-t})}.
\]
\end{thm}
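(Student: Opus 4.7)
My plan is to use the pix decomposition: each $\pi \in \mathfrak{S}_n$ can be written uniquely as $\pi = u \cdot w$, where $u$ lists the pixed points of $\pi$ in increasing order (occupying the first $|u|$ positions) and $w$ is a desarrangement on the remaining values (occupying the last $|\pi| - |u|$ positions). Uniqueness follows because a nonempty desarrangement starts with a descent and has first ascent at an even position, whereas prepending any single letter of $u$ to the start of $w$ would shift the parity of its first ascent. In EGF language this bijection is a binomial convolution with $e^x$, so statistical identities between $\pi$ and $w$ translate into clean relations between $D^{\st}(t,x)$ and EGFs of the matching statistic over all of $\mathfrak{S}_n$.

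For part (a), the key identity I would establish is
\[
\lpk(\pi) = \pk(w) + [w \ne \emptyset],
\]
where $\lpk$ denotes the left-peak statistic counting positions $i \in \{1,\ldots,n-1\}$ with $\pi_{i-1} < \pi_i > \pi_{i+1}$ under the convention $\pi_0 := 0$. The proof is a case analysis at the junction of $u$ and $w$: no peak lies strictly inside the increasing prefix $u$; if $w$ is nonempty then exactly one peak is created at the junction (at position $|u|$ if $u_{|u|} > w_1$, or at position $|u|+1$ otherwise, using that a nonempty desarrangement starts with a descent); and interior peaks of $w$ correspond one-to-one with peaks of $\pi$ in the suffix. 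Translating to EGFs yields $Q(t,x) = e^x \bigl(1 - t + t\, D^{\pk}(t,x)\bigr)$, where $Q(t,x)$ is the EGF of $\lpk$ over $\mathfrak{S}_n$. Solving for $D^{\pk}$ and substituting the known closed form $Q(t,x) = \sqrt{1-t}/(\sqrt{1-t}\cosh(x\sqrt{1-t}) - \sinh(x\sqrt{1-t}))$ produces formula (a).

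Part (b) is cleaner because of the identity $\val(\pi) = \val(w)$: valleys cannot occur within the increasing prefix $u$, nor at the junction (since $w$ starting with a descent rules out a valley at position $|u|+1$, and $u$ being increasing rules out one at position $|u|$), and interior valleys of $w$ lift directly to valleys of $\pi$. This gives $V(t,x) = e^x\, D^{\val}(t,x)$, where $V(t,x) = \sqrt{1-t}/(\sqrt{1-t} - \tanh(x\sqrt{1-t}))$ is the classical EGF of interior valleys over $\mathfrak{S}_n$. Dividing by $e^x$ yields (b).

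The primary obstacle is verifying the two statistical identities through careful case analysis at the pix/desarrangement junction, in particular handling the subcases $|u| \geq 2$ versus $|u| = 1$ and tracking the relative order of $u_{|u|}$ and $w_1$ for peaks. The secondary ingredient is the closed-form EGFs for $\lpk$ and $\val$ over $\mathfrak{S}_n$; these are classical and can be rederived via Gessel's run theorem, which the authors introduce later in this section.
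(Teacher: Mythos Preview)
Your proposal is correct and follows essentially the same approach as the paper: both use the pixed factorization $\pi=\iota\delta$ to convert the problem into a known EGF over $\mathfrak{S}_n$, with part (b) proved identically via $\val(\pi)=\val(\delta)$. The only cosmetic difference is in part (a): you use the left-peak statistic $\lpk$ and the identity $\lpk(\pi)=\pk(\delta)+[\delta\neq\emptyset]$, whereas the paper uses the right-valley statistic $\rval$ together with $\rval(\pi)=\rval(\delta)$ and $\rval(\delta)=\pk(\delta)+1$ for nonempty $\delta$; since $\lpk$ and $\rval$ are equidistributed over $\mathfrak{S}_n$ (so your $Q(t,x)$ equals the paper's $\grave{P}(t,x)$), the two computations coincide.
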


Let 
\[
P\coloneqq P(t,x)=\sum_{n=0}^{\infty}\sum_{\pi\in\mathfrak{S}_{n}}t^{\pk(\pi)}\frac{x^{n}}{n!}=\sum_{n=0}^{\infty}\sum_{\pi\in\mathfrak{S}_{n}}t^{\val(\pi)}\frac{x^{n}}{n!}.
\]
It is well known\textemdash see, e.g., \cite[Theorem 9]{Zhuang2016}\textemdash that
\begin{equation}
P(t,x)=\frac{1}{1-\frac{\tanh(x\sqrt{1-t})}{\sqrt{1-t}}}.\label{e-P}
\end{equation}
Similar reasoning as in the proof of Theorem \ref{t-des} gives us
the differential equations
\[
\frac{\partial D^{\pk}}{\partial x}=\left(1+t(D^{\pk}-1)\right)(P-1)
\]
and 
\[
\frac{\partial D^{\val}}{\partial x}=\left(D^{\val}-e^{-x}\right)\left(1+t(P-1)\right),
\]
which can be solved in the same way as our differential equation for
$D^{\des}$. However, let us also demonstrate a different proof which
does not involve differential equations.

Observe from Theorem \ref{t-pkval} (b) and Equation (\ref{e-P})
that $D^{\val}(t,x)=e^{-x}P(t,x)$, and there is a simple combinatorial
explanation for this fact. Every permutation $\pi$ can be written
uniquely as a concatenation $\pi=\iota\delta$ where $\iota$ is an
increasing sequence and $\delta$ is a desarrangement. For example,
$\pi=46785213$ is the concatenation of $\iota=467$ and $\delta=85213$.
Foata and Han \cite{Foata2008a} call this decomposition the \textit{pixed
factorization} of $\pi$ and the letters of $\iota$ \textit{pixed
points}. Since $\iota$ contributes no valleys to $\pi$, we have
\begin{equation}
P(t,x)=e^{x}D^{\val}(t,x),\label{e-PD}
\end{equation}
and now Theorem \ref{t-pkval} (b) follows immediately from substituting
(\ref{e-P}) into (\ref{e-PD}).

We note that Gessel's ``hook factorization'' \cite{Gessel1991} provides
a refinement of Foata and Han's pixed factorization, and we may equivalently
define pixed points to be the letters in the first factor of the hook
factorization. However, we will not need the hook factorization for
our present work.

One may hope that the pixed factorization can be used to prove results
analogous to (\ref{e-PD}) but for other statistics. However, this
can be done only if prepending an increasing sequence to a desarrangement
changes the statistic value in a manageable way, which is the case
for the $\val$ statistic (as the number of valleys is unchanged)
but is not true in general. For example, prepending an increasing
sequence $\iota$ to a desarrangement $\delta$ may either add one
to the number of peaks or leave it unchanged, and this depends on
whether the last letter of $\iota$ is greater than the first letter
of $\delta$ (along with the lengths of $\iota$ and $\delta$). Nonetheless,
we can prove our formula for $D^{\pk}(t,x)$ by considering another
statistic, closely related to valleys, which also behaves nicely with
respect to the pixed factorization.

\begin{proof}[of Theorem \ref{t-pkval} \textup{(}a\textup{)}]
Let us call $i\in\{2,3,\dots,n\}$ a \textit{right valley} of $\pi\in\mathfrak{S}_{n}$
if either $i$ is a valley of $\pi$, or if $i=n$ and $\pi_{n-1}>\pi_{n}$.
Let $\rval(\pi)$ denote the number of right valleys of $\pi$, and
let 
\[
\grave{P}(t,x)\coloneqq\sum_{n=0}^{\infty}\sum_{\pi\in\mathfrak{S}_{n}}t^{\rval(\pi)}\frac{x^{n}}{n!}.
\]
Since the pixed points of a permutation contribute no right valleys,
we have
\[
\grave{P}(t,x)=e^{x}D^{\rval}(t,x).
\]
Moreover, it is straightforward to verify that every nonempty desarrangement
$\pi$ satisfies $\rval(\pi)=\pk(\pi)+1$. Therefore, we have
\begin{align}
D^{\pk}(t,x) & =1+t^{-1}\left(D^{\rval}(t,x)-1\right)=1+t^{-1}\left(e^{-x}\grave{P}(t,x)-1\right).\label{e-pkrval}
\end{align}
Substituting the known formula
\[
\grave{P}(t,x)=\frac{\sqrt{1-t}}{\sqrt{1-t}\cosh(x\sqrt{1-t})-\sinh(x\sqrt{1-t})}
\]
\cite[Theorem 10]{Zhuang2016} into (\ref{e-pkrval}) gives us our
desired formula for $D^{\pk}(t,x)$.
\end{proof}

\subsection{The run theorem}

While we were able to obtain our first several results by elementary
means, let us now introduce a more sophisticated technique that can
be used to count desarrangements by a wide variety of statistics related
to the ``run structure'' of permutations.

Every permutation can be uniquely decomposed as a sequence of \textit{increasing
runs}: maximal increasing consecutive subsequences. For instance,
the increasing runs of $317542689$ are $3$, $17$, $5$, $4$, and
$2689$. We will often refer to increasing runs as simply \textit{runs}
for the sake of brevity. Given a permutation $\pi$, let $\Comp\pi$
denote the \textit{descent composition} of $\pi$: the integer composition
whose parts are the lengths of the increasing runs of $\pi$ in the
order that they appear. Continuing our example, we have $\Comp317542689=(1,2,1,1,4)$.
Let us call an increasing run \textit{short} if it has length 1, and
\textit{long} if it has length at least 2.

Various families of permutations are characterized by restrictions
on their run structure. For example, (up-down) alternating permutations
are precisely the permutations with descent compositions of the form
$(2,2,\dots,2)$ or $(2,2,\dots,2,1)$. Desarrangements are permutations
which either begin with an odd number of short runs before its first
long run, or consist only of an even number of short runs. Similarly,
all of the permutation statistics we have considered thus far are
expressible in terms of their run structure. For instance, the number
of descents of a permutation is one less than its number of increasing
runs, and its number of valleys is equal to its number of non-initial
long runs (every valley is at the beginning of a non-initial long
run, and every non-initial long run begins with a valley). 

Gessel's run theorem \cite{gessel-thesis}, along with a generalized
version due to Zhuang \cite{Zhuang2016}, provides a systematic method
for counting permutations with restrictions on runs by statistics
expressible in terms of runs. The run theorem is a reciprocity formula
involving noncommutative symmetric functions, and the generalized
version involves matrices of noncommutative symmetric functions, but
here we will provide a treatment of the (generalized) run theorem
without using the language of noncommutative symmetric functions.

Fix a positive integer $m$, and let $G$ be a directed graph on the
vertex set $[m]$ such that each edge $(i,j)$ of $G$ is assigned
a set $P_{i,j}$ of positive integers. Let us say that an integer
composition $L=(L_{1},L_{2},\dots,L_{k})$ is ($i,j)$-\textit{admissible}
if there exists in $G$ a sequence of edges $(i_{1},i_{2})(i_{2},i_{3})\cdots(i_{k},i_{k+1})$\textemdash where
$i=i_{1}$ and $j=i_{k+1}$\textemdash such that $L_{l}\in P_{i_{l},i_{l+1}}$
for all $1\leq l\leq k$; in this case, we say that $L$ is $(i,j)$-admissible
\textit{along} \textit{the path} $(i_{1},i_{2})(i_{2},i_{3})\cdots(i_{k},i_{k+1})$.

For example, suppose that $G$ is the directed graph in Figure \ref{f-desar}.
Then $(1,1,1,3,4,1,2)$ is $(1,3)$-admissible whereas $(2,1,1,4,3)$
is not. In fact, the $(1,3)$-admissible compositions are precisely
the descent compositions of nondecreasing desarrangements.
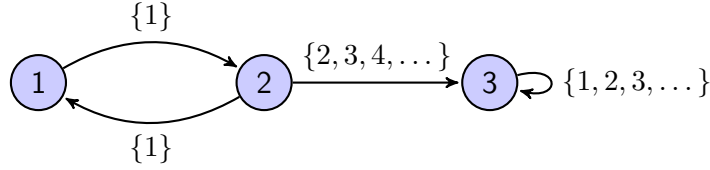
\begin{figure}
\begin{center}
\begin{tikzpicture}[->,>=stealth',shorten >=1pt,auto,node distance=3cm,   thick,main node/.style={circle,fill=blue!20,draw,font=\sffamily}]

\node[main node] (1) {1};
\node[main node] (2) [right of =1] {2};
\node[main node] (3) [right of =2] {3};

\path[every node/.style={font=\sffamily\small}]     
(1) edge [bend left] node {$\{1\}$} (2)
(2) edge [bend left] node {$\{1\}$} (1)
(2) edge node {$\{2,3,4,\dots\}$} (3)
(3) edge [loop right] node {$\{1,2,3,\dots\}$} (3);

\end{tikzpicture}
\end{center}

\caption{\label{f-desar}Directed graph for desarrangements.}
\end{figure}

Given a matrix $A$ consisting of ordinary generating functions in
the variable $x$, let $\hat{A}$ be the matrix obtained from $A$
by replacing each entry with its corresponding exponential generating
function; in other words, we apply to each entry the map defined by
$x^{n}\mapsto x^{n}/n!$ for all $n\geq0$ and extending linearly.
For example, 
\[
\text{if }A=\begin{bmatrix}1 & {\displaystyle \frac{x^{3}}{1-x^{2}}}\vphantom{{\displaystyle \frac{\frac{dy}{dx}}{\frac{dy}{dx}}}}\\
0 & x+x^{2}+x^{4}
\end{bmatrix},\quad\text{then }\hat{A}=\begin{bmatrix}1 & \sinh x-x\\
0 & x+{\displaystyle \frac{x^{2}}{2}}+{\displaystyle \frac{x^{4}}{24}}\vphantom{{\displaystyle \frac{\frac{dy}{dx}}{\frac{dy}{dx}}}}
\end{bmatrix}.
\]
The formula
\[
\hat{A}(x)=\frac{1}{2\pi}\int_{-\pi}^{\pi}A(xe^{-i\theta})e^{e^{i\theta}}d\theta
\]
\cite[p. 566]{Graham1994} can be used for this conversion. We shall
use the notation $\mathbf{I}_{m}$ for the $m$-by-$m$ identity matrix.
\begin{thm}[Generalized run theorem]
\label{t-runthm}Let $G$ be a directed graph on $[m]$ such that
each edge $(i,j)$ of $G$ is assigned a set $P_{i,j}$ of positive
integers. Suppose that, for all $i,j\in[m]$, no composition is $(i,j)$-admissible
along more than one path. Let $\{\,w_{a}^{(i,j)}:a\in P_{i,j}\,\}$
be a set of weights, with $w_{a}^{(i,j)}=0$ if $a\notin P_{i,j}$.
Given a composition $L=(L_{1},L_{2},\dots,L_{k})$ and $1\leq i,j\leq m$,
let $w^{(i,j)}(L)=w_{L_{1}}^{e_{1}}w_{L_{2}}^{e_{k}}\cdots w_{L_{k}}^{e_{k}}$
if $L$ is $(i,j)$-admissible along the path $e_{1}e_{2}\cdots e_{k}$,
and let $w^{(i,j)}(L)=0$ otherwise. Taking
\[
A\coloneqq\left(\mathbf{I}_{m}+\begin{bmatrix}{\displaystyle \sum_{k=1}^{\infty}w_{k}^{(1,1)}x^{k}}\vphantom{{\displaystyle \frac{\frac{dy_{a}}{dx_{a}}}{\frac{dy_{a}}{dx_{a}}}}} & \cdots & {\displaystyle \sum_{k=1}^{\infty}w_{k}^{(1,m)}x^{k}}\\
\vdots & \ddots & \vdots\\
{\displaystyle \sum_{k=1}^{\infty}w_{k}^{(m,1)}x^{k}}\vphantom{{\displaystyle \frac{\frac{dy_{a}}{dx_{a}}}{\frac{dy_{a}}{dx_{a}}}}} & \cdots & {\displaystyle \sum_{k=1}^{\infty}w_{k}^{(m,m)}x^{k}}
\end{bmatrix}\right)^{-1},
\]
the exponential generating function 
\[
\sum_{n=0}^{\infty}\sum_{\pi\in\mathfrak{S}_{n}}w^{(i,j)}(\Comp\pi)\frac{x^{n}}{n!}
\]
is the $(i,j)$th entry of $(\hat{A})^{-1}$.
\end{thm}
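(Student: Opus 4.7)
The plan is to verify directly that $\hat A \cdot F = I_m$, where $F$ is the $m \times m$ matrix of exponential generating functions whose $(i,j)$-entry is the claimed EGF $\sum_{n \geq 0}\sum_{\pi \in \mathfrak{S}_n} w^{(i,j)}(\Comp\pi)\, x^n/n!$. The two main ingredients are the classical identity that the multinomial coefficient $\binom{n}{K_1, \dots, K_{\ell(K)}}$ equals the number of permutations of $[n]$ whose descent set is contained in the descent set of $K$, together with the uniqueness-of-path hypothesis, which makes $w^{(i,j)}(L)$ factor cleanly along concatenations of compositions.

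First I would expand $F_{i,j}$ as $\sum_n \frac{x^n}{n!} \sum_{L \vDash n} \beta_L \, w^{(i,j)}(L)$, where $\beta_L$ counts $\pi \in \mathfrak{S}_n$ with $\Comp\pi = L$, and apply inclusion--exclusion (M\"obius inversion on the Boolean refinement lattice of compositions) to obtain
\[
\beta_L \;=\; \sum_{K \text{ coarsens } L} (-1)^{\ell(L) - \ell(K)} \binom{n}{K_1, \dots, K_{\ell(K)}}.
\]
After substituting and swapping the order of summation, the outer sum becomes a sum over $K = (K_1, \dots, K_s) \vDash n$ with inner quantity $\sum_{L \text{ refines } K}(-1)^{\ell(L) - \ell(K)} w^{(i,j)}(L)$. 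Each refinement $L$ of $K$ decomposes uniquely as a concatenation $L = L^{(1)} \cdots L^{(s)}$ with $L^{(r)} \vDash K_r$; under the uniqueness-of-path hypothesis, $L$ is $(i,j)$-admissible precisely when there exist intermediate vertices $i = i_0, i_1, \dots, i_s = j$ with each $L^{(r)}$ being $(i_{r-1}, i_r)$-admissible, and then both $w^{(i,j)}(L) = \prod_r w^{(i_{r-1}, i_r)}(L^{(r)})$ and $(-1)^{\ell(L) - \ell(K)} = \prod_r (-1)^{\ell(L^{(r)}) - 1}$ factor along this decomposition. The inner sum therefore becomes the $(i,j)$-entry of a matrix product $C(K_1)\, C(K_2) \cdots C(K_s)$, where $C(n)$ is the $m \times m$ matrix with $(a,b)$-entry $\sum_{L \vDash n,\,(a,b)\text{-adm.}} (-1)^{\ell(L)-1} w^{(a,b)}(L)$.

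Using $\frac{x^n}{n!} \binom{n}{K_1, \dots, K_s} = \prod_r \frac{x^{K_r}}{K_r!}$ and summing over all $s \geq 0$ and all positive parts then yields a geometric series
\[
F \;=\; \sum_{s \geq 0} Q^s \;=\; (I_m - Q)^{-1}, \qquad Q(x) \;\coloneqq\; \sum_{n \geq 1} \frac{x^n}{n!}\, C(n).
\]
To finish, I would expand $A = (I_m + W)^{-1} = \sum_{l \geq 0} (-W)^l$ in OGFs and convert term-by-term to EGFs, which gives $\hat A_{a,b} = [a=b] - Q_{a,b}$ and hence $\hat A = I_m - Q$, so that $F = \hat A^{-1}$, as claimed. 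The main obstacle is the middle step: one must deploy the uniqueness-of-path hypothesis exactly where needed, so that admissibility, weight, and sign all factor through the decomposition $L = L^{(1)} \cdots L^{(s)}$; once these factorizations are in place, the rest is routine formal-power-series bookkeeping.
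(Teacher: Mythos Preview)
Your argument is correct. The key steps---the M\"obius-inversion formula $\beta_L = \sum_{K \text{ coarsens } L} (-1)^{\ell(L)-\ell(K)} \binom{n}{K}$, the factorization of $w^{(i,j)}(L)$ across a block decomposition $L = L^{(1)}\cdots L^{(s)}$ via the uniqueness-of-path hypothesis, and the identification $\hat A = I_m - Q$---all check out. One small point worth making explicit: the uniqueness hypothesis is needed \emph{twice}, once in the middle factorization step (as you flag) and once again in the last step when you expand $(I_m+W)^{-1}=\sum_{l\ge 0}(-W)^l$ and read off the coefficient of $x^n$, since collapsing the sum over intermediate vertices in $[W^l]_{a,b}$ to the single quantity $w^{(a,b)}(L)$ relies on there being at most one admissible path.

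Your route is genuinely different from the paper's. The paper does not prove this theorem from scratch at all: it obtains it by citing \cite[Theorem~2]{Zhuang2016}, a reciprocity result stated in terms of noncommutative symmetric functions, and then applying the algebra homomorphism $\mathbf{h}_n \mapsto x^n/n!$. Your proof, by contrast, is self-contained and elementary---it works entirely at the level of ordinary and exponential generating functions, using only the classical fact that $\binom{n}{K_1,\dots,K_s}$ counts permutations with descent set contained in that of $K$. What the paper's approach buys is generality: the same noncommutative-symmetric-function identity can be pushed through other homomorphisms (e.g.\ $\mathbf{h}_n \mapsto x^n/[n]_q!$ to track inversions), whereas your argument is tailored to the EGF specialization. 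What your approach buys is transparency and independence from the machinery of \cite{Zhuang2016}.
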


This theorem is the result of \cite[Theorem 2]{Zhuang2016} upon applying
the algebra homomorphism $\mathbf{h}_{n}\mapsto x^{n}/n!$ (where
$\mathbf{h}_{n}$ is the noncommutative complete symmetric function
of homogeneous degree $n$). As such, we can think of Theorem \ref{t-runthm}
as being the generalized run theorem projected down to the setting
of exponential generating functions, although in this paper we shall
refer to Theorem \ref{t-runthm} itself as the ``generalized run theorem''.
It is worth mentioning that variations of Theorem \ref{t-runthm}
can be obtained using other homomorphisms. For example, applying the
homomorphism $\mathbf{h}_{n}\mapsto x^{n}/[n]_{q}!$\textemdash where
$[n]_{q}!$ is the $n$th $q$-factorial\textemdash leads to a $q$-analogue
of Theorem \ref{t-runthm} which refines by the inversion number.
See \cite{Gessel2020} for an exposition of some of these homomorphisms.

As a preliminary example showcasing the utility of the generalized
run theorem, let us derive the well-known exponential generating function
$e^{-x}/(1-x)$ for the derangement numbers. Take $G$ to be the graph
in Figure \ref{f-desar} and set all weights $w_{k}^{(i,j)}$ equal
to 1. Observe that
\[
w^{(1,3)}(\Comp\pi)=\begin{cases}
1, & \text{if }\pi\text{ is a nondecreasing desarrangement,}\\
0, & \text{otherwise,}
\end{cases}
\]
so we will apply the generalized run theorem to find the exponential
generating function 
\[
\sum_{n=0}^{\infty}\sum_{\pi\in\mathfrak{S}_{n}}w^{(1,3)}(\Comp\pi)\frac{x^{n}}{n!}
\]
and then add the exponential generating function $\cosh x$ for decreasing
desarrangements. We begin with the matrix
\[
A=\begin{bmatrix}1 & x & 0\\
x & 1 & {\displaystyle \frac{x^{2}}{1-x}}\vphantom{{\displaystyle \frac{\frac{dy}{dx}}{\frac{dy}{dx}}}}\\
0 & 0 & {\displaystyle \frac{1}{1-x}}\vphantom{{\displaystyle \frac{\frac{dy}{dx}}{\frac{dy}{dx}}}}
\end{bmatrix}^{-1}=\begin{bmatrix}{\displaystyle \frac{1}{1-x^{2}}}\vphantom{{\displaystyle \frac{\frac{dy}{dx}}{\frac{dy}{dx}}}} & {\displaystyle \frac{-x}{1-x^{2}}} & {\displaystyle \frac{x^{3}}{1-x^{2}}}\\
{\displaystyle \frac{-x}{1-x^{2}}}\vphantom{{\displaystyle \frac{\frac{dy}{dx}}{\frac{dy}{dx}}}} & {\displaystyle \frac{1}{1-x^{2}}} & {\displaystyle \frac{-x^{2}}{1-x^{2}}}\\
0 & 0 & 1-x
\end{bmatrix}
\]
of ordinary generating functions, and the matrix of the corresponding
exponential generating functions is
\[
\hat{A}=\begin{bmatrix}\cosh x & {\displaystyle -\sinh x} & \sinh x-x\\
{\displaystyle -\sinh x} & \cosh x & {\displaystyle 1-\cosh x}\\
0 & 0 & 1-x
\end{bmatrix}.
\]
Taking the inverse of $\hat{A}$ yields 
\[
(\hat{A})^{-1}=\begin{bmatrix}\cosh x & {\displaystyle \sinh x} & {\displaystyle \frac{x\cosh x-\sinh x}{1-x}}\vphantom{{\displaystyle \frac{\frac{dy}{dx}}{\frac{dy}{dx}}}}\\
{\displaystyle \sinh x} & \cosh x & {\displaystyle \frac{1-\cosh x+x\sinh x}{1-x}}\vphantom{{\displaystyle \frac{\frac{dy}{dx}}{\frac{dy}{dx}}}}\\
0 & 0 & {\displaystyle \frac{1}{1-x}}\vphantom{{\displaystyle \frac{\frac{dy}{dx}}{\frac{dy}{dx}}}}
\end{bmatrix}.
\]
Adding $\cosh x$ to the $(1,3)$ entry of $(\hat{A})^{-1}$ and simplifying
yields $e^{-x}/(1-x)$, as expected.

The same generating function can be obtained by considering paths
from $1$ to $2$ in the directed graph in Figure \ref{f-cdesar},
which give the descent compositions of complements of desarrangements
(except for the empty permutation), and doing so is slightly easier
as we would only need to work with 2-by-2 matrices as opposed to 3-by-3
ones. 
\begin{figure}
\begin{center}
\begin{tikzpicture}[->,>=stealth',shorten >=1pt,auto,node distance=3cm,   thick,main node/.style={circle,fill=blue!20,draw,font=\sffamily}]

\node[main node] (1) {1};
\node[main node] (2) [right of =1] {2};

\path[every node/.style={font=\sffamily\small}]     
(1) edge node {$\{2,4,6,\dots\}$} (2)
(2) edge [loop right] node {$\{1,2,3,\dots\}$} (2);

\end{tikzpicture}
\end{center}

\caption{\label{f-cdesar}Directed graph for complements of desarrangements.}
\end{figure}
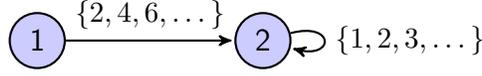

As a further example, let us rederive our formula in Theorem \ref{t-des}
for the generating function $D^{\des}(t,x)$. Notice that descents
of a permutation are precisely the (non-final) ascents of its complement,
and that each increasing run of length $k$ contributes $k-1$ such
ascents. So, let us consider paths from $1$ to $2$ in the directed
graph in Figure \ref{f-cdesar}, along with weights $w_{k}^{(1,2)}=t^{k-1}$
for each $k\in\{2,4,6,\dots\}$ and $w_{k}^{(2,2)}=t^{k-1}$ for each
$k\geq1$. Then 
\[
A=\begin{bmatrix}1 & {\displaystyle \frac{tx^{2}}{1-t^{2}x^{2}}}\vphantom{{\displaystyle \frac{\frac{dy}{dx}}{\frac{dy}{dx}}}}\\
0 & {\displaystyle 1+\frac{x}{1-tx}}\vphantom{{\displaystyle \frac{\frac{dy}{dx}}{\frac{dy}{dx}}}}
\end{bmatrix}^{-1}=\begin{bmatrix}1 & {\displaystyle \frac{-tx^{2}}{(1+tx)(1+x-tx)}}\vphantom{{\displaystyle \frac{\frac{dy}{dx}}{\frac{dy}{dx}}}}\\
0 & {\displaystyle \frac{1-tx}{1+x-tx}}\vphantom{{\displaystyle \frac{\frac{dy}{dx}}{\frac{dy}{dx}}}}
\end{bmatrix},
\]
and the corresponding matrix of exponential generating functions is
\[
\hat{A}=\begin{bmatrix}1 & {\displaystyle \frac{(1-t)e^{-tx}-te^{(t-1)x}+2t-1}{(1-t)(1-2t)}}\vphantom{{\displaystyle \frac{\frac{dy}{dx}}{\frac{dy}{dx}}}}\\
0 & {\displaystyle \frac{e^{(t-1)x}-t}{1-t}}\vphantom{{\displaystyle \frac{\frac{dy}{dx}}{\frac{dy}{dx}}}}
\end{bmatrix}.
\]
The inverse of $\hat{A}$ is 
\[
(\hat{A})^{-1}=\begin{bmatrix}1 & {\displaystyle \frac{1-2t-(1-t)e^{-tx}+te^{(t-1)x}}{(1-2t)(e^{(t-1)x}-t)}}\vphantom{{\displaystyle \frac{\frac{dy}{dx}}{\frac{dy}{dx}}}}\\
0 & {\displaystyle \frac{1-t}{e^{(t-1)x}-t}}\vphantom{{\displaystyle \frac{\frac{dy}{dx}}{\frac{dy}{dx}}}}
\end{bmatrix},
\]
and adding $1$ to the $(1,2)$ entry of $(\hat{A})^{-1}$ to account
for the empty permutation recovers Theorem \ref{t-des}.

The generalized run theorem can also be used to recover our formulas
for $D^{\pk}(t,x)$ and $D^{\val}(t,x)$ in Theorem \ref{t-pkval}.
In fact, in Section \ref{ss-joint}, we'll use the generalized run
theorem to prove a formula for the joint distribution of $\pk$ and
$\des$ over $\mathfrak{D}_{n}$.

\subsection{Double ascents and double descents}

Let us now use the generalized run theorem to derive the following
formulas for the generating functions $D^{\dasc}(t,x)$ and $D^{\ddes}(t,x)$.
\begin{thm}
\label{t-dascddes}Let $\alpha=\sqrt{(t+3)(t-1)}$. We have \leqnomode
\[
\tag{{a}}\qquad D^{\dasc}(t,x)=\frac{e^{-x}\left((2-t)\alpha\cosh(\frac{\alpha x}{2})-t(1-t)\sinh(\frac{\alpha x}{2})\right)+(1-t)\alpha e^{(1-t)x/2}}{(3-2t)\left(\alpha\cosh\left(\frac{\alpha x}{2}\right)-(1+t)\sinh\left(\frac{\alpha x}{2}\right)\right)}
\]
and 
\[
\tag{{b}}\qquad D^{\ddes}(t,x)=\frac{2t(1-t)\alpha\cosh\left(\frac{\alpha x}{2}\right)+2(1-2t+t^{3})\sinh\left(\frac{\alpha x}{2}\right)-\alpha e^{(1-3t)x/2}}{(2t(1-t)-1)\left(\alpha\cosh\left(\frac{\alpha x}{2}\right)-(1+t)\sinh\left(\frac{\alpha x}{2}\right)\right)}.
\]
\end{thm}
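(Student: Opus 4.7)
The plan is to apply the generalized run theorem (Theorem~\ref{t-runthm}) to the graph in Figure~\ref{f-desar}, computing $D^{\st}(t,x)$ as the EGF arising from suitable paths in the graph (for nondecreasing desarrangements) plus the EGF of decreasing desarrangements, which must be added by hand.

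For part (a), the key observation is that a run of length $k$ contributes exactly $\max(0,k-2)$ double ascents (the strictly interior positions of the run), independent of where the run sits in the permutation, so $\dasc$ is a per-run statistic tailor-made for the theorem. I would set $w_1^{(1,2)} = w_1^{(2,1)} = 1$, $w_k^{(2,3)} = t^{k-2}$ for $k \geq 2$, and $w_k^{(3,3)} = t^{\max(0,k-2)}$ for $k \geq 1$, giving edge generating functions $x$, $x$, $x^2/(1-tx)$, and $x + x^2/(1-tx)$ respectively. The resulting matrix $M$ is block-upper-triangular with $(3,3)$-entry $(1 + (1-t)x + (1-t)x^2)/(1-tx)$. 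The key algebraic fact is that the quadratic $1 + (1-t)x + (1-t)x^2$ factors as $(1 - \lambda_1 x)(1 - \lambda_2 x)$ with $\lambda_1, \lambda_2 = ((t-1) \mp \alpha)/2$, satisfying $\lambda_1 + \lambda_2 = t-1$, $\lambda_1 \lambda_2 = 1 - t$, and $\lambda_2 - \lambda_1 = \alpha$. Partial-fraction expansion and EGF conversion then transform $1/((1-\lambda_1 x)(1-\lambda_2 x))$ into $\alpha^{-1} e^{(t-1)x/2}[(t-1)\sinh(\alpha x/2) + \alpha\cosh(\alpha x/2)]$, and similar expressions arise for the other rational entries of $A$. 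Inverting $\hat{A}$ via its block-upper-triangular structure (the easy top-left block inverse is $\bigl[\begin{smallmatrix}\cosh x & \sinh x \\ \sinh x & \cosh x\end{smallmatrix}\bigr]$), extracting the $(1,3)$-entry, and adding $\cosh x$ for the decreasing desarrangements (all with $\dasc = 0$) should give the stated formula.

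Part (b) is more delicate because $\ddes$ counts singletons only at \emph{interior} positions $L_2, \ldots, L_{m-1}$ of the descent composition. The first part is automatic since desarrangements of length $\geq 2$ force $L_1 = 1$ via the $(1,2)$ edge, which I keep unweighted. The last part requires refining Figure~\ref{f-desar}: I would add an end vertex $4$ together with edges $(2,4)$ and $(3,4)$ absorbing the final run, each weighted $1$ on all sizes, while the $(3,3)$ loop carries weight $t$ on singletons and $1$ on longer runs; to capture the pairs of interior singletons produced during each $(2,1)(1,2)$ return trip, I would weight $(2,1)$ as $t^2$ and keep $(1,2)$ at weight $1$. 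Running the generalized run theorem on this four-vertex graph, reading the $(1,4)$-entry of $(\hat{A})^{-1}$, and adding the decreasing-desarrangement contribution $1 + t^{-2}(\cosh(tx) - 1)$ (since a decreasing permutation of even length $n$ has $\ddes = \max(0, n-2)$) should yield the stated formula. Here the relevant quadratic becomes $1 + (t-1)x + (1-t)x^2$, factoring via $\mu_1, \mu_2 = ((1-t) \mp \alpha)/2$, leading to an exponential factor $e^{(1-t)x/2}$ in place of $e^{(t-1)x/2}$.

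The main obstacle is the algebraic endgame: the matrix inversions at both the OGF and EGF stages (a $3\times 3$ for part (a), a $4\times 4$ for part (b)), the partial-fraction conversion of rational OGFs to EGFs using the above quadratic factorizations, and the final repackaging of linear combinations of $e^{\lambda_i x}$ into the compact hyperbolic forms stated in the theorem via the identities $\lambda_1 + \lambda_2 = \pm(t-1)$, $\lambda_1\lambda_2 = \pm(1-t)$, and $\lambda_2 - \lambda_1 = \alpha$. Part (b) requires more bookkeeping because of the enlarged graph and the supplementary $\cosh(tx)$ correction from decreasing desarrangements, which introduces additional exponentials into the final formula.
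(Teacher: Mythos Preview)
Your plan for part (a) is exactly what the paper does: the same graph (Figure~\ref{f-desar}), the same weights, and the same add-$\cosh x$ correction for decreasing desarrangements.

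For part (b), however, you and the paper diverge. You propose to attack $\ddes$ head-on by enlarging Figure~\ref{f-desar} to a four-vertex graph with a terminal state, weighting interior singletons, and then correcting with $1+t^{-2}(\cosh(tx)-1)$ for the decreasing desarrangements. This is valid\textemdash your graph does satisfy the uniqueness hypothesis of Theorem~\ref{t-runthm}, and your weights correctly pick out interior short runs\textemdash but it is substantially heavier than necessary. The paper instead observes that $\ddes(\pi)=\dasc(\pi^{c})$, so it suffices to count \emph{complements} of desarrangements by $\dasc$. This reduces part (b) to the two-vertex graph of Figure~\ref{f-cdesar} with the same per-run weighting scheme as in part (a): $w_{k}^{(1,2)}=t^{k-2}$ for even $k\ge2$, $w_{1}^{(2,2)}=1$, and $w_{k}^{(2,2)}=t^{k-2}$ for $k\ge2$. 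The only correction needed is $+1$ for the empty permutation, since the complement of every nonempty decreasing desarrangement (an increasing permutation of even length) is already captured by the $(1,2)$ edge. Thus the paper's quadratic is the same $1+(1-t)x+(1-t)x^{2}$ as in part (a), not your $1+(t-1)x+(1-t)x^{2}$, and the whole computation stays at the $2\times2$ level. Your route would reach the same destination, but the complementation trick buys a dramatically shorter calculation and avoids the Laurent-in-$t$ correction term.
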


See Tables \ref{tb-dasc}\textendash \ref{tb-ddes} for the first
ten polynomials $D_{n}^{\dasc}(t)$ and $D_{n}^{\ddes}(t)$, respectively.
\begin{table}
\begin{centering}
\begin{tabular}{|c|c|c|c|c|}
\cline{1-2} \cline{2-2} \cline{4-5} \cline{5-5} 
$n$ & $D_{n}^{\dasc}(t)$ & \quad{} & $n$ & $D_{n}^{\dasc}(t)$\tabularnewline
\cline{1-2} \cline{2-2} \cline{4-5} \cline{5-5} 
$0$ & $1$ &  & $5$ & $29+11t+4t^{2}$\tabularnewline
\cline{1-2} \cline{2-2} \cline{4-5} \cline{5-5} 
$1$ & $0$ &  & $6$ & $130+111t+19t^{2}+5t^{3}$\tabularnewline
\cline{1-2} \cline{2-2} \cline{4-5} \cline{5-5} 
$2$ & $1$ &  & $7$ & $798+705t+316t^{2}+29t^{3}+6t^{4}$\tabularnewline
\cline{1-2} \cline{2-2} \cline{4-5} \cline{5-5} 
$3$ & $2$ &  & $8$ & $5125+6242t+2626t^{2}+792t^{3}+41t^{4}+7t^{5}$\tabularnewline
\cline{1-2} \cline{2-2} \cline{4-5} \cline{5-5} 
$4$ & $6+3t$ &  & $9$ & $38726+52830t+31794t^{2}+8220t^{3}+1863t^{4}+55t^{5}+8t^{6}$\tabularnewline
\cline{1-2} \cline{2-2} \cline{4-5} \cline{5-5} 
\end{tabular}
\par\end{centering}
\caption{\label{tb-dasc}Distribution of $\protect\dasc$ over $\mathfrak{D}_{n}$
for $0\protect\leq n\protect\leq9$.}
\end{table}
\begin{table}
\begin{centering}
\begin{tabular}{|c|c|c|c|c|}
\cline{1-2} \cline{2-2} \cline{4-5} \cline{5-5} 
$n$ & $D_{n}^{\ddes}(t)$ & \quad{} & $n$ & $D_{n}^{\ddes}(t)$\tabularnewline
\cline{1-2} \cline{2-2} \cline{4-5} \cline{5-5} 
$0$ & $1$ &  & $5$ & $31+9t+4t^{2}$\tabularnewline
\cline{1-2} \cline{2-2} \cline{4-5} \cline{5-5} 
$1$ & $0$ &  & $6$ & $160+66t+38t^{2}+t^{4}$\tabularnewline
\cline{1-2} \cline{2-2} \cline{4-5} \cline{5-5} 
$2$ & $1$ &  & $7$ & $910+622t+262t^{2}+54t^{3}+6t^{4}$\tabularnewline
\cline{1-2} \cline{2-2} \cline{4-5} \cline{5-5} 
$3$ & $2$ &  & $8$ & $6077+5254t+2781t^{2}+576t^{3}+144t^{4}+t^{6}$\tabularnewline
\cline{1-2} \cline{2-2} \cline{4-5} \cline{5-5} 
$4$ & $8+t^{2}$ &  & $9$ & $45026+49708t+27682t^{2}+9264t^{3}+1565t^{4}+243t^{5}+8t^{6}$\tabularnewline
\cline{1-2} \cline{2-2} \cline{4-5} \cline{5-5} 
\end{tabular}
\par\end{centering}
\caption{\label{tb-ddes}Distribution of $\protect\ddes$ over $\mathfrak{D}_{n}$
for $0\protect\leq n\protect\leq9$.}
\end{table}

\begin{proof}
For part (a), observe that each increasing run of length $k\geq2$
contributes $k-2$ double ascents. So, we take $G$ to be the graph
in Figure \ref{f-desar}, and we set 
\[
w_{1}^{(1,2)}=w_{1}^{(2,1)}=w_{1}^{(3,3)}=1\quad\text{and}\quad w_{k}^{(2,3)}=w_{k}^{(3,3)}=t^{k-2}\text{ for all }k\geq2.
\]
Then 
\begin{align*}
A & =\begin{bmatrix}1 & x & 0\\
x & 1 & {\displaystyle \frac{x^{2}}{1-tx}}\vphantom{{\displaystyle \frac{\frac{dy}{dx}}{\frac{dy}{dx}}}}\\
0 & 0 & {\displaystyle 1+x+\frac{x^{2}}{1-tx}}\vphantom{{\displaystyle \frac{\frac{dy}{dx}}{\frac{dy}{dx}}}}
\end{bmatrix}^{-1}=\begin{bmatrix}{\displaystyle \frac{1}{1-x^{2}}} & {\displaystyle \frac{-x}{1-x^{2}}} & {\displaystyle \frac{x^{3}}{(1-x)(1+x)(1+x(1+x)(1-t))}}\vphantom{{\displaystyle \frac{\frac{dy}{dx}}{\frac{dy}{dx}}}}\\
{\displaystyle \frac{-x}{1-x^{2}}} & {\displaystyle \frac{1}{1-x^{2}}} & {\displaystyle \frac{-x^{2}}{(1-x)(1+x)(1+x(1+x)(1-t))}}\vphantom{{\displaystyle \frac{\frac{dy}{dx}}{\frac{dy}{dx}}}}\\
0 & 0 & {\displaystyle \frac{1-tx}{1+x(1+x)(1-t)}}\vphantom{{\displaystyle \frac{\frac{dy}{dx}}{\frac{dy}{dx}}}}
\end{bmatrix},
\end{align*}
and converting to exponential generating functions gives 
\[
\begin{bmatrix}\cosh x & -\sinh x & {\displaystyle \frac{e^{(t-1)x/2}((1-t)\cosh(\frac{\alpha x}{2})+\alpha^{-1}(t^{2}-3)\sinh(\frac{\alpha x}{2}))}{3-2t}+\frac{e^{x}}{2(3-2t)}-\frac{e^{-x}}{2}}\vphantom{{\displaystyle \frac{\frac{dy}{dx}}{\frac{dy}{dx}}}}\\
-\sinh x & \cosh x & {\displaystyle \frac{e^{(t-1)x/2}\left((3+t)(2-t)\cosh(\frac{\alpha x}{2})+\alpha t\sinh(\frac{\alpha x}{2})\right)}{(3+t)(3-2t)}-\frac{e^{x}}{2(3-2t)}-\frac{e^{-x}}{2}}\vphantom{{\displaystyle \frac{\frac{dy}{dx}}{\frac{dy}{dx}}}}\\
0 & 0 & e^{(t-1)x/2}\left(\cosh(\frac{\alpha x}{2})-\alpha^{-1}(1+t)\sinh(\frac{\alpha x}{2})\right)
\end{bmatrix}
\]
for the matrix $\hat{A}$. Then we invert $\hat{A}$, take the $(1,3)$
entry of $(\hat{A})^{-1}$, and add $\cosh(x)$ to account for the
decreasing desarrangements (which have no double ascents) to obtain
our desired formula.

For (b), it suffices to count complements of desarrangements by double
ascents, so we take $G$ to be the graph in Figure \ref{f-cdesar}
and set 
\[
w_{k}^{(1,2)}=t^{k-2}\text{ for all even }k\geq2,\quad w_{1}^{(2,2)}=1,\quad\text{and }w_{k}^{(2,2)}=t^{k-2}\text{ for all }k\geq2.
\]
Thus, we have 
\[
A=\begin{bmatrix}1 & {\displaystyle \frac{x^{2}}{1-t^{2}x^{2}}}\vphantom{{\displaystyle \frac{\frac{dy}{dx}}{\frac{dy}{dx}}}}\\
0 & 1+x+{\displaystyle \frac{x^{2}}{1-tx}}\vphantom{{\displaystyle \frac{\frac{dy}{dx}}{\frac{dy}{dx}}}}
\end{bmatrix}^{-1}=\begin{bmatrix}1 & {\displaystyle \frac{-x^{2}}{(1+tx)(1+x(1+x)(1-t))}}\vphantom{{\displaystyle \frac{\frac{dy}{dx}}{\frac{dy}{dx}}}}\\
0 & {\displaystyle \frac{1-tx}{1+x(1+x)(1-t)}}\vphantom{{\displaystyle \frac{\frac{dy}{dx}}{\frac{dy}{dx}}}}
\end{bmatrix}
\]
and we obtain the desired formula by adding 1 to the $(1,2)$ entry
of $(\hat{A})^{-1}$; the computations are similar to the previous
ones and so we omit the remaining steps.
\end{proof}

\subsection{\label{ss-joint}Two joint distributions}

The run theorem can also be used to derive generating functions encoding
the joint distribution of multiple statistics, and we shall showcase
two examples. For our proofs in this section, we will only provide
the graph and the weights, omitting the details due to their similarity
to prior computations.
\begin{thm}
We have 
\begin{align*}
D^{(\pk,\des)}(s,t,x) & \coloneqq\sum_{n=0}^{\infty}\sum_{\pi\in\mathfrak{D}_{n}}s^{\pk(\pi)}t^{\des(\pi)}\frac{x^{n}}{n!}\\
 & =\frac{\mu(3-s-2t)\cosh\left(\frac{\mu x}{2}\right)-(1-s+(3-s)t-2t^{2})\sinh\left(\frac{\mu x}{2}\right)-\mu e^{(1-3t)x/2}}{(2-s-2t)\left(\mu\cosh\left(\frac{\mu x}{2}\right)-(1+t)\sinh\left(\frac{\mu x}{2}\right)\right)}
\end{align*}
where $\mu=\sqrt{1+2t(1-2s)+t^{2}}$.
\end{thm}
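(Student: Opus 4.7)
The plan is to apply the generalized run theorem (Theorem \ref{t-runthm}) with a four-state augmentation of the desarrangement graph from Figure \ref{f-desar}. Since $\pk(\pi)$ counts the long (length $\geq 2$) increasing runs of $\pi$ other than the final one, we must distinguish the final edge of each path from the non-final ones, which requires introducing a new terminal state $4$: we split each of the continuing edges $(2,3)$ and $(3,3)$ of Figure \ref{f-desar} into two copies---a non-final copy landing in the original intermediate state and a terminal copy landing in state $4$. Concretely, take the directed graph on $\{1,2,3,4\}$ with $(1,2)$ and $(2,1)$ carrying $\{1\}$; $(2,3)$ and $(2,4)$ carrying $\{2,3,4,\dots\}$; and $(3,3)$ and $(3,4)$ carrying $\{1,2,3,\dots\}$. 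Paths from $1$ to $4$ correspond precisely to the descent compositions of nondecreasing desarrangements.

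Assign weight $t$ to each non-terminal short-run edge, weight $st$ to each non-terminal long-run edge (of any allowed length), and weight $1$ to each terminal edge; then $w^{(1,4)}(\Comp\pi)=t^{\des(\pi)}s^{\pk(\pi)}$, since each of the $\des(\pi)=k-1$ non-final runs contributes $t$ and each of the $\pk(\pi)$ long non-final runs contributes an extra $s$. Computing $\mathbf{I}+M$, the crucial diagonal entry at state $3$ simplifies to $(1-(1-t)x+(s-1)tx^{2})/(1-x)$, whose numerator has discriminant $(1+t)^{2}-4st=\mu^{2}$ and therefore factors as $(1-\alpha x)(1-\beta x)$ with $\alpha+\beta=1-t$ and $\alpha-\beta=\mu$. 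Via partial fractions and the hat map,
\[
\hat{A}_{3,3}=\frac{e^{(1-t)x/2}\bigl[\mu\cosh(\mu x/2)-(1+t)\sinh(\mu x/2)\bigr]}{\mu},
\]
and its reciprocal becomes a denominator in the $(1,4)$ entry of $(\hat{A})^{-1}$, accounting for the factor $\mu\cosh(\mu x/2)-(1+t)\sinh(\mu x/2)$ in the denominator of the claimed formula. Meanwhile, the top-left $2\times 2$ block of $\hat{A}$ equals $\left(\begin{smallmatrix}\cosh x & -\sinh x\\-\sinh x & \cosh x\end{smallmatrix}\right)$, which inverts to $\left(\begin{smallmatrix}\cosh x & \sinh x\\ \sinh x & \cosh x\end{smallmatrix}\right)$ using $\cosh^{2}-\sinh^{2}=1$.

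The main obstacle is the remaining bookkeeping: assembling $(\hat{A})^{-1}_{1,4}$ via the block-inverse formula requires each of $\hat{A}_{1,3}$, $\hat{A}_{1,4}$, $\hat{A}_{2,3}$, $\hat{A}_{2,4}$, and $\hat{A}_{3,4}$ in closed form, computed via partial fractions of the rational entries of $A=(\mathbf{I}+M)^{-1}$. The exponential $e^{(1-3t)x/2}$ in the numerator of the theorem is expected to emerge from the interaction of the $e^{-(1-t)x/2}$ factor in $1/\hat{A}_{3,3}$ with the $e^{-tx}$ pieces arising from the partial-fraction expansion of $1/(1-t^{2}x^{2})$. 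Finally, decreasing desarrangements never reach state $4$ (their paths cycle between states $1$ and $2$), so their contribution $(\cosh(tx)-1)/t$ must be added in separately along with the constant $1$ for the empty permutation; after cancellation involving the $\cosh x$ and $\sinh x$ terms, the result simplifies to the stated formula. A natural sanity check is the specialization $s=1$: then $\mu$ reduces to $1-t$ and the formula should collapse to $D^{\des}(t,x)$ of Theorem \ref{t-des}.
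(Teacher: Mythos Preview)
Your approach is sound and will lead to the correct formula, but it takes a substantially heavier route than the paper. The paper's key observation is to pass to complements: since $\pk(\pi)=\val(\pi^{c})$ and $\des(\pi)$ equals the number of non-final ascents of $\pi^{c}$, one can work directly with the two-state graph of Figure~\ref{f-cdesar}. Valleys are precisely the \emph{non-initial} long runs, so no terminal state is needed---the initial/non-initial distinction is already built into the edges $(1,2)$ versus $(2,2)$. With weights $w_{k}^{(1,2)}=t^{k-1}$ for even $k$ and $w_{1}^{(2,2)}=1$, $w_{k}^{(2,2)}=st^{k-1}$ for $k\ge2$, the $(1,2)$ entry of $(\hat{A})^{-1}$ (plus $1$ for the empty permutation) gives the answer in one stroke; the decreasing desarrangements are captured automatically since their complements are single even-length increasing runs. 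Your four-state construction, by contrast, forces you to track the final run separately, work with $4\times4$ matrices, and add in both the empty permutation and the decreasing desarrangements by hand.

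Two small corrections to your sketch. First, with weights $w_{1}^{(1,2)}=w_{1}^{(2,1)}=t$, the top-left block of $\hat{A}$ is
\[
\begin{pmatrix}\cosh(tx)&-\sinh(tx)\\-\sinh(tx)&\cosh(tx)\end{pmatrix},
\]
not $\cosh x,\sinh x$; you in fact use this correctly later when you refer to the $e^{-tx}$ pieces from $1/(1-t^{2}x^{2})$. Second, your heuristic for the origin of $e^{(1-3t)x/2}$ does not quite track: the product $e^{-(1-t)x/2}\cdot e^{-tx}$ gives $e^{-(1+t)x/2}$, not $e^{(1-3t)x/2}$. The exponential will still emerge after the full block-inverse computation and simplification, but the mechanism is more intricate than the single pairing you describe.
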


As expected, setting $s=1$ recovers Theorem \ref{t-des}, whereas
setting $t=1$ and then $s=t$ recovers Theorem \ref{t-pkval} (a).

\begin{proof}
We shall work with valleys and ascents in complements of desarrangements,
so take $G$ to be the directed graph in Figure \ref{f-cdesar}. Since
the number of valleys of a permutation is equal to its number of non-initial
long runs, we shall assign a weight of $s$ to every non-initial long
run, as well as a weight of $t^{k-1}$ to every run of length $k\geq1$.
As such, applying the generalized run theorem with the weights
\[
w_{k}^{(1,2)}=t^{k-1}\text{ for all even }k\geq2,\quad w_{1}^{(2,2)}=1,\quad\text{and }w_{k}^{(2,2)}=st^{k-1}\text{ for all }k\geq2
\]
yields the desired formula.
\end{proof}

Recall from Section \ref{ss-pkval} the notion of pixed points, and
let $\pix(\pi)$ denote the number of pixed points of $\pi$. One
sees directly that $\pix$ is equidistributed with $\fix$\textemdash the
number of fixed points\textemdash over $\mathfrak{S}_{n}$, and several
authors have studied joint distributions of $\pix$ along with other
statistics \cite{Burstein2015,Foata2008a,Foata2008,Lin2013}. The
next result appears to be new.
\begin{thm}
We have 
\begin{multline*}
P^{(\pix,\des)}(s,t,x)\coloneqq\sum_{n=0}^{\infty}\sum_{\pi\in\mathfrak{S}_{n}}s^{\pix(\pi)}t^{\des(\pi)}\frac{x^{n}}{n!}\\
=\frac{(1-t)\left((1-s)(1-s-t)te^{(s-t)x}+(1-t)\left((1-s)(s-t)e^{(s+t-1)x}-t(1-2t)\right)\right)}{(1-2t)(1-s-t)(s-t)\left(e^{(t-1)x}-t\right)}.
\end{multline*}
\end{thm}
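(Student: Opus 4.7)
The plan is to mimic the proof of Theorem~\ref{t-des}: derive a first-order linear ODE in $x$ for $P \coloneqq P^{(\pix,\des)}(s,t,x)$ by decomposing each nonempty $\pi \in \mathfrak{S}_{m+1}$ as $\pi = \sigma\, n\, \rho$, where $n = m+1$ is the largest letter and $\sigma$, $\rho$ are the subwords to its left and right. The descent statistic decomposes just as in the proof of Theorem~\ref{t-des}: $\des(\pi) = \des(\sigma) + \des(\rho) + \mathbb{1}[\rho \neq \emptyset]$.

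The new ingredient is a case analysis for $\pix(\pi)$ that exploits the uniqueness of the pixed factorization. I claim that $\pix(\pi) = \pix(\sigma)$ whenever $\sigma$ is not an increasing permutation; that $\pix(\pi) = |\sigma|+1$ when $\sigma$ is increasing and $\rho$ is a desarrangement (possibly empty); and that $\pix(\pi) = |\sigma|$ when $\sigma$ is increasing and $\rho$ is not a desarrangement. The two structural facts needed are (i) appending the maximum letter $n$ to the end of a nonempty desarrangement $\delta$ preserves the desarrangement property, since the first-ascent position of $\delta$ is unchanged by the new (trivially ascending) final comparison; and (ii) for nonempty $\rho$, the concatenation $n\rho$ is a desarrangement if and only if $\rho$ is not one, because prepending $n$ shifts the first-ascent position by exactly one. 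Summing the resulting case contributions as exponential generating functions produces the ODE
\[
\frac{\partial P}{\partial x} \;=\; (s-1)\bigl(1 - t + t D\bigr)\, e^{sx} \;+\; \bigl(1 - t + t A\bigr)\, P,
\]
where $D \coloneqq D^{\des}(t,x)$ and $A \coloneqq A(t,x)$ is the Eulerian generating function from~(\ref{e-Euleriangf}).

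To solve the ODE, I take the integrating factor $\mu(x) \coloneqq e^{(t-1)x} - t$; a short computation using the formula for $A$ gives $\mu' = -\mu\,(1 - t + tA)$, so the equation reduces to $(\mu P)' = (s-1)(1 - t + tD)\, \mu\, e^{sx}$. Substituting the formula for $D$ from Theorem~\ref{t-des} causes the right-hand side to collapse to a $\mathbb{Q}(s,t)$-linear combination of $e^{(s+t-1)x}$ and $e^{(s-t)x}$, which integrates term by term; the initial condition $P(s,t,0) = 1$ fixes the constant of integration, after which division by $\mu$ and simplification yield the stated formula. The principal obstacle is the case analysis: because $\pix$ is defined globally through the pixed factorization rather than locally, each case requires verifying that a specific candidate pair $(\iota, \delta)$---for instance $(\iota_\sigma,\, \delta_\sigma\, n\, \rho)$ when $\sigma$ is not increasing---really is the unique pixed factorization of $\pi$, which boils down to checking that the corresponding $\delta$ is a desarrangement.
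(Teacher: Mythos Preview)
Your argument is correct, and it takes a genuinely different route from the paper. The paper does \emph{not} set up a differential equation here; instead it applies the generalized run theorem (Theorem~\ref{t-runthm}) to the two-vertex directed graph of Figure~\ref{f-pix}, passes to complements so that the pixed prefix becomes an initial string of short runs, and chooses weights $w_{1}^{(1,1)}=s$, $w_{k}^{(1,2)}=t^{k-1}$ or $st^{k-1}$ according to the parity of $k$, and $w_{k}^{(2,2)}=t^{k-1}$, then reads off the answer from $(\hat{A})^{-1}$. By contrast, you decompose $\pi=\sigma\,n\,\rho$ around the maximum letter and carry out a careful case analysis of how $\pix$ interacts with this decomposition (using uniqueness of the pixed factorization to identify $\pix(\pi)$ in each case), arriving at the first-order linear ODE $\partial_{x}P=(s-1)(1-t+tD)e^{sx}+(1-t+tA)P$; the integrating factor $\mu=e^{(t-1)x}-t$ then reduces the problem to integrating a combination of $e^{(s+t-1)x}$ and $e^{(s-t)x}$. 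Your method is more elementary and self-contained\textemdash it reuses only Theorem~\ref{t-des} and the Eulerian generating function~(\ref{e-Euleriangf}), and stays in the same spirit as the proof of Theorem~\ref{t-des}\textemdash whereas the paper's run-theorem approach, while heavier, is more mechanical and generalizes uniformly to other run-based statistics without requiring a new case analysis each time.
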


Setting $s=1$ recovers the exponential generating function (\ref{e-Euleriangf})
for the Eulerian polynomials, setting $s=0$ recovers Theorem \ref{t-des},
and taking the limit as $t\rightarrow1$ recovers the well-known exponential
generating function $e^{(s-1)x}/(1-x)$ for the distribution of $\fix$
over $\mathfrak{S}_{n}$ \cite[A008290]{oeis}.
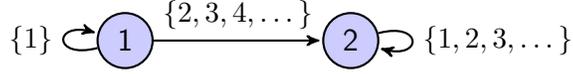
\begin{figure}
\begin{center}
\begin{tikzpicture}[->,>=stealth',shorten >=1pt,auto,node distance=3cm,   thick,main node/.style={circle,fill=blue!20,draw,font=\sffamily}]

\node[main node] (1) {1};
\node[main node] (2) [right of =1] {2};

\path[every node/.style={font=\sffamily\small}]
(1) edge [loop left] node {$\{1\}$} (1)   
(1) edge node {$\{2,3,4,\dots\}$} (2)
(2) edge [loop right] node {$\{1,2,3,\dots\}$} (2);

\end{tikzpicture}
\end{center}

\caption{\label{f-pix}Directed graph for counting permutations by pixed points.}
\end{figure}

\begin{proof}
Take $G$ to be the directed graph in Figure \ref{f-pix}. Note that
the descent composition of any permutation is either $(1,1)$-admissible
or $(1,2)$-admissible, depending on whether the permutation has a
long run. If we let $\pi=\iota\delta$ be the pixed factorization
of $\pi$, then $\pi^{c}=\tilde{\iota}\tilde{\delta}$ where $\tilde{\iota}$
is a decreasing sequence and $\tilde{\delta}$ is the complement of
a desarrangement, so we wish to weight each letter in $\tilde{\iota}$
by $s$ and each ascent of $\pi^{c}$ by $t$. If $\pi_{1}^{c}\pi_{2}^{c}\cdots\pi_{j}^{c}$
is the maximal decreasing prefix of $\pi^{c}$, then $\tilde{\iota}=\pi_{1}^{c}\pi_{2}^{c}\cdots\pi_{j}^{c}$
if $\pi_{j+1}^{c}\pi_{j+2}^{c}\cdots\pi_{n}^{c}$ is the complement
of a desarrangement, and $\tilde{\iota}=\pi_{1}^{c}\pi_{2}^{c}\cdots\pi_{j-1}^{c}$
otherwise; the latter holds precisely when the first long run of $\pi^{c}$
is odd. Thus, we apply the generalized run theorem with the weights
\[
w_{1}^{(1,1)}=s,\quad w_{k}^{(1,2)}=\begin{cases}
t^{k-1}, & \text{for all even }k\geq2,\\
st^{k-1}, & \text{for all odd }k\geq3,
\end{cases}\quad\text{and }w_{k}^{(2,2)}=t^{k-1}\text{ for all }k\geq1,
\]
adding up the $(1,1)$ and $(1,2)$ entries of $(\hat{A})^{-1}$.
\end{proof}

The same approach can be used to determine the joint distribution
of $\pix$ with the other statistics considered earlier, but we will
not do this here.

\section{\label{s-pa}Pattern avoidance in desarrangements}

We now turn our attention to the study of pattern avoidance in desarrangements.
Given a set $\Pi$ of patterns, recall that $\mathfrak{D}_{n}(\Pi)$
denotes the set of permutations in $\mathfrak{D}_{n}$ avoiding all
patterns in $\Pi$, and that $d_{n}(\Pi)=\left|\mathfrak{D}_{n}(\Pi)\right|$.
We refer to the $\mathfrak{D}_{n}(\Pi)$ as \textit{desarrangement
avoidance classes}. In this section, we will determine $d_{n}(\Pi)$
for all pattern sets $\Pi\subseteq\mathfrak{S}_{3}$ using a mixture
of combinatorial and generating function techniques. See Table \ref{tb-pa}
for a summary of our results.
\begin{table}
\begin{centering}
\renewcommand{\arraystretch}{1.4}%
\begin{tabular}{|>{\raggedright}p{1.1in}|>{\raggedright}m{2.25in}|>{\raggedright}p{1in}|>{\raggedright}p{0.7in}|}
\hline 
$\Pi$ & $d_{n}(\Pi)$ & Reference & OEIS \cite{oeis}\tabularnewline
\hline 
$\{321\}$ & $C_{n-1}$ (Catalan numbers) & Theorem \ref{t-321} & A000108\tabularnewline
\hline 
$\{132\}$ & \multirow{3}{2.25in}{$F_{n+1}$ (Fine numbers)} & \multirow{2}{1in}{Theorem \ref{t-Fine1}} & \multirow{3}{0.7in}{A000957}\tabularnewline
\cline{1-1} 
$\{231\}$ &  &  & \tabularnewline
\cline{1-1} \cline{3-3} 
$\{123\}$ &  & Theorem \ref{t-Fine2} & \tabularnewline
\hline 
$\{213\}$ & \multirow{2}{2.25in}{${\displaystyle \sum_{k=1}^{n-1}(-1)^{n-k-1}C_{k}}$} & \multirow{2}{1in}{Theorem \ref{t-213-312}} & \multirow{2}{0.7in}{A033297}\tabularnewline
\cline{1-1} 
$\{312\}$ &  &  & \tabularnewline
\hline 
$\{132,321\}$ & \multirow{2}{2.25in}{$n-1$} & \multirow{2}{1in}{Theorem \ref{t-132+321}} & \multirow{2}{0.7in}{A000027}\tabularnewline
\cline{1-1} 
$\{132,231,321\}$ &  &  & \tabularnewline
\hline 
$\{132,231\}$ & \multirow{2}{2.25in}{$2^{n-2}$} & Theorem \ref{t-132+231} & \multirow{3}{0.7in}{A000079}\tabularnewline
\cline{1-1} \cline{3-3} 
$\{231,321\}$ &  & Theorem \ref{t-321+231} & \tabularnewline
\cline{1-3} \cline{2-3} \cline{3-3} 
$\{312,321\}$ & $2^{n-3}$ & Theorem \ref{t-312+321} & \tabularnewline
\hline 
$\{123,213\}$ & \multirow{5}{2.25in}{$J_{n-1}$ (Jacobsthal numbers)} & \multirow{3}{1in}{Theorem \ref{t-213+friends}} & \multirow{5}{0.7in}{A001045}\tabularnewline
\cline{1-1} 
$\{132,213\}$ &  &  & \tabularnewline
\cline{1-1} 
$\{213,231\}$ &  &  & \tabularnewline
\cline{1-1} \cline{3-3} 
$\{132,312\}$ &  & \multirow{2}{1in}{Theorem \ref{t-312+friends}} & \tabularnewline
\cline{1-1} 
$\{231,312\}$ &  &  & \tabularnewline
\hline 
$\{123,132\}$$\vphantom{{\displaystyle \frac{\frac{b}{b}}{\frac{b}{b}}}}$ & ${\displaystyle \frac{2^{n+1}+(7-3n)(-1)^{n}}{9}}$ & Theorem \ref{t-123+132} & A113954\tabularnewline
\hline 
$\{123,231\}$$\vphantom{{\displaystyle \frac{\frac{b}{b}}{\frac{b}{b}}}}$ & ${\displaystyle \frac{2n(n-1)+(5-2n)(-1)^{n}+3}{8}}$ & Theorem \ref{t-123+231} & A130404\tabularnewline
\hline 
$\{123,312\}$$\vphantom{{\displaystyle \frac{\frac{b}{b}}{\frac{b}{\frac{a}{b}}}}}$ & ${\displaystyle \left\lceil \frac{(n-1)^{2}}{4}\right\rceil }$ & Theorem \ref{t-123+312} & A004652\tabularnewline
\hline 
$\{123,132,231\}$$\vphantom{{\displaystyle \frac{\frac{\frac{b}{}}{b}}{\frac{b}{\frac{\frac{b}{}}{b}}}}}$ & $\begin{cases}
n-1, & \text{if }n\text{ is odd,}\\
1, & \text{if }n\text{ is even.}
\end{cases}$ & Theorem \ref{t-123+132+231} & A124625\tabularnewline
\hline 
$\{123,132,312\}$ & \multirow{5}{2.25in}{${\displaystyle \left\lfloor \frac{n}{2}\right\rfloor }$} & \multirow{2}{1in}{Theorem \ref{t-123+312+friends}} & \multirow{5}{0.7in}{A008619}\tabularnewline
\cline{1-1} 
$\{123,231,312\}$ &  &  & \tabularnewline
\cline{1-1} \cline{3-3} 
$\{123,213,231\}$ &  & \multirow{2}{1in}{Theorem \ref{t-213+231+friends}} & \tabularnewline
\cline{1-1} 
$\{132,213,231\}$ &  &  & \tabularnewline
\cline{1-1} \cline{3-3} 
$\{132,231,312\}$ &  & Theorem \ref{t-132+231+312} & \tabularnewline
\hline 
$\{123,132,213\}$ & \multirow{2}{2.25in}{$f_{n-1}$ (Fibonacci numbers)} & Theorem \ref{t-123+132+213} & \multirow{2}{0.7in}{A000045}\tabularnewline
\cline{1-1} \cline{3-3} 
$\{231,312,321\}$ &  & Theorem \ref{t-231+312+321} & \tabularnewline
\hline 
\end{tabular}
\par\end{centering}
\caption{\label{tb-pa}Summary of pattern avoidance results.}
\end{table}

Some of our proofs will use generating functions for the numbers $d_{n}(\Pi)$
and $s_{n}(\Pi)\coloneqq\left|\mathfrak{S}_{n}(\Pi)\right|$. To that
end, we define
\[
D(x;\Pi)\coloneqq\sum_{n=0}^{\infty}d_{n}(\Pi)x^{n}\quad\text{and}\quad S(x;\Pi)\coloneqq\sum_{n=0}^{\infty}s_{n}(\Pi)x^{n}.
\]

One remark is in order before we continue. In the permutation patterns
literature, two pattern sets $\Pi$ and $\Pi^{\prime}$ are said to
be \textit{Wilf equivalent} if $s_{n}(\Pi)=s_{n}(\Pi^{\prime})$ for
all $n\geq0$. Note that $\pi$ avoids a pattern $\sigma$ if and
only if its complement $\pi^{c}$ avoids the complement $\sigma^{c}$
of $\sigma$. Therefore, if $\Pi^{c}$ is the set of patterns obtained
by taking the complement of every pattern in $\Pi$, then $\Pi$ and
$\Pi^{c}$ are Wilf equivalent for all $\Pi$. The same is true for
other symmetries such as reversal, reverse-complementation, and the
inverse, and Wilf equivalences achieved via such symmetries are called
trivial. However, in the setting of desarrangements, all Wilf equivalences
are nontrivial because the property of being a desarrangement is not
invariant under any of these symmetries.

\subsection{Preliminary lemmas and trivial results}

Below is a preliminary lemma about pattern avoidance in desarrangements
which will prove to be helpful for what lies ahead.
\begin{lem}
\label{l-desav}Let $n\geq2$ and $\pi\in\mathfrak{D}_{n}$.
\begin{enumerate}
\item [\normalfont{(a)}] If $\pi$ avoids 213, then $\pi_{1}=n$.
\item [\normalfont{(b)}] If $\pi$ avoids 231, then the letter 1 is at
the first ascent of $\pi$.
\item [\normalfont{(c)}] If $\pi$ avoids 312, then $\pi_{1}=\pi_{2}+1$.
\item [\normalfont{(d)}] If $\pi$ avoids 321, then $\pi_{2}=1$.
\end{enumerate}
\end{lem}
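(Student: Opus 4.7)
The plan is to handle the four parts by a single unified strategy: the desarrangement condition forces $\pi_1 > \pi_2$ (since position $1$ is odd and hence cannot be the first ascent), and in some cases additional descents at the start; assuming the conclusion fails then locates a specific letter at a position $l$ sufficiently far to the right, whence combining $l$ with the first one or two positions exhibits a copy of the forbidden pattern.

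For part (a), if $n$ is not at position $1$, then $n = \pi_k$ for some $k$, and the inequality $\pi_2 < \pi_1 \leq n$ rules out $k = 2$, so $k \geq 3$. Then $\pi_1 \pi_2 \pi_k$ standardizes to $213$, a contradiction. For part (c), the desarrangement condition gives $\pi_1 \geq \pi_2 + 1$; if the inequality is strict, then the letter $\pi_2 + 1$ occurs at some position $l \geq 3$ (it equals neither $\pi_1$ nor $\pi_2$), and $\pi_1 \pi_2 \pi_l$ standardizes to $312$.

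Part (b) requires slightly more care. Let the first ascent of $\pi$ occur at an even position $j$, so that $\pi_1 > \pi_2 > \cdots > \pi_j < \pi_{j+1}$. The letter $1$ cannot lie in positions $1, \ldots, j-1$ (each entry there is strictly greater than $\pi_j \geq 1$), nor at position $j+1$ (since $\pi_{j+1} > \pi_j \geq 1$); hence $1$ is either at position $j$, which is the claim, or at a position $l \geq j+2$. In the latter case $\pi_j \pi_{j+1} \pi_l$ has $\pi_l = 1 < \pi_j < \pi_{j+1}$ and thus standardizes to $231$, a contradiction. For part (d), $321$-avoidance forces the first ascent to be at position $2$, because otherwise it would lie at an even position $\geq 4$ and then $\pi_1 \pi_2 \pi_3$ would be strictly decreasing and give a $321$ pattern; so $\pi_1 > \pi_2 < \pi_3$. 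If $1$ sits at some position $l \neq 2$, then $l = 1$ is ruled out by $\pi_1 > \pi_2 \geq 1$ and $l = 3$ by $\pi_2 < \pi_3$, so $l \geq 4$ and $\pi_1 \pi_2 \pi_l$ is a $321$ pattern.

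None of these arguments presents a serious obstacle; the only subtleties worth flagging are the separate dismissal of $l = j+1$ in (b) before invoking the $231$ contradiction, and the need in (d) to first pin the first ascent at position $2$ before locating the letter $1$.
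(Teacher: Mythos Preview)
Your proof is correct and follows essentially the same strategy as the paper: exploit $\pi_1>\pi_2$ (forced by the desarrangement condition) and then locate a specific letter whose position, combined with the first one or two entries, exhibits the forbidden pattern. The only minor omission is that in (b) your notation $\pi_{j+1}$ tacitly assumes $j<n$; the excluded case is the even-length decreasing permutation, where $\pi_n=1$ trivially sits at the first ascent $n$.
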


\begin{proof}
Any desarrangement $\pi$ has $\pi_{1}>\pi_{2}$ (or else $1$ would
be the first ascent of $\pi$), so $\pi_{2}\neq n$. If the letter
$n$ appears after $\pi_{2}$ in $\pi$, then $\pi_{1}\pi_{2}n$ would
be an occurrence of 213. Thus, any $\pi\in\mathfrak{D}_{n}(213)$
has $\pi_{1}=n$, and (a) is proven.

If $\pi$ avoids 231, then there cannot be an ascent before the letter
1 in $\pi$, and (b) follows.

If $\pi_{1}>\pi_{2}+1$, then the letter $\pi_{2}+1$ would appear
after $\pi_{2}$ in $\pi$, so $\pi_{1}\pi_{2}(\pi_{2}+1)$ would
be an occurrence of 312. Also, as observed above, no desarrangement
$\pi$ can have $\pi_{1}<\pi_{2}$. Thus, any $\pi\in\mathfrak{D}_{n}(312)$
has $\pi_{1}=\pi_{2}+1$, proving (c).

Finally, if 1 appears after $\pi_{2}$ in $\pi$, then $\pi_{1}\pi_{2}1$
would be an occurrence of 321 in $\pi$. Since $\pi_{1}=1$ would
mean that the first ascent of $\pi$ is 1, any $\pi\in\mathfrak{D}_{n}(321)$
must have $\pi_{2}=1$, which proves (d).
\end{proof}

We now state several results about desarrangement avoidance classes
that have at most one permutation per length $n$. Together with the
more interesting results presented in Table~\ref{tb-pa}, these complete
the enumeration of $\mathfrak{D}_{n}(\Pi)$ for all length 3 pattern
sets $\Pi\subseteq\mathfrak{S}_{3}$.

\begin{prop}
\label{p-trivf}If $\{123,321\}\subseteq\Pi$, then $d_{n}(\Pi)=0$
for all $n\geq5$.
\end{prop}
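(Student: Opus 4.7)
The plan is to invoke the Erdős–Szekeres theorem, which says that any sequence of $(r-1)(s-1)+1$ distinct real numbers contains either an increasing subsequence of length $r$ or a decreasing subsequence of length $s$. Specializing to $r = s = 3$, any permutation of length at least $5$ contains either an increasing subsequence of length $3$ (an occurrence of $123$) or a decreasing subsequence of length $3$ (an occurrence of $321$). Consequently, $\mathfrak{S}_n(\{123, 321\}) = \varnothing$ for all $n \geq 5$.

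Since $\{123, 321\} \subseteq \Pi$ implies $\mathfrak{D}_n(\Pi) \subseteq \mathfrak{S}_n(\Pi) \subseteq \mathfrak{S}_n(\{123, 321\})$, the result follows immediately: $d_n(\Pi) = 0$ for all $n \geq 5$. There is no real obstacle here; this is a direct consequence of a classical theorem, and the proposition is essentially a bookkeeping step that handles the ``trivial'' pattern sets containing both $123$ and $321$ before moving on to the more substantive enumerations in Table~\ref{tb-pa}. If desired, one can alternatively verify the statement by hand for $n = 5$ (checking that every one of the $120$ permutations contains $123$ or $321$, which is short to do by case analysis on the relative order of the first three letters) and then observe that containment is monotone in length, so the vanishing persists for all $n \geq 5$.
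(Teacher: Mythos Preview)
Your proof is correct and follows the same approach as the paper, which simply cites the Erd\H{o}s--Szekeres theorem as an immediate consequence. Your version spells out the argument in more detail, but the core idea is identical.
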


\begin{proof}
This is an immediate consequence of the Erd\H{o}s\textendash Szekeres
theorem \cite{Erdoes1935}.
\end{proof}

\begin{prop}
If $\{213,312\}\subseteq\Pi\subseteq\mathfrak{S}_{3}$, then $d_{n}(\Pi)=0$
for all odd $n$ and 
\[
d_{n}(\Pi)=\begin{cases}
1, & \text{if }321\notin\Pi,\\
0, & \text{if }321\in\Pi,
\end{cases}
\]
 for all even $n\geq4$.
\end{prop}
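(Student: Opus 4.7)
The plan is to show that any desarrangement avoiding both $213$ and $312$ must be the decreasing permutation $n(n-1)\cdots 1$, after which the rest of the statement follows immediately.

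First, I would invoke Lemma \ref{l-desav} to pin down the first two letters of any $\pi\in\mathfrak{D}_{n}(\Pi)$ with $n\geq 2$. Part (a) of the lemma forces $\pi_1=n$ (from avoiding $213$), and then part (c) forces $\pi_2=\pi_1-1=n-1$ (from avoiding $312$). So the first two letters of $\pi$ are determined.

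Next, I would show that the remaining letters $\pi_3,\pi_4,\dots,\pi_n$ appear in strictly decreasing order. Suppose for contradiction that $\pi_i<\pi_j$ for some $3\leq i<j\leq n$. Since $\pi_i,\pi_j\in[n-2]$ and $\pi_1=n$ is larger than both, the subsequence $\pi_1\pi_i\pi_j$ standardizes to $312$, contradicting $312$-avoidance. Hence $\pi_3>\pi_4>\cdots>\pi_n$, and since $\{\pi_3,\dots,\pi_n\}=[n-2]$, we conclude $\pi=n(n-1)(n-2)\cdots 1$.

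Finally, I would conclude by recalling (as noted in the introduction) that the decreasing permutation $n(n-1)\cdots 1$ is a desarrangement if and only if $n$ is even. Thus for odd $n\geq 3$ there are no desarrangements at all in $\mathfrak{D}_n(\Pi)$, giving $d_n(\Pi)=0$. For even $n\geq 4$, the unique candidate is the decreasing permutation, which avoids $123$, $132$, $231$ automatically (it is strictly decreasing) but contains $321$. Hence $d_n(\Pi)=1$ if $321\notin\Pi$ and $d_n(\Pi)=0$ if $321\in\Pi$, as claimed.

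There is no real obstacle here: the argument is a short forcing chain, with the heart of the matter being the single subsequence observation in the second step. The case $n=2$ is mild and falls outside the range stated in the proposition, so it need not be treated separately.
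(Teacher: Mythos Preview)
Your proof is correct and follows essentially the same route as the paper: both arguments use Lemma~\ref{l-desav}(a) to force $\pi_1=n$ and then conclude that $\pi$ must be the decreasing permutation, after which the parity and $321$ considerations finish the job. The only cosmetic difference is that the paper invokes the known structure $\pi=\tau n\rho$ (with $\tau$ increasing, $\rho$ decreasing) for $\mathfrak{S}_n(213,312)$ rather than your direct ``$\pi_1\pi_i\pi_j$ is a $312$'' argument, but this is the same observation in slightly different packaging.
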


\begin{proof}
Any permutation $\pi\in\mathfrak{S}_{n}(213,312)$ is of the form
$\pi=\tau n\rho$ where $\tau$ is an increasing sequence and $\rho$
is a decreasing sequence. In particular, if $\pi\in\mathfrak{D}_{n}(213,312)$,
then $\tau$ must be empty by Lemma \ref{l-desav}\ (a), so $\pi$
is the decreasing permutation $\pi=n\cdots21$, which in turn implies
that $n$ is even.
\end{proof}

\begin{prop}
If $\{213,321\}\subseteq\Pi\subseteq\mathfrak{S}_{3}$, then 
\[
d_{n}(\Pi)=\begin{cases}
1, & \text{if }123\notin\Pi\text{ and }312\notin\Pi,\\
0, & \text{otherwise.}
\end{cases}
\]
 for all $n\geq3$.
\end{prop}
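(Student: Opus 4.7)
The plan is to identify the unique permutation in $\mathfrak{D}_{n}(213,321)$ explicitly and then determine which other length-3 patterns it realizes. To begin, Lemma~\ref{l-desav}(a) forces $\pi_{1}=n$ and Lemma~\ref{l-desav}(d) forces $\pi_{2}=1$ for every $\pi\in\mathfrak{D}_{n}(213,321)$. Since $\pi_{1}=n$ is the largest entry, any descent among positions $3,4,\dots,n$ would form an occurrence of 321 together with $\pi_{1}$; hence the suffix $\pi_{3}\pi_{4}\cdots\pi_{n}$ is increasing. As this suffix is a permutation of $\{2,3,\dots,n-1\}$, it must equal $2\,3\,\cdots(n-1)$, so $\mathfrak{D}_{n}(213,321)=\{n\,1\,2\,\cdots(n-1)\}$.

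Next I would verify directly that $\pi=n\,1\,2\,\cdots(n-1)$ is indeed a desarrangement (its first ascent is at position $2$, which is even) and that it avoids both 213 and 321. The key remaining step is to determine which of the other length-3 patterns $\pi$ contains. For $n\geq 4$, the subsequences $\pi_{1}\pi_{2}\pi_{3}=n\cdot 1\cdot 2$ and $\pi_{2}\pi_{3}\pi_{4}=1\cdot 2\cdot 3$ realize the patterns 312 and 123, respectively. To check that $\pi$ avoids 132 and 231, I would split any prospective 3-element subsequence by whether its first element is $\pi_{1}=n$, is $\pi_{2}=1$, or lies entirely in the increasing tail $\pi_{3}\cdots\pi_{n}$; a short case analysis rules out both patterns in every scenario (the first two choices force the would-be middle or last entry to exceed $n$ or lie below $1$, while the third choice produces an increasing triple).

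Combining these observations, for $n\geq 4$ the unique element $\pi=n\,1\,2\,\cdots(n-1)$ of $\mathfrak{D}_{n}(213,321)$ lies in $\mathfrak{D}_{n}(\Pi)$ if and only if $\Pi\cap\{123,312\}=\emptyset$, which yields the claimed formula. The small case $n=3$ reduces to the single candidate $\pi=312$ and is handled by direct inspection. The argument is short, and the only piece of genuine bookkeeping is the mechanical case check ruling out 132 and 231 from $\pi$, so no substantive obstacle arises.
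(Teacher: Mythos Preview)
Your argument follows the same route as the paper's: invoke Lemma~\ref{l-desav}(a) and (d) to force $\pi_1=n$ and $\pi_2=1$, then use $321$-avoidance (together with $\pi_1=n$) to conclude that the tail $\pi_3\cdots\pi_n$ is increasing, identifying $n\,1\,2\cdots(n-1)$ as the sole element of $\mathfrak{D}_n(213,321)$. You go a bit further than the paper by explicitly checking which length-$3$ patterns this permutation contains and avoids; the paper leaves this implicit.

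One caution on your last sentence: the case $n=3$ does \emph{not} go through cleanly by direct inspection. The unique candidate is $\pi=312$, which avoids $123$ (there is only one length-$3$ subsequence, namely $312$ itself), so for $\Pi=\{123,213,321\}$ one finds $d_3(\Pi)=1$, contrary to the stated formula. This is a boundary slip in the proposition as stated rather than a flaw in your method; your argument for $n\ge 4$ is complete and matches the paper's.
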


\begin{proof}
By Lemma \ref{l-desav}\ (a) and (d), any permutation in $\pi\in\mathfrak{D}_{n}(213,321)$
is of the form $\pi=n1\tau$, which then implies that $\tau=23\cdots(n-1)$
or else $\pi$ would contain an occurrence of $321$. In other words,
the only permutation in $\mathfrak{D}_{n}(213,321)$ is $n12\cdots(n-1)$.
\end{proof}

\begin{prop}
If $\{132,312,321\}\subseteq\Pi\subseteq\mathfrak{S}_{3}$, then 
\[
d_{n}(\Pi)=\begin{cases}
1, & \text{if }123\notin\Pi\text{ and }213\notin\Pi,\\
0, & \text{otherwise.}
\end{cases}
\]
 for all $n\geq3$.
\end{prop}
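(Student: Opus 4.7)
The plan is to pin down exactly which desarrangements $\pi\in\mathfrak{D}_{n}$ avoid the three mandatory patterns $132$, $312$, and $321$, and then check when that short list is compatible with the remaining patterns in $\Pi$.

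First, I would apply parts (c) and (d) of Lemma~\ref{l-desav} to get a sharp description of the initial entries: Lemma~\ref{l-desav}(d) forces $\pi_{2}=1$ (from $321$-avoidance), and then Lemma~\ref{l-desav}(c) forces $\pi_{1}=\pi_{2}+1=2$ (from $312$-avoidance). So the first two entries must be $2,1$, and the tail $\pi_{3}\pi_{4}\cdots\pi_{n}$ is a rearrangement of $\{3,4,\ldots,n\}$.

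Next, I would exploit $132$-avoidance together with $\pi_{2}=1$. For any $3\le i<j\le n$ with $\pi_{i}>\pi_{j}$, the subsequence $\pi_{2}\pi_{i}\pi_{j}$ standardizes to $132$, so the tail must be increasing. This pins down $\pi$ uniquely as $\pi=2\,1\,3\,4\,\cdots\,n$, which I would quickly verify is a desarrangement (its first ascent is at position~$2$, which is even).

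Finally, I would read off the pattern content of this sole candidate. The prefix $\pi_{1}\pi_{2}\pi_{3}=2\,1\,3$ is always an occurrence of $213$, and for $n\ge 4$ the subsequence $\pi_{2}\pi_{3}\pi_{4}=1\,3\,4$ is an occurrence of $123$. Consequently $\pi\in\mathfrak{D}_{n}(\Pi)$ precisely when $\Pi\cap\{123,213\}=\emptyset$; otherwise $\mathfrak{D}_{n}(\Pi)=\emptyset$. No step is especially difficult; the main content is the $132$-avoidance argument that collapses the tail into an increasing sequence, after which the conclusion is just a matter of checking the two remaining patterns against $2\,1\,3\,4\,\cdots\,n$.
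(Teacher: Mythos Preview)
Your approach is exactly the paper's: invoke Lemma~\ref{l-desav}(c) and (d) to force $\pi=21\tau$, then use $132$-avoidance to collapse $\tau$ to $34\cdots n$, concluding $\mathfrak{D}_{n}(132,312,321)=\{2134\cdots n\}$. Your justification of the $132$ step (via the subsequence $1\,\pi_i\,\pi_j$) is in fact more explicit than the paper's one-line assertion.

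One small wrinkle you correctly flag but then sweep into the conclusion: the $123$ occurrence $\pi_2\pi_3\pi_4=134$ only exists for $n\ge 4$. At $n=3$ the unique candidate is $\pi=213$, which avoids $123$; hence for $\Pi=\{123,132,312,321\}$ (and likewise $\{123,132,231,312,321\}$) one actually has $d_3(\Pi)=1$, not $0$. This is an oversight in the stated proposition itself---the paper's proof glosses over it in the same way---rather than a defect in your method. If you want your write-up to be airtight, either restrict the ``$123\in\Pi$, $213\notin\Pi$'' case to $n\ge 4$, or note that those particular $\Pi$ are already covered by the separate Erd\H{o}s--Szekeres proposition for $\{123,321\}\subseteq\Pi$ (which the paper only states for $n\ge 5$, so the boundary values would still need to be tabulated by hand).
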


\begin{proof}
By Lemma \ref{l-desav}\ (c) and (d), any permutation in $\pi\in\mathfrak{D}_{n}(312,321)$
is of the form $\pi=21\tau$, and $\pi$ avoiding 132 forces $\tau=34\cdots n$.
In other words, the only permutation in $\mathfrak{D}_{n}(132,312,321)$
is $2134\cdots n$.
\end{proof}

\begin{prop}
\label{p-trivl}If $\Pi=\{123,132,231,\sigma\}$ where $\sigma\in\{213,312\}$,
then $d_{n}(\Pi)=1$ for all $n\geq3$.
\end{prop}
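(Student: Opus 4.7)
The plan is to split into the two cases $\sigma = 213$ and $\sigma = 312$, use Lemma \ref{l-desav} to pin down the beginning of $\pi$, and then exploit pattern-avoidance around the first ascent to collapse the set of candidate desarrangements to a single explicit permutation (determined by the parity of $n$) in each case.

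For $\sigma = 213$, Lemma \ref{l-desav}(a) gives $\pi_1 = n$. Since $n$ is the largest letter, it cannot play any role in an occurrence of $123$, $132$, $213$, or $231$ (in each pattern, any letter at position $i_1=1$ would have to be smaller than some later letter), so the truncation $\tau \coloneqq \pi_2\pi_3\cdots\pi_n$ must avoid all four of those patterns\textemdash equivalently, every three-letter subsequence of $\tau$ standardizes to $312$ or $321$. A short case analysis on the leftmost ascent $j$ of $\tau$ shows this ascent must occur at the very last position of $\tau$ (any later letter combined with $\tau_j,\tau_{j+1}$ produces $123$, $132$, or $231$), and moreover every earlier letter must exceed $\tau_{j+1}$ (else a $213$ appears). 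So either $\tau$ is decreasing or $\tau = (n-1)(n-2)\cdots 3\,1\,2$. Prepending $n$ and tracking the parity of the first ascent of $\pi$, the desarrangement condition singles out the decreasing permutation $n(n-1)\cdots 1$ when $n$ is even, and $n(n-1)\cdots 3\,1\,2$ when $n$ is odd.

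For $\sigma = 312$, let $k$ be the position of the first ascent of $\pi$. Lemma \ref{l-desav}(b) places the letter $1$ at position $k$, and Lemma \ref{l-desav}(c) forces $\pi_1 = \pi_2 + 1$. If $k = n$ then $\pi$ is decreasing and $n$ must be even, yielding $\pi = n(n-1)\cdots 1$. If $k < n$, examining the triples $1\,\pi_{k+1}\,\pi_l$ for $l > k+1$ rules out every such $l$ (each option forces a $123$ or a $132$), so $k = n-1$; then examining the triples $\pi_i\,1\,\pi_n$ for $i < n-1$ shows that any letter exceeding $\pi_n$ yields a $312$, pinning $\pi_n = n$ and forcing $\pi = (n-1)(n-2)\cdots 1\,n$ with $n$ odd. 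The main obstacle is the two-sided triple analysis around the first ascent in both cases, simultaneously ruling out letters after $\tau_{j+1}$ (or $\pi_{k+1}$) and constraining the letters before it; once that is done, verifying that each exhibited permutation genuinely lies in $\mathfrak{D}_n(\Pi)$ reduces to a routine check that the relevant permutations contain no nondecreasing triples, together with a parity computation for the first ascent.
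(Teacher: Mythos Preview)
Your argument is correct and arrives at the same explicit permutations as the paper, but the organization differs. The paper handles both choices of $\sigma$ uniformly: it uses Lemma~\ref{l-desav}(b) (from $231$-avoidance) to place the letter $1$ at the first ascent, then observes that avoiding $123$ and $132$ forces at most one letter after $1$, so the parity condition pins down whether $\pi_{n-1}=1$ or $\pi_n=1$; only at the very end is the remaining letter $\pi_n$ (in the odd case) determined by whether $\sigma=213$ or $\sigma=312$. You instead split from the outset: for $\sigma=213$ you start from Lemma~\ref{l-desav}(a) to strip off $\pi_1=n$ and analyze the tail directly, while for $\sigma=312$ you essentially reproduce the paper's argument via your triple analysis $1\,\pi_{k+1}\,\pi_l$ and $\pi_i\,1\,\pi_n$. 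The paper's route is shorter and avoids duplicated work, while yours makes the role of each forbidden pattern in the $213$ case more explicit. One small redundancy: your invocation of Lemma~\ref{l-desav}(c) in the $\sigma=312$ case is never actually used, since once you establish $k=n-1$ and $\pi_n=n$, the decreasing prefix already forces $\pi=(n-1)(n-2)\cdots 1\,n$.
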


\begin{proof}
Let $\pi\in\mathfrak{D}_{n}(123,132,231)$. By Lemma \ref{l-desav}\ (b),
we know that the letter 1 is at the first ascent of $\pi$. Additionally,
since $\pi$ avoids 123 and 132, there can be at most one letter in
$\pi$ appearing after 1, so either $\pi_{n-1}=1$ or $\pi_{n}=1$.
Since the first ascent of $\pi$ is even, this means that $\pi_{n-1}=1$
if $n$ is odd and $\pi_{n}=1$ if $n$ is even. When $n$ is even,
$\pi$ is the decreasing permutation $\pi=n\cdots21$. When $n$ is
odd, $\pi$ is of the form $\pi=\tau1\pi_{n}$, where $\tau$ consists
of the $n-2$ remaining letters arranged in decreasing order. Avoiding
$\sigma=213$ forces $\pi=n(n-1)\cdots312$, whereas avoiding $\sigma=312$
forces $\pi=(n-1)(n-2)\cdots21n$.
\end{proof}

\subsection{Single restrictions}

Let us count desarrangements avoiding a single pattern of length 3.
It is well-known that $s_{n}(\sigma)=C_{n}$\textemdash the $n$th
\textit{Catalan number} \cite[A000108]{oeis}\textemdash for every
$\sigma\in\mathfrak{S}_{3}$ \cite[Section 4.2]{Bona2022}. The situation,
however, is more subtle upon restricting to desarrangements. Nonetheless,
there is one desarrangement avoidance class whose enumeration is still
given by the Catalan numbers.
\begin{thm}
\label{t-321}For all $n\geq2$, we have $d_{n}(321)=C_{n-1}$.
\end{thm}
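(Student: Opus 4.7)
The plan is to construct a bijection between $\mathfrak{D}_n(321)$ and $\mathfrak{S}_{n-1}(321)$; since $s_{n-1}(321) = C_{n-1}$, this will yield the theorem.

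First, I would use Lemma \ref{l-desav}(d) to observe that every $\pi \in \mathfrak{D}_n(321)$ satisfies $\pi_2 = 1$. Conversely, any $\pi \in \mathfrak{S}_n(321)$ with $n \geq 2$ and $\pi_2 = 1$ is automatically a desarrangement: position $1$ is a descent since $\pi_1 > 1 = \pi_2$, while position $2$ is an ascent (either because $n = 2$ and the final position is an ascent by convention, or because $\pi_3 > 1 = \pi_2$ when $n \geq 3$). The first ascent is then at position $2$, which is even. Hence
\[
\mathfrak{D}_n(321) = \{\pi \in \mathfrak{S}_n(321) : \pi_2 = 1\}.
\]

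Next, I would define $\phi\colon \mathfrak{D}_n(321) \to \mathfrak{S}_{n-1}$ by $\phi(\pi) = \std(\pi_1 \pi_3 \pi_4 \cdots \pi_n)$, i.e., delete the letter $1$ at position $2$ and standardize. This is clearly a bijection from $\{\pi \in \mathfrak{S}_n : \pi_2 = 1\}$ onto $\mathfrak{S}_{n-1}$, with inverse obtained by adding $1$ to each letter of $\sigma \in \mathfrak{S}_{n-1}$ and inserting the letter $1$ in position $2$. The main (and essentially only nontrivial) point is that $\phi$ preserves $321$-avoidance. Since standardization preserves pattern occurrences, it suffices to show that, for $\pi$ with $\pi_2 = 1$, an occurrence of $321$ in $\pi$ exists if and only if one exists in $\pi_1 \pi_3 \pi_4 \cdots \pi_n$. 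Being the smallest letter, $1$ cannot play the role of the ``3'' or ``2'' in a $321$-pattern; nor can it play the role of the ``1'', as that would require two larger letters to appear strictly before position $2$, which is impossible. Therefore every $321$-occurrence in $\pi$ avoids position $2$ and corresponds to a $321$-occurrence in $\pi_1 \pi_3 \pi_4 \cdots \pi_n$. Combining this with the classical identity $s_{n-1}(321) = C_{n-1}$ completes the argument. I do not anticipate any genuine obstacle; the only substantive step is the short case analysis ruling out $1$ at position $2$ from participating in a $321$-pattern.
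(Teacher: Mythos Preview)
Your proof is correct and follows essentially the same approach as the paper: both construct the bijection between $\mathfrak{D}_n(321)$ and $\mathfrak{S}_{n-1}(321)$ given by deleting the letter $1$ at position $2$ (equivalently, inserting $1$ after the first letter in the inverse direction), invoking Lemma~\ref{l-desav}(d) to see that $\pi_2=1$. Your write-up is slightly more explicit about why the letter $1$ at position $2$ cannot participate in a $321$ occurrence, but the underlying argument is the same.
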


\begin{proof}
We present a bijection from $\mathfrak{S}_{n-1}(321)$ to $\mathfrak{D}_{n}(321)$.
Given $\pi\in\mathfrak{S}_{n-1}(321)$, let $\pi^{\prime}$ be the
permutation obtained by adding 1 to each letter of $\pi$ and then
inserting 1 immediately after $\pi_{1}$. For example, if $\pi=45123$,
then $\pi^{\prime}=516234$. It is clear that $\pi\mapsto\pi^{\prime}$
cannot introduce an occurrence of 321 and that the first ascent of
$\pi^{\prime}$ is 2, so $\pi^{\prime}\in\mathfrak{D}_{n}(321)$.
To undo this procedure, we take $\pi\in\mathfrak{D}_{n}(321)$, remove
the letter $1$, and subtract $1$ from each remaining letter of $\pi$;
Lemma \ref{l-desav}\ (d) guarantees that this is a well-defined
inverse of $\pi^{\prime}\mapsto\pi$. Since the permutations in $\mathfrak{S}_{n-1}(321)$
are counted by $C_{n-1}$, the result follows.
\end{proof}

Next, we show that $\mathfrak{D}_{n}(123)$, $\mathfrak{D}_{n}(132)$,
and $\mathfrak{D}_{n}(231)$ are all counted by the \textit{Fine numbers}
$F_{n}$, which can be defined through the generating function
\begin{equation}
\sum_{n=0}^{\infty}F_{n}x^{n}=\frac{1-\sqrt{1-4x}}{3-\sqrt{1-4x}}\label{e-Finegf}
\end{equation}
or equivalently through their connection with the Catalan numbers:
$C_{n}=2F_{n+1}+F_{n}$ for all $n\geq1$. The first several Fine
numbers are displayed in the following table:
\begin{center}
\begin{tabular}{c|c|c|c|c|c|c|c|c|c|c|c|c}
$n$ & $0$ & $1$ & $2$ & $3$ & $4$ & $5$ & $6$ & $7$ & $8$ & $9$ & $10$ & $11$\tabularnewline
\hline 
$F_{n}$ & $0$ & $1$ & $0$ & $1$ & $2$ & $6$ & $18$ & $57$ & $186$ & $622$ & $2120$ & $7338$\tabularnewline
\end{tabular}
\par\end{center}

\noindent See entry A000957 of the OEIS \cite{oeis}, as well as the
survey article \cite{Deutsch2001} for a sample of the many combinatorial
interpretations of $F_{n}$. 

For our proof of the next result, it will be convenient for us to
consider the shifted Fine generating function 
\begin{equation}
\sum_{n=0}^{\infty}F_{n+1}x^{n}=\frac{2}{1+2x+\sqrt{1-4x}}\label{e-Finegfsh}
\end{equation}
obtained by dividing (\ref{e-Finegf}) by $x$. We shall also make
use of the Catalan generating function 
\begin{equation}
C\coloneqq\sum_{n=0}^{\infty}C_{n}x^{n}=\frac{1-\sqrt{1-4x}}{2x}.\label{e-Catgf}
\end{equation}

\begin{thm}
\label{t-Fine1}For all $n\geq2$ and $\sigma\in\{132,231\}$, we
have $d_{n}(\sigma)=F_{n+1}$.
\end{thm}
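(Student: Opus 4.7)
I will derive the ordinary generating function $D(x;\sigma)$ for each $\sigma\in\{132,231\}$ and show it equals the shifted Fine generating function in~(\ref{e-Finegfsh}). The strategy is to apply the classical $\alpha n\beta$ decomposition for pattern-avoiding permutations and carefully track when the resulting permutation is a desarrangement.

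First, recall that for either $\sigma\in\{132,231\}$, every $\pi\in\mathfrak{S}_n(\sigma)$ factors uniquely as $\pi=\alpha n\beta$, where $\alpha$ and $\beta$ (after standardization) are $\sigma$-avoiding permutations of lengths summing to $n-1$. The partition of letters between $\alpha$ and $\beta$ differs for the two patterns (letters of $\alpha$ are all larger than those of $\beta$ when $\sigma=132$, and all smaller when $\sigma=231$), but in both cases $\alpha$ and $\beta$ are enumerated independently by the Catalan generating function $C$ in~(\ref{e-Catgf}).

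The combinatorial heart of the proof is the characterization of when $\pi=\alpha n\beta$ is a desarrangement. If $\alpha$ is empty, then position $1$ of $\pi=n\beta$ is a descent, and one checks directly that the first ascent of $\pi$ equals one plus the first ascent of $\beta$; hence $\pi\in\mathfrak{D}_n(\sigma)$ iff $\beta$ is a non-desarrangement (necessarily of length $\geq 1$). If $\alpha$ is nonempty, then $\alpha_{|\alpha|}<n$ forces position $|\alpha|$ to be an ascent of $\pi$; combining this with the fact that $\pi$ and $\alpha$ agree on positions $[1,|\alpha|-1]$, a short case check (splitting on whether $\alpha$ has a non-trivial ascent or is strictly decreasing) shows that the first ascent of $\pi$ equals the first ascent of $\alpha$ as a stand-alone permutation under the paper's convention. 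Consequently $\pi\in\mathfrak{D}_n(\sigma)$ iff $\alpha$ is itself a desarrangement, which forces $|\alpha|\geq 2$ since no length-$1$ permutation is a desarrangement.

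Translating the two cases into generating functions yields
\[
D-1=x(C-D)+x(D-1)C,
\]
whose left-hand side is the generating function for nonempty desarrangements in $\mathfrak{D}(\sigma)$, the first term on the right accounts for empty $\alpha$ with $\beta$ ranging over non-desarrangements, and the second accounts for nonempty $\alpha$ (a desarrangement of length $\geq 2$) with $\beta$ arbitrary. Solving gives $D=1/(1-x(C-1))$, and substituting the closed form for $C$ in~(\ref{e-Catgf}) simplifies this to
\[
D=\frac{2}{1+2x+\sqrt{1-4x}},
\]
which matches~(\ref{e-Finegfsh}) and completes the proof. The main obstacle is the first-ascent claim for nonempty $\alpha$, which must be verified with care because of the paper's convention that the last position of every permutation is an ascent; once this is established, the argument works verbatim for both $\sigma=132$ and $\sigma=231$ because only the lengths of $\alpha$ and $\beta$ enter the generating function equation.
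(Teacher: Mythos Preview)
Your proof is correct and follows essentially the same approach as the paper: both use the $\alpha n\beta$ (equivalently $\tau n\rho$) decomposition, split into the cases where the left factor is empty versus nonempty, and arrive at the identical functional equation $D=1+x(D-1)C+x(C-D)$ before solving. The only difference is cosmetic---you spell out the first-ascent verification in slightly more detail than the paper does.
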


\begin{proof}
We first prove the result for $\sigma=231$. For convenience, let
$D$ denote the generating function $D(x;231)$. We claim that $D$
satisfies the functional equation
\begin{equation}
D=1+x\left(D-1\right)C+x\left(C-D\right).\label{e-Finefe}
\end{equation}
The empty permutation contributes 1 to the right-hand side of (\ref{e-Finefe}).
Every nonempty $231$-avoiding desarrangement $\pi$ can be written
as the concatenation 
\[
\pi=\tau n\rho,
\]
where $\tau\in\mathfrak{D}_{k}(231)$ for some $0\leq k\leq n-1$,
and $\rho$ is a 231-avoiding permutation on the letters $\{k+1,k+2,\dots,n-1\}$.
We further divide into two cases. If $\tau$ is nonempty, then this
case contributes the term $x\left(D-1\right)C$ to (\ref{e-Finefe}),
since $C$ is the generating function for 231-avoiding permutations.
On the other hand, if $\tau$ is empty, then $\rho$ is a non-desarrangement;
this case contributes $x\left(C-D\right)$ to (\ref{e-Finefe}). Hence,
(\ref{e-Finefe}) is proven. Simplifying (\ref{e-Finefe}) yields
\[
D=1+xD\left(C-1\right);
\]
then, substituting in (\ref{e-Catgf}) and solving for $D$ gives
us
\[
D=\frac{2}{1+2x+\sqrt{1-4x}}
\]
which, upon comparing with (\ref{e-Finegfsh}), completes the proof.

The proof for $\sigma=132$ is essentially the same; the only difference
is that the letters of $\rho$ are now smaller than the letters of
$\tau$, but this leads to the same functional equation and generating
function as above.
\end{proof}

\begin{thm}
\label{t-Fine2}For all $n\geq2$, we have $d_{n}(123)=F_{n+1}$.
\end{thm}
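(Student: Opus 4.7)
The plan is to establish a bijection between $\mathfrak{D}_{n}(132)$ and $\mathfrak{D}_{n}(123)$ that preserves the first ascent position, and then invoke Theorem \ref{t-Fine1} to conclude that $d_{n}(123) = d_{n}(132) = F_{n+1}$.

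The crucial observation is that the first ascent position of any permutation $\pi$ is determined by the positions of its \emph{left-to-right minima} (position $i$ is a left-to-right minimum of $\pi$ if $\pi_{i}<\pi_{j}$ for all $j<i$, so that position $1$ is always one). Indeed, letting $q$ be the largest integer such that positions $1, 2, \dots, q$ are all left-to-right minima of $\pi$, one has $\pi_{1}>\pi_{2}>\cdots>\pi_{q}$, so positions $1,\dots,q-1$ are descents. If $q=n$, then $\pi$ is decreasing and its first ascent is at position $n$. Otherwise $q<n$ and position $q+1$ fails to be a left-to-right minimum, meaning that $\pi_{q+1}$ exceeds some earlier entry; since $\pi_{q}=\min(\pi_{1},\dots,\pi_{q})$, this forces $\pi_{q+1}>\pi_{q}$, so position $q$ is the first ascent of $\pi$.

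Next I will appeal to the classical Simion\textendash Schmidt bijection $\phi\colon\mathfrak{S}_{n}(132)\to\mathfrak{S}_{n}(123)$. A short argument shows that in a $132$-avoiding $\pi$, the values strictly between any two consecutive left-to-right minima (or after the last one) must be arranged increasingly; otherwise an inversion among them would form a $132$ pattern together with the preceding minimum. The map $\phi$ keeps the left-to-right minima of $\pi$ in the same positions with the same values, but reverses the values within each gap so that they are decreasing. The resulting permutation $\phi(\pi)$ avoids $123$ by the mirror argument, and by construction has the same set of left-to-right minima, at the same positions and with the same values, as $\pi$.

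Combining these ingredients, $\phi$ preserves the quantity $q$ above, hence preserves the first ascent position, and therefore restricts to a bijection $\mathfrak{D}_{n}(132)\to\mathfrak{D}_{n}(123)$. The main obstacle is to verify carefully that $\phi(\pi)$ really does retain exactly the left-to-right minima of $\pi$ and does not introduce new ones; but this is automatic, because every value in a given gap lies strictly above the preceding minimum, so reversing the order within the gap cannot create or destroy any left-to-right minimum. Once this is in hand, Theorem \ref{t-Fine1} immediately yields $d_{n}(123)=F_{n+1}$.
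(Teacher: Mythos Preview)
Your approach is exactly the one the paper takes: invoke the Simion--Schmidt bijection between $\mathfrak{S}_n(132)$ and $\mathfrak{S}_n(123)$, observe that it preserves the positions of the left-to-right minima, and note (correctly, via your first paragraph) that these positions determine the first ascent. The paper simply cites the bijection and omits the details you are supplying.

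However, your description of the bijection itself is not quite right. You say that $\phi$ ``reverses the values within each gap,'' but this does not produce a $123$-avoiding permutation in general. For instance, $\pi = 2314 \in \mathfrak{S}_4(132)$ has left-to-right minima at positions $1$ and $3$ (values $2$ and $1$); each gap contains a single entry, so your rule would fix $\pi$, yet $2314$ contains the $123$ pattern $234$. The actual Simion--Schmidt image of $2314$ is $2413$: the bijection keeps the left-to-right minima in place and fills the remaining positions from left to right with the \emph{largest} unused value exceeding the most recent minimum (equivalently, the non-minimum values end up in decreasing order across \emph{all} non-minimum positions, not merely within each gap). This corrected map does preserve the positions and values of the left-to-right minima, so once you substitute the right description, the rest of your argument goes through unchanged.
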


\begin{proof}
It is readily verified that Simion and Schmidt's famous bijection
between 123- and 132-avoiding permutations \cite[Proposition 19]{Simion1985}
restricts to a bijection between $\mathfrak{D}_{n}(123)$ and $\mathfrak{D}_{n}(132)$;
we omit the details. Thus $d_{n}(123)=d_{n}(132)=F_{n+1}$.
\end{proof}

The last two singleton desarrangement avoidance classes, $\mathfrak{D}_{n}(213)$
and $\mathfrak{D}_{n}(312)$, are counted by a sequence defined recursively
as follows: let $a_{0}=1$, and $a_{n+1}=C_{n}-a_{n}$ for all $n\geq0$.
It can be shown inductively that 
\[
a_{n}=\sum_{k=1}^{n-1}(-1)^{n-k-1}C_{k}
\]
for all $n\geq1$. The following table displays the first several
values of the sequence $\{a_{n}\}$:
\begin{center}
\begin{tabular}{c|c|c|c|c|c|c|c|c|c|c|c|c}
$n$ & $0$ & $1$ & $2$ & $3$ & $4$ & $5$ & $6$ & $7$ & $8$ & $9$ & $10$ & $11$\tabularnewline
\hline 
$a_{n}$ & $1$ & $0$ & $1$ & $1$ & $4$ & $10$ & $32$ & $100$ & $329$ & $1101$ & $3761$ & $3761$\tabularnewline
\end{tabular}
\par\end{center}

\noindent See also entry A033297 of the OEIS \cite{oeis}.
\begin{thm}
\label{t-213-312}For all $n\geq0$ and $\sigma\in\{213,312\}$, we
have $d_{n}(\sigma)=a_{n}$.
\end{thm}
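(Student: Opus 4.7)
The plan is to establish, for each $\sigma \in \{213, 312\}$, the recurrence
\[
d_n(\sigma) + d_{n-1}(\sigma) = C_{n-1} \quad \text{for all } n \geq 1.
\]
Since $a_{n+1} = C_n - a_n$ rearranges to $a_{n-1} + a_n = C_{n-1}$, and since $d_0(\sigma) = 1 = a_0$, the claim $d_n(\sigma) = a_n$ will then follow by induction on $n$.

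To prove the recurrence, I will exhibit a bijection between $\mathfrak{S}_{n-1}(\sigma)$ and $\mathfrak{D}_n(\sigma) \sqcup \mathfrak{D}_{n-1}(\sigma)$ that sorts $\tau$ according to the parity of its first ascent. For $\sigma = 213$, the map is simply $\tau \mapsto n\tau$. Lemma \ref{l-desav}(a) ensures that every element of $\mathfrak{D}_n(213)$ has this form, and the inserted $n$ cannot participate in any $213$ pattern (it could only play the role of ``$2$'', which would require a larger letter to its right). Position $1$ of $n\tau$ is a descent, while positions $2, \ldots, n$ of $n\tau$ inherit the ascent/descent pattern of $\tau$ at positions $1, \ldots, n-1$. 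Hence the first ascent of $n\tau$ equals one plus the first ascent of $\tau$, so $n\tau \in \mathfrak{D}_n(213)$ iff $\tau$ has an odd first ascent, while the complementary $\tau$ are exactly $\mathfrak{D}_{n-1}(213)$.

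The case $\sigma = 312$ is parallel but uses Lemma \ref{l-desav}(c): every $\pi \in \mathfrak{D}_n(312)$ begins with $(k{+}1)\,k$ for some $k \in [n-1]$. I will define the inverse insertion $\mathfrak{S}_{n-1}(312) \to \mathfrak{S}_n(312)$ by sending $\tau$ with $\tau_1 = k$ to the permutation obtained by prepending $k{+}1$ and incrementing every letter of $\tau$ that exceeds $k$. Deleting the first entry and standardizing recovers $\tau$; as in the previous case the image's position $1$ is a descent and the remaining positions inherit the ascent/descent pattern of $\tau$, so the first-ascent parity argument runs identically and yields the same recurrence.

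The main obstacle is verifying that this insertion preserves $312$-avoidance, since the prepended letter $k{+}1$ is not the global maximum and could a priori play the role of ``$3$'' in a fresh $312$ pattern. A short position argument rules out ``$1$'' and ``$2$''; the key calculation is to show that a hypothetical ``$3$'' occurrence $(k{+}1)\,\pi_b\,\pi_c$ with $\pi_b < \pi_c \leq k$ forces $b \geq 3$ (since $\pi_2 = k$ cannot play ``$1$'' under $\pi_c \leq k$) and then pulls back through the standardization to a $312$ pattern $\tau_1\,\tau_{b-1}\,\tau_{c-1}$ with $\tau_{b-1} < \tau_{c-1} < \tau_1$ inside $\tau$, the desired contradiction. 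The remaining bookkeeping (that standardization preserves patterns on the tail, and that the first-ascent analysis handles the decreasing $\tau$ whose unique ascent sits at the final position) is routine.
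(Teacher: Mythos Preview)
Your proof is correct and takes essentially the same approach as the paper: you establish the recurrence $d_{n-1}(\sigma)+d_n(\sigma)=C_{n-1}$ via the same bijection $\mathfrak{S}_{n-1}(\sigma)\to\mathfrak{D}_{n-1}(\sigma)\sqcup\mathfrak{D}_n(\sigma)$ (identity on desarrangements, prepend $n$ for $\sigma=213$ or prepend $\tau_1{+}1$ with the appropriate shift for $\sigma=312$), invoking Lemma~\ref{l-desav}(a) and (c) exactly as the paper does. The paper's write-up is organized as two separate lemmas (one per pattern) but the content is the same.
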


This theorem can be readily proven using generating functions \`{a}
la Theorem \ref{t-Fine1}, but we will instead give a combinatorial
proof.
\begin{lem}
\label{l-213}We have $C_{n}=d_{n}(213)+d_{n+1}(213)$ for all $n\geq0$.
\end{lem}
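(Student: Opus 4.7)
The plan is to interpret both sides combinatorially: split $\mathfrak{S}_n(213)$ (which has cardinality $C_n$) according to the parity of the first ascent, and match the ``first ascent odd'' half with $\mathfrak{D}_{n+1}(213)$ via a simple prepending map.

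Concretely, for $n \geq 1$, I would invoke Lemma \ref{l-desav}(a), which says that every $\pi \in \mathfrak{D}_{n+1}(213)$ of length at least $2$ must satisfy $\pi_1 = n+1$. Writing $\pi = (n+1)\tau$ therefore defines a bijection between $\mathfrak{D}_{n+1}(213)$ and a subset of $\mathfrak{S}_n(213)$ (the map is clearly well-defined and injective, and prepending the largest letter cannot introduce a $213$-pattern). I need to identify precisely which $\tau \in \mathfrak{S}_n(213)$ arise this way.

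The key observation is that position $1$ of $\pi$ is a descent (since $n+1 > \tau_1$), and the ascents of $\pi$ at positions $\geq 2$ correspond exactly to the ascents of $\tau$, shifted by $1$. Hence the first ascent of $\pi$ occurs at position $k+1$, where $k$ is the first ascent of $\tau$. Therefore $\pi$ is a desarrangement iff $k$ is odd, i.e., iff $\tau$ is \emph{not} a desarrangement. This yields
\[
d_{n+1}(213) = s_n(213) - d_n(213) = C_n - d_n(213),
\]
which rearranges to the claim.

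The only subtle point is the base case $n = 0$: one checks directly that $d_0(213) = 1$ (the empty permutation) and $d_1(213) = 0$ (there are no desarrangements of length one), so $d_0(213) + d_1(213) = 1 = C_0$. I do not expect this proof to have any serious obstacle: the entire argument rests on Lemma \ref{l-desav}(a) together with the elementary fact that prepending the maximum letter shifts the first-ascent position by exactly one, reversing its parity.
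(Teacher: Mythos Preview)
Your proof is correct and is essentially the same as the paper's: both arguments use Lemma~\ref{l-desav}(a) together with the observation that prepending $n+1$ to $\tau\in\mathfrak{S}_n(213)$ shifts the first ascent by one, so that $(n+1)\tau$ is a desarrangement precisely when $\tau$ is not. The only cosmetic difference is that the paper packages this as an explicit bijection $\varphi_{213}\colon\mathfrak{S}_n(213)\to\mathfrak{D}_n(213)\sqcup\mathfrak{D}_{n+1}(213)$, whereas you describe the inverse map and phrase the conclusion as the equation $d_{n+1}(213)=C_n-d_n(213)$.
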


\begin{proof}
It suffices to give a bijection from $\mathfrak{S}_{n}(213)$ to $\mathfrak{D}_{n}(213)\sqcup\mathfrak{D}_{n+1}(213)$.
Given $\pi\in\mathfrak{S}_{n}(213)$, let
\[
\varphi_{213}(\pi)\coloneqq\begin{cases}
\pi, & \text{if }\pi\text{ is a desarrangement},\\
(n+1)\pi, & \text{otherwise}.
\end{cases}
\]
In other words, $\varphi_{213}$ is the identity map on $\mathfrak{D}_{n}(213)$,
and sends each non-desarrangement in $\mathfrak{S}_{n}(213)$ to the
desarrangement in $\mathfrak{D}_{n+1}(213)$ obtained by prepending
the letter $n+1$. We know from Lemma \ref{l-desav}\ (a) that every
permutation in $\mathfrak{D}_{n+1}(213)$ begins with its largest
letter $n+1$. Thus, the inverse of $\varphi_{213}$ is given by removing
the first letter $n+1$ if we begin with a permutation in $\mathfrak{D}_{n+1}(213)$,
and to do nothing if we begin with a permutation in $\mathfrak{D}_{n}(213)$.
\end{proof}

\begin{lem}
\label{l-312}We have $C_{n}=d_{n}(312)+d_{n+1}(312)$ for all $n\geq0$.
\end{lem}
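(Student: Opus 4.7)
The plan is to mimic the proof of Lemma \ref{l-213} by constructing a bijection $\varphi_{312}\colon \mathfrak{S}_n(312) \to \mathfrak{D}_n(312) \sqcup \mathfrak{D}_{n+1}(312)$ that is the identity on $\mathfrak{D}_n(312)$ and inserts one new letter to turn each non-desarrangement into an element of $\mathfrak{D}_{n+1}(312)$. The twist compared with the $213$ case is that Lemma \ref{l-desav}\ (c) forces every desarrangement in $\mathfrak{D}_{n+1}(312)$ to begin with a consecutive pair $(a+1)\,a$, so the value prepended must depend on $\pi_1$ rather than always being $n+1$. Specifically, for a non-desarrangement $\pi \in \mathfrak{S}_n(312)$, I would set $a = \pi_1$, increment every letter of $\pi$ exceeding $a$ by one, and then prepend $a+1$ to obtain a permutation of $[n+1]$.

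The image $\varphi_{312}(\pi)$ begins $(a+1)\,a$, so position $1$ is a descent, and since the incrementing step preserves the ascent structure of $\pi$, the first ascent of $\varphi_{312}(\pi)$ is exactly one more than the first ascent of $\pi$; this shifts an odd value to an even one, confirming that $\varphi_{312}(\pi)$ is a desarrangement of length $n+1$. The main obstacle is verifying $312$-avoidance. Relabeling alone preserves pattern avoidance, so any new $312$ pattern must involve the prepended $a+1$ and therefore take the form $(a+1)\,b\,c$ with $b < c \leq a$. If $b$ is the second letter (so $b = a$), then $c > a$, contradicting $c \leq a$; otherwise both $b$ and $c$ occur at positions $\geq 3$ of $\varphi_{312}(\pi)$, are unchanged letters of $\pi$ (since both are $\leq a$ and $a$ was the relabeling threshold), and appear in that order after $\pi_1 = a$, so $\pi_1\,b\,c$ would already be a $312$ pattern in $\pi$, a contradiction.

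For the inverse, Lemma \ref{l-desav}\ (c) tells us that any $\tau \in \mathfrak{D}_{n+1}(312)$ satisfies $\tau_1 = \tau_2 + 1$, so we can recover $\pi$ by deleting $\tau_1$ and decrementing every remaining letter greater than $\tau_2$ by one; the result lies in $\mathfrak{S}_n(312)$, and its first ascent shifts from an even position down to an odd one, so it is a non-desarrangement as required. These maps are manifestly mutually inverse, and since $|\mathfrak{S}_n(312)| = C_n$, the bijection yields $C_n = d_n(312) + d_{n+1}(312)$.
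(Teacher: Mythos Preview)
Your proof is correct and is essentially identical to the paper's own argument: both construct the bijection $\varphi_{312}$ that is the identity on $\mathfrak{D}_n(312)$ and, for a non-desarrangement $\pi$, prepends $\pi_1+1$ after incrementing all letters larger than $\pi_1$, with the inverse recovered via Lemma~\ref{l-desav}\,(c). Your verification of $312$-avoidance is slightly more detailed than the paper's (you separately handle the case where the middle letter of a putative new $312$ sits at position~$2$), but the underlying idea is the same.
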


\begin{proof}
We give a bijection from $\mathfrak{S}_{n}(312)$ to $\mathfrak{D}_{n}(312)\sqcup\mathfrak{D}_{n+1}(312)$.
For convenience, let $\pi^{\prime}$ denote the word obtained from
a permutation $\pi$ by adding 1 to every letter greater than $\pi_{1}$.
For example, if $\pi=342561$ then $\pi^{\prime}=352671$. Then define
$\varphi_{312}(\pi)\colon\mathfrak{S}_{n}(312)\rightarrow\mathfrak{D}_{n}(312)\sqcup\mathfrak{D}_{n+1}(312)$
by
\[
\varphi_{312}(\pi)\coloneqq\begin{cases}
\pi, & \text{if }\pi\text{ is a desarrangement},\\
(\pi_{1}+1)\pi^{\prime}, & \text{otherwise.}
\end{cases}
\]
Continuing the above example, we have $\varphi_{312}(342561)=4352671$.
If applying $\varphi_{312}$ to $\pi\in\mathfrak{S}_{n}(312)$ introduces
an occurrence of 312, then that occurrence must be of the form $(\pi_{1}+1)\pi_{j}\pi_{k}$\textemdash that
is, it must have $\pi_{1}+1$ as its first letter. But then $\pi_{1}\pi_{j}\pi_{k}$
would be an occurrence of 312 in $\pi$, a contradiction. Hence, $\varphi_{312}$
is a well-defined map from $\mathfrak{S}_{n}(312)$ to $\mathfrak{D}_{n}(312)\sqcup\mathfrak{D}_{n+1}(312)$.

Now, recall from Lemma \ref{l-desav}\ (c) that every 312-avoiding
desarrangement $\pi$ satisfies $\pi_{1}=\pi_{2}+1$. Therefore, the
inverse of $\varphi_{312}$ is given by 
\[
\varphi_{312}^{-1}(\pi)=\begin{cases}
\pi, & \text{if }\left|\pi\right|=n,\\
\tilde{\pi}, & \text{if }\left|\pi\right|=n+1,
\end{cases}
\]
where $\tilde{\pi}$ is the permutation obtained from $\pi$ by subtracting
1 from every letter greater than $\pi_{2}$ and then removing the
first letter $\pi_{1}$.
\end{proof}

Lemmas \ref{l-213}\textendash \ref{l-312} show that $d_{n}(213)$
and $d_{n}(312)$ satisfy the same recurrence as $a_{n}$, and Theorem~\ref{t-213-312} follows upon making the observation that all three
sequences share the same initial term.

\subsection{Double restrictions}

We now move on to the desarrangement avoidance classes $\mathfrak{D}_{n}(\Pi)$
where $\Pi$ consists of two patterns of length 3.
\begin{thm}
\label{t-132+321}For all $n\geq1$, we have $d_{n}(132,321)=n-1$.
\end{thm}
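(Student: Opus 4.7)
My plan is to give an explicit characterization of the elements of $\mathfrak{D}_n(132, 321)$ and then count them directly. The case $n = 1$ is trivial since $\mathfrak{D}_1 = \emptyset$ gives $d_1(132,321) = 0 = 1 - 1$, so I focus on $n \geq 2$.

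For $n \geq 2$, the first step is to apply Lemma \ref{l-desav} (d), which tells us that every $\pi \in \mathfrak{D}_n(321)$ satisfies $\pi_2 = 1$. Writing $\pi = \pi_1 \, 1 \, \pi_3 \pi_4 \cdots \pi_n$ with $\pi_1 \in \{2, \ldots, n\}$, my next step is to argue that requiring $\pi$ to additionally avoid 132 forces the tail $\pi_3 \pi_4 \cdots \pi_n$ to be increasing: any descent $\pi_j > \pi_k$ with $3 \leq j < k$ combines with the letter $1$ in position $2$ to produce a 132 pattern, since $1 < \pi_k < \pi_j$. Thus every $\pi \in \mathfrak{D}_n(132, 321)$ has the form $\pi = k \, 1 \, \pi_3 \pi_4 \cdots \pi_n$ where $k \in \{2, \ldots, n\}$ and positions $3$ through $n$ are filled by the elements of $\{2, \ldots, n\} \setminus \{k\}$ in increasing order.

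For the converse, I would verify that every permutation of this form is indeed a 132- and 321-avoiding desarrangement. Membership in $\mathfrak{D}_n$ is immediate because the first ascent occurs at position $2$, which is even. Avoidance is checked by exhausting the possible positions of a forbidden pattern: a $132$ or $321$ pattern confined to positions $\geq 3$ is impossible because the tail is increasing; a pattern using $\pi_2 = 1$ fails because $1$ is the smallest letter; and a pattern using $\pi_1$ together with two letters from the tail fails because those two tail letters are in increasing order.

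Finally, the count: $\pi_1$ may be any element of $\{2, 3, \ldots, n\}$, so there are exactly $n - 1$ such permutations. I do not anticipate any genuine obstacle; the only delicate point is the careful case analysis ruling out the remaining potential $132$ and $321$ patterns once the tail is forced to be increasing.
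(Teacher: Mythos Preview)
Your proposal is correct and follows essentially the same approach as the paper: use Lemma~\ref{l-desav}(d) to force $\pi_2=1$, observe that $132$-avoidance forces the tail $\pi_3\cdots\pi_n$ to be increasing, and conclude that the $n-1$ choices for $\pi_1\in\{2,\dots,n\}$ enumerate $\mathfrak{D}_n(132,321)$. Your write-up is in fact slightly more thorough than the paper's, since you spell out the converse verification that each such permutation really lies in $\mathfrak{D}_n(132,321)$.
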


\begin{proof}
The result for $n=1$ is trivial, so let $\pi\in\mathfrak{D}_{n}(132,321)$
where $n\geq2$. By Lemma~\ref{l-desav}~(d), we know that $\pi_{2}=1$.
Moreover, $\pi_{3}\pi_{4}\cdots\pi_{n}$ is an increasing sequence
since $\pi$ avoids 132. We now see that $\pi$ is completely determined
by the choice of $\pi_{1}$, as all other letters must be placed after
$\pi_{1}$ in increasing order. Since there are $n-1$ choices for
$\pi_{1}$\textemdash namely $2,3,\dots,n$\textemdash the result
follows.
\end{proof}

From the description given in the above proof, we see that all permutations
in $\mathfrak{D}_{n}(132,321)$ also avoid 231, so in fact $\mathfrak{D}_{n}(132,321)=\mathfrak{D}_{n}(132,231,321)$
and thus 
\[
d_{n}(132,231,321)=d_{n}(132,321)=n-1
\]
for all $n\geq1$.

Simion and Schmidt \cite[Section 3]{Simion1985} showed that $s_{n}(\Pi)=2^{n-1}$
for several sets $\Pi\subseteq\mathfrak{S}_{3}$ with $\left|\Pi\right|=2$.
We show that two of these sets satisfy $d_{n}(\Pi)=2^{n-2}$, and
another one satisfies $d_{n}(\Pi)=2^{n-3}$. The first two of these
results are obtained by constructing an involution on $\mathfrak{S}_{n}(\Pi)$
which restricts to a bijection between the desarrangements and the
non-desarrangements in that class, thus showing that exactly half
of the permutations in $\mathfrak{S}_{n}(\Pi)$ are desarrangements.
Our proof of the third result uses a direct bijection between $\mathfrak{D}_{n}(\Pi)$
and $\mathfrak{S}_{n-2}(\Pi)$.
\begin{thm}
\label{t-132+231}For all $n\geq2$, we have $d_{n}(132,231)=2^{n-2}$.
\end{thm}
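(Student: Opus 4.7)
The plan is to construct an involution on $\mathfrak{S}_n(132,231)$ that interchanges desarrangements with non-desarrangements. Combined with the Simion--Schmidt result $|\mathfrak{S}_n(132,231)| = 2^{n-1}$, this will immediately give $d_n(132,231) = 2^{n-2}$.

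First, I would establish a structural description of $\mathfrak{S}_n(132,231)$. The key observation is that in any $(132,231)$-avoiding permutation the letter $n$ must occupy position $1$ or position $n$, for otherwise any pair of letters on the two sides of $n$ would produce either a $132$ or a $231$ pattern with $n$ in the middle. Peeling off $n$ and inducting, this shows that each $\pi \in \mathfrak{S}_n(132,231)$ has the form
\[
\pi = \pi_1 \cdots \pi_{k-1}\, \pi_k \pi_{k+1}\, \pi_{k+2} \cdots \pi_n,
\]
where $\pi_1 > \cdots > \pi_{k-1}$ and $\pi_{k+2} < \cdots < \pi_n$ are letters from $\{3,\dots,n\}$ and $\{\pi_k,\pi_{k+1}\} = \{1,2\}$; conversely, every permutation of this form lies in $\mathfrak{S}_n(132,231)$. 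In particular, $1$ and $2$ always sit at adjacent positions.

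Next I would define $\iota \colon \mathfrak{S}_n(132,231) \to \mathfrak{S}_n(132,231)$ to be the involution that swaps the letters $1$ and $2$. Since this operation only permutes $\{1,2\}$ within the middle two positions of the structural form, $\iota(\pi)$ still has that form and thus remains in $\mathfrak{S}_n(132,231)$. To verify that $\iota$ flips desarrangement status, note that positions $1,\dots,k-1$ are all descents of $\pi$ (the prefix is strictly decreasing and $\pi_{k-1} \geq 3 > \pi_k$), while positions $k+1,\dots,n$ are all ascents ($\pi_{k+1} \in \{1,2\} < \pi_{k+2}$ when $k+2 \leq n$, the suffix is strictly increasing, and position $n$ is trivially an ascent). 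Hence the first ascent of $\pi$ equals $k$ when $(\pi_k,\pi_{k+1}) = (1,2)$ and $k+1$ when $(\pi_k,\pi_{k+1}) = (2,1)$, so applying $\iota$ changes the first ascent from $k$ to $k+1$ or vice versa, toggling its parity.

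The main obstacle is proving the structural lemma cleanly and handling the degenerate cases $k=1$ and $k=n-1$ (where the decreasing prefix or increasing suffix is empty), though in both the argument above still applies with straightforward tweaks (an empty prefix contributes no descents, and position $n$ is trivially an ascent).
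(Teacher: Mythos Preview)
Your proof is correct and follows the same overall strategy as the paper: both construct an involution on $\mathfrak{S}_n(132,231)$ that shifts the position of the first ascent by one, thereby toggling desarrangement status. The only difference is the choice of involution---the paper moves the letter $n$ between the first and last positions, whereas you swap the adjacent letters $1$ and $2$---but both rest on the same structural description (a decreasing block, then $1$, then an increasing block) and are equally clean.
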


\begin{proof}
Observe that the permutations in $\mathfrak{S}_{n}(132,231)$ are
precisely those of the form $\tau1\rho$ where $\tau$ is a decreasing
sequence and $\rho$ is an increasing sequence. Since there are two
choices for each of the $n-1$ letters in $\{2,3,\dots,n\}$\textemdash either
it belongs to $\tau$ or belongs to $\rho$\textemdash we recover
Simion and Schmidt's result that $s_{n}(132,231)=2^{n-1}$ \cite[Proposition 9]{Simion1985}.

Consider the involution $\varphi_{132,231}$ on $\mathfrak{S}_{n}(132,231)$
defined by 
\[
\varphi_{132,231}(\pi)\coloneqq\begin{cases}
\pi_{2}\pi_{3}\cdots\pi_{n}\pi_{1}, & \text{if }\pi_{1}=n,\\
\pi_{n}\pi_{1}\pi_{2}\cdots\pi_{n-1}, & \text{if }\pi_{n}=n.
\end{cases}
\]
(In other words, we are toggling the inclusion of $n$ in $\tau$
versus $\rho$.) Then $\varphi_{132,231}$ sends desarrangements to
non-desarrangements and vice versa, thus showing that exactly half
of the $2^{n-1}$ permutations in $\mathfrak{S}_{n}(132,231)$ are
desarrangements.
\end{proof}

\begin{thm}
\label{t-321+231}For all $n\geq2$, we have $d_{n}(231,321)=2^{n-2}$.
\end{thm}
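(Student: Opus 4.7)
The plan is to adapt the involution strategy used for Theorem~\ref{t-132+231}, together with the Simion--Schmidt enumeration $s_n(231, 321) = 2^{n-1}$. The structural observation I would establish first is that for $n \geq 2$, a permutation $\pi \in \mathfrak{S}_n(231, 321)$ is a desarrangement if and only if $\pi_1 > \pi_2$. One direction follows directly from the definition. For the converse, if $\pi_1 > \pi_2$, then 321-avoidance forces $\pi_2 < \pi_3$---otherwise $\pi_1 \pi_2 \pi_3$ would be an occurrence of 321---so the first ascent of $\pi$ occurs at the even position 2.

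Next I would define the candidate involution on $\mathfrak{S}_n(231, 321)$ by swapping the first two entries,
\[
\varphi_{231,321}(\pi) \coloneqq \pi_2 \pi_1 \pi_3 \cdots \pi_n.
\]
This map clearly toggles whether $\pi_1 > \pi_2$, so by the structural observation it interchanges desarrangements and non-desarrangements in $\mathfrak{S}_n(231, 321)$, provided it preserves membership in the avoidance class. Combined with $s_n(231, 321) = 2^{n-1}$, the conclusion $d_n(231, 321) = 2^{n-2}$ follows immediately.

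The main obstacle is verifying that $\varphi_{231,321}$ does map $\mathfrak{S}_n(231, 321)$ to itself. This reduces to a case analysis on length-3 subsequences of $\varphi_{231,321}(\pi)$ indexed by positions $i_1 < i_2 < i_3$. When $i_1 \geq 3$ the three values are identical to those in $\pi$, so the pattern is unchanged. When exactly one of positions 1 or 2 lies in $\{i_1, i_2, i_3\}$, the value-triple matches that of an appropriate triple in $\pi$ (with positions 1 and 2 exchanged and the remaining positions fixed), so the pattern is inherited from $\pi$. The only nontrivial case is $(i_1, i_2, i_3) = (1, 2, k)$: the swap sends $(\pi_1, \pi_2, \pi_k)$ to $(\pi_2, \pi_1, \pi_k)$, exchanging the pattern pairs $123 \leftrightarrow 213$ and $132 \leftrightarrow 312$. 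Since $\pi$ avoids 231 and 321, its triple at positions $(1, 2, k)$ realizes a pattern in $\{123, 132, 213, 312\}$, and the swap stays within this set; hence $\varphi_{231,321}(\pi)$ also avoids 231 and 321.
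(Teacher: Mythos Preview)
Your proof is correct and uses the same involution as the paper---swapping $\pi_1$ and $\pi_2$---together with the Simion--Schmidt count $s_n(231,321)=2^{n-1}$. The paper's version is slightly more streamlined: it first observes that every $\pi\in\mathfrak{S}_n(231,321)$ satisfies $\pi_1=1$ or $\pi_2=1$, and then invokes Lemma~\ref{l-desav}\,(d) to conclude that the desarrangements are exactly those with $\pi_2=1$, which makes both the characterization and the well-definedness of the swap essentially immediate without the case analysis on length-3 subsequences.
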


\begin{proof}
We know that $s_{n}(231,321)=2^{n-1}$ \cite[Lemma 5\ (a) and Proposition 7]{Simion1985}.
Observe that the permutations $\pi\in\mathfrak{S}_{n}(231,321)$ either
have $\pi_{1}=1$ or $\pi_{2}=1$, and we know from Lemma~\ref{l-desav}~(d)
that the desarrangements are precisely the ones with $\pi_{2}=1$.
Hence, the involution on $\mathfrak{S}_{n}(231,321)$ defined by swapping
$\pi_{1}$ and $\pi_{2}$ is a bijection between desarrangements and
non-desarrangements in $\mathfrak{S}_{n}(231,321)$, so exactly half
of the $2^{n-1}$ permutations in $\mathfrak{S}_{n}(231,321)$ are
desarrangements.
\end{proof}

\begin{thm}
\label{t-312+321}For all $n\geq3$, we have $d_{n}(312,321)=2^{n-3}$.
\end{thm}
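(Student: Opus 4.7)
The plan is to show that every $\pi \in \mathfrak{D}_n(312,321)$ is forced to begin with the prefix $21$, and then to argue that the remaining suffix can be chosen freely from a known avoidance class of size $2^{n-3}$.

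First, I would apply Lemma \ref{l-desav} (c) and (d) simultaneously. Part (d) gives $\pi_2 = 1$, and part (c) gives $\pi_1 = \pi_2 + 1$, so together we obtain $\pi_1 = 2$ and $\pi_2 = 1$. Thus $\pi = 21\rho$, where $\rho = \pi_3 \pi_4 \cdots \pi_n$ is a permutation of $\{3,4,\dots,n\}$. Conversely, for any such $\rho$, the permutation $21\rho$ has descent at position $1$ and ascent at position $2$ (since every letter of $\rho$ exceeds $2$), so its first ascent is even and it is a desarrangement. Hence the map $\pi \mapsto \std(\rho)$ is a bijection from $\mathfrak{D}_n(312,321)$ to a certain subset of $\mathfrak{S}_{n-2}$.

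Next I would verify that the pattern restrictions on $\pi$ translate exactly to the same restrictions on $\std(\rho)$. Since $\pi_2 = 1$ is the smallest letter of $\pi$, no occurrence of $321$ or $312$ can use $\pi_2$ (both patterns require a letter smaller than $1$ somewhere in the occurrence to its right). Since $\pi_1 = 2$ and every later letter is at least $3$, any occurrence of $321$ or $312$ using $\pi_1$ would need a letter smaller than $2$ strictly to its right, and the only such letter is $\pi_2 = 1$, which by the previous sentence cannot appear in such an occurrence. Therefore every occurrence of $312$ or $321$ in $\pi$ lies entirely within $\rho$, so $\pi$ avoids $\{312,321\}$ if and only if $\std(\rho)$ does.

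Finally, by the result of Simion and Schmidt \cite{Simion1985} cited in the proof of Theorem \ref{t-321+231}, we have $s_{n-2}(312,321) = 2^{n-3}$ for $n \geq 3$, and so
\[
d_n(312,321) \;=\; s_{n-2}(312,321) \;=\; 2^{n-3},
\]
as claimed. I do not anticipate a real obstacle here; the only point worth checking carefully is the casework verifying that $\pi_1$ and $\pi_2$ cannot participate in any forbidden occurrence, which is an immediate consequence of their being the two smallest letters placed in the first two positions.
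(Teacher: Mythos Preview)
Your proof is correct and uses exactly the same bijection $\pi \mapsto \std(\rho)$ as the paper, with the bonus that you spell out the converse direction and the pattern-preservation step that the paper leaves implicit. One minor slip: your parenthetical reason why $\pi_2=1$ cannot lie in an occurrence of $312$ is not quite right (the ``$1$'' in $312$ has a \emph{larger} letter to its right, not a smaller one), but the conclusion still holds for the reason you state at the end---with $\pi_1=2$ forced to play the ``$3$'', there is no letter strictly between $1$ and $2$ to play the ``$2$''.
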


\begin{proof}
Since $s_{n}(312,321)=2^{n-1}$ for all $n\geq1$ \cite[Lemma 5 (a)\ and Proposition 7]{Simion1985},
it suffices to give a bijection between $\mathfrak{D}_{n}(312,321)$
and $\mathfrak{S}_{n-2}(312,321)$. From Lemma \ref{l-desav}\ (c)
and (d), we know that any $\pi\in\mathfrak{D}_{n}(312,321)$ is of
the form $\pi=21\tau$ where $\tau$ is a 312- and 321-avoiding permutation
on the letters $\{3,4,\dots,n\}$. So our desired bijection is given
by $\pi\mapsto\std(\tau)$.
\end{proof}

The next five desarrangement avoidance classes are enumerated by the
\textit{Jacobsthal numbers} $J_{n}$, which are defined by the recurrence
relation $J_{n}=J_{n-1}+2J_{n-2}$ for all $n\geq2$ along with initial
conditions $J_{0}=0$ and $J_{1}=1$. Their generating function is
given by
\[
\sum_{n=0}^{\infty}J_{n}x^{n}=\frac{x}{(1+x)(1-2x)},
\]
although it will be more convenient for us to work with the shifted
generating function 
\begin{equation}
1+\sum_{n=1}^{\infty}J_{n-1}x^{n}=\frac{1-x-x^{2}}{(1+x)(1-2x)}.\label{e-Jacobgf}
\end{equation}
The first several Jacobsthal numbers are displayed below:
\begin{center}
\begin{tabular}{c|c|c|c|c|c|c|c|c|c|c|c|c}
$n$ & $0$ & $1$ & $2$ & $3$ & $4$ & $5$ & $6$ & $7$ & $8$ & $9$ & $10$ & $11$\tabularnewline
\hline 
$J_{n}$ & $0$ & $1$ & $1$ & $3$ & $5$ & $11$ & $21$ & $43$ & $85$ & $171$ & $341$ & $683$\tabularnewline
\end{tabular}
\par\end{center}

\noindent See also OEIS entry A001045 \cite{oeis}.
\begin{thm}
\label{t-213+friends}For all $n\geq1$ and $\sigma\in\{123,132,231\}$,
we have $d_{n}(213,\sigma)=J_{n-1}$.
\end{thm}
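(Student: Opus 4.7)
The plan is to use Lemma~\ref{l-desav}(a) to reduce the count of $\mathfrak{D}_n(213, \sigma)$ to a count of $\tau \in \mathfrak{S}_{n-1}(213, \sigma)$ with first ascent at an odd position, and then for each $\sigma$ to establish the first-order recurrence $u_k = 2^{k-1} - u_{k-1}$ (with $u_1 = 1$) which characterizes $J_k$.

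For the reduction: let $\pi \in \mathfrak{D}_n(213, \sigma)$ with $n \geq 2$. Lemma~\ref{l-desav}(a) gives $\pi_1 = n$, so $\pi = n\tau$ with $\tau \in \mathfrak{S}_{n-1}(213, \sigma)$. Position $1$ of $\pi$ is always a descent, and for $i \geq 2$, position $i$ of $\pi$ is an ascent iff position $i-1$ of $\tau$ is. Hence the first ascent of $\pi$ equals $1 + j$ where $j$ is the first ascent of $\tau$, and the desarrangement condition becomes ``$j$ odd.'' Setting
\[
u_k^{(\sigma)} \coloneqq \bigl|\{\tau \in \mathfrak{S}_k(213, \sigma) : \text{first ascent of } \tau \text{ is odd}\}\bigr|,
\]
we obtain $d_n(213, \sigma) = u_{n-1}^{(\sigma)}$. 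From $J_k = (2^k - (-1)^k)/3$ one immediately checks that $J_k + J_{k-1} = 2^{k-1}$, so $J_k$ is characterized by $J_1 = 1$ together with the recurrence $J_k = 2^{k-1} - J_{k-1}$. It therefore suffices to verify this recurrence and initial value for each $u_k^{(\sigma)}$.

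For $\sigma = 231$, I would show that every $\tau \in \mathfrak{S}_k(213, 231)$ satisfies $\tau_1 \in \{1, k\}$: otherwise some entries $\tau_j > \tau_1$ and $\tau_\ell < \tau_1$ to the right of $\tau_1$ would, depending on the relative order of $j$ and $\ell$, form a 231 or 213 pattern with $\tau_1$. Splitting on $\tau_1 = k$ (writing $\tau = k\tau'$ flips the first-ascent parity, contributing $2^{k-2} - u_{k-1}^{(231)}$) and $\tau_1 = 1$ (position $1$ is automatically an ascent, contributing all $2^{k-2}$ such permutations) yields the claimed recurrence. For $\sigma = 123$, I would show that the maximum of $\tau$ sits at position $1$ or $2$: if $\tau_p = k$ with $p \geq 2$, then 123-avoidance forces the prefix $\tau_1 \cdots \tau_{p-1}$ to be decreasing, and any two entries of this prefix together with $\tau_p = k$ form a 213, so $p \leq 2$. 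Splitting on $p = 1$ vs $p = 2$ (the latter automatically places the first ascent at position $1$) and using $|\mathfrak{S}_{k-1}(213, 123)| = 2^{k-2}$ yields the same recurrence.

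For $\sigma = 132$, I would combine the classical factorization $\pi = A n B$ of 132-avoiders (with $\min A > \max B$) with 213-avoidance: a descent within $A$ together with $n$ forms a 213, so $A$ must be increasing. Iterating produces a bijection between $\mathfrak{S}_k(213, 132)$ and integer compositions of $k$, where composition $(\lambda_1, \ldots, \lambda_r)$ corresponds to the permutation whose $i$th increasing run has length $\lambda_i$ and contains the $\lambda_i$ largest values not already placed. Under this bijection, first ascent odd splits cleanly on the size of the first part: if $\lambda_1 \geq 2$ the first ascent is automatically at position $1$ (odd), and if $\lambda_1 = 1$ the first-ascent parity passes recursively to the tail composition of $k-1$, giving the same recurrence. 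The main obstacle across the three cases is establishing the structural characterizations of $\mathfrak{S}_k(213, \sigma)$; the composition bijection in the $\sigma = 132$ case is the most delicate piece, but once it is in hand the counting is a straightforward recursion.
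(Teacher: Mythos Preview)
Your argument is correct, but it does more work than necessary, and the paper's proof is essentially the shortcut you missed. You correctly establish the reduction $d_n(213,\sigma)=u_{n-1}^{(\sigma)}$ via Lemma~\ref{l-desav}(a), where $u_k^{(\sigma)}$ counts $\tau\in\mathfrak{S}_k(213,\sigma)$ with odd first ascent. But ``odd first ascent'' is exactly ``non-desarrangement,'' so $u_k^{(\sigma)}=s_k(213,\sigma)-d_k(213,\sigma)=2^{k-1}-d_k(213,\sigma)$. Combining this with your own identity $d_k(213,\sigma)=u_{k-1}^{(\sigma)}$ immediately yields $u_k^{(\sigma)}=2^{k-1}-u_{k-1}^{(\sigma)}$ uniformly in $\sigma$, with no need for the three separate structural analyses of $\mathfrak{S}_k(213,\sigma)$.

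The paper does precisely this, phrased via generating functions: writing $\pi=n\tau$ with $\tau$ a non-desarrangement gives the functional equation $D=1+x(S-D)$ with $S=(1-x)/(1-2x)$, which is solved to the Jacobsthal generating function. Your composition bijection for $\sigma=132$ and the first-letter dichotomies for $\sigma\in\{123,231\}$ are all valid and pleasant structural facts, but they are not needed here; what your route buys is a more hands-on picture of each avoidance class, at the cost of three arguments where one suffices.
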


\begin{proof}
Given any $\sigma\in\{123,132,231\}$, we have $s_{n}(213,\sigma)=2^{n-1}$
for all $n\geq1$ \cite[Section 3]{Simion1985}, so
\[
S(x;213,\sigma)=1+\sum_{n=1}^{\infty}2^{n-1}x^{n}=\frac{1-x}{1-2x}.
\]
Now, recall from Lemma \ref{l-desav}\ (a) that any 213-avoiding
desarrangement begins with its largest letter. Then any nonempty $\pi\in\mathfrak{D}_{n}(213,\sigma)$
is of the form $\pi=n\tau$ where $\tau$ is a non-desarrangement
in $\mathfrak{S}_{n-1}(213,\sigma)$. Letting $D$ denote the generating
function $D(x;213,\sigma)$, we thus have the functional equation
\[
D=1+x\left(\frac{1-x}{1-2x}-D\right)
\]
which, upon solving, gives us 
\[
D=\frac{1-x-x^{2}}{(1+x)(1-2x)}.
\]
By (\ref{e-Jacobgf}), this is precisely the generating function for
$J_{n-1}$, and we are done.
\end{proof}

\begin{thm}
\label{t-312+friends}For all $n\geq1$ and $\sigma\in\{132,231\}$,
we have $d_{n}(312,\sigma)=J_{n-1}$.
\end{thm}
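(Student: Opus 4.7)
My plan is to prove the theorem for both values of $\sigma$ in parallel by establishing a recursive structure for $\{312,\sigma\}$-avoiding permutations and then translating it into a functional equation for $D(x;312,\sigma)$ that matches (\ref{e-Jacobgf}).

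First, I would prove a structure theorem: for $n\geq 1$ and $\sigma\in\{132,231\}$, every $\pi\in\mathfrak{S}_{n}(312,\sigma)$ can be written uniquely as $\pi=\alpha\,n\,\beta$, where $j\coloneqq|\alpha|\in\{0,1,\ldots,n-1\}$, $\beta$ is a strictly decreasing sequence on the remaining letters, and the standardization $\widetilde\alpha$ of $\alpha$ lies in $\mathfrak{S}_{j}(312,\sigma)$. Here the $\sigma$-avoidance pins down the letter set of $\alpha$ (the $j$ smallest letters if $\sigma=231$, the $j$ largest if $\sigma=132$), while the $312$-avoidance forces the subsequence after $n$ to be decreasing. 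As a sanity check, summing $s_j(312,\sigma)$ over $j=0,\ldots,n-1$ recovers Simion and Schmidt's count $s_n(312,\sigma)=2^{n-1}$.

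Second, I would analyze when $\pi=\alpha\,n\,\beta$ is a desarrangement. Since $\beta$ is strictly decreasing and $\alpha_j<n$, position $j$ is always an ascent of $\pi$ while positions $j+1,\ldots,n-1$ are descents, so the first ascent of $\pi$ occurs at some position $i\leq j$. One then checks that this position is exactly the first ascent of $\widetilde\alpha$, using our convention that the last position of any permutation is an ascent (which is the relevant case when $\widetilde\alpha$ is strictly decreasing). Hence, for $j\geq 1$, $\pi$ is a desarrangement if and only if $\widetilde\alpha$ is, whereas for $j=0$ the permutation $\pi=n(n-1)\cdots 1$ is a desarrangement if and only if $n$ is even.

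Third, writing $D\coloneqq D(x;312,\sigma)$ and $d_n\coloneqq d_n(312,\sigma)$, the preceding case analysis yields the recurrence $d_n=[n\text{ even}]+\sum_{j=1}^{n-1}d_j$ for $n\geq 1$. Multiplying by $x^n$ and summing gives the functional equation
\[
D-1=\frac{x^{2}}{1-x^{2}}+\frac{x}{1-x}(D-1),
\]
which upon solving yields $D=(1-x-x^{2})/((1+x)(1-2x))$. Comparing with (\ref{e-Jacobgf}) gives $d_{n}=J_{n-1}$ for all $n\geq 1$. The main obstacle is the structural step itself: once the decomposition $\pi=\alpha\,n\,\beta$ is in hand and the edge-case bookkeeping around the strictly decreasing $\widetilde\alpha$ is settled, the generating function manipulation is routine.
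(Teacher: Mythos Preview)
Your proposal is correct and follows essentially the same approach as the paper: both decompose $\pi\in\mathfrak{D}_n(312,\sigma)$ as $\tau\,n\,\rho$ with $\rho$ decreasing and $\std(\tau)$ a smaller desarrangement in the same class, handle the $|\tau|=0$ decreasing edge case via parity, and arrive at the identical functional equation $D=\frac{x(D-1)}{1-x}+\frac{1}{1-x^2}$ (your equation is just this minus $1$ on both sides). The only cosmetic difference is that you pass through the explicit recurrence $d_n=[n\text{ even}]+\sum_{j=1}^{n-1}d_j$ before summing, whereas the paper writes the generating-function identity directly.
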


\begin{proof}
Take $\sigma=231$, and let $D$ denote the generating function $D(x;231,312)$.
We claim that $D$ satisfies the functional equation
\begin{equation}
D=\frac{x(D-1)}{1-x}+\frac{1}{1-x^{2}}.\label{e-312fe}
\end{equation}
Observe that any nonempty $\pi\in\mathfrak{D}_{n}(231,312)$ is of
the form $\pi=\tau n\rho$ where $\tau\in\mathfrak{D}_{k}(231,312)$
for some $0\leq k\leq n-1$ and $\rho$ consists of the letters $k+1,k+2,\dots,n-1$
arranged in decreasing order. The case where $k\geq1$ gives the contribution
$x(D-1)/(1-x)$ to (\ref{e-312fe}). If $k=0$, then $\pi$ itself
must be the decreasing permutation $n\cdots21$, but in this case
$\pi$ is a desarrangement only if $n$ is even. Together with the
case where $\pi$ is empty, this gives the contribution $1/(1-x^{2})$
to (\ref{e-312fe}). Solving (\ref{e-312fe}) for $D$ yields 
\[
D=\frac{1-x-x^{2}}{(1+x)(1-2x)}
\]
which, upon comparing with (\ref{e-Jacobgf}), completes the proof
for $\sigma=231$.

As in the proof of Theorem \ref{t-Fine1}, the only difference when
$\sigma=132$ is that the letters of $\rho$ are now smaller than
the letters of $\tau$, but we have the same functional equation and
therefore the same generating function as above.
\end{proof}

We use generating functions to prove the next two results as well.
\begin{thm}
\label{t-123+132}For all $n\geq0$, we have ${\displaystyle d_{n}(123,132)=\frac{2^{n+1}+(7-3n)(-1)^{n}}{9}}.$
\end{thm}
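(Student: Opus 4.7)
The plan is to classify $\{123,132\}$-avoiding desarrangements according to the position of their first ascent, then convert the resulting enumeration into a generating function and extract coefficients via partial fractions. The key structural observation is this: if $\pi$ avoids both $123$ and $132$ and has an ascent $\pi_i<\pi_{i+1}$, then every later letter $\pi_k$ with $k>i+1$ must satisfy $\pi_k<\pi_i$, since otherwise the triple $\pi_i\pi_{i+1}\pi_k$ would standardize to $123$ or $132$.

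First I would separate out the decreasing desarrangement $n\cdots21$, which contributes $1$ exactly when $n$ is even. Otherwise, letting $2k$ denote the first-ascent position (with $k\ge 1$ and $2k+1\le n$) and setting $m\coloneqq\pi_{2k}$, the observation forces the $n-m$ letters exceeding $m$ to occupy only the $2k$ positions $1,\dots,2k-1,2k+1$, so that $m=n-2k$. The decreasing prefix is then determined by which of the $2k$ values in $\{m+1,\dots,n\}$ is chosen to be $\pi_{2k+1}$; this gives $2k$ choices, after which the suffix $\pi_{2k+2}\cdots\pi_n$ is an arbitrary $\{123,132\}$-avoider on $\{1,\dots,n-2k-1\}$. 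A short case analysis (treating each of the three possible positions of a forbidden pattern) confirms that $\pi_{2k+1}$---being preceded only by a strictly decreasing sequence and larger than every letter following it---cannot sit at any position of a $123$ or $132$ occurrence, so no forbidden pattern crosses the boundary at position $2k+1$.

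Using Simion and Schmidt's count $s_n(123,132)=2^{n-1}$ for $n\ge 1$ (with $s_0=1$), this analysis yields
\[
 d_n(123,132) \;=\; [n\text{ even}]+\sum_{\substack{k\ge 1\\2k+1\le n}}2k\,s_{n-2k-1}(123,132),
\]
and therefore
\[
 D(x;123,132)=\frac{1}{1-x^2}+\frac{2x^3}{(1-x^2)^2}\cdot\frac{1-x}{1-2x}=\frac{1-x-2x^2+2x^3}{(1-x)(1+x)^2(1-2x)}.
\]
A partial-fraction decomposition, with a simple pole at $x=\tfrac12$ and a double pole at $x=-1$, produces the explicit coefficients $(2^{n+1}+(7-3n)(-1)^n)/9$. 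The main obstacle is the structural step---verifying that $\pi_{2k+1}$ is ``inert'' with respect to $123$ and $132$ so that the decomposition is tight; once that is in hand, the remaining computation is routine generating function algebra.
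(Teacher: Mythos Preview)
Your argument is correct, and it reaches the same generating function
\[
D(x;123,132)=\frac{1-2x^2}{(1-2x)(1+x)^2}
\]
(after cancelling the factor $1-x$ from your displayed rational function), but it gets there by a genuinely different decomposition than the one in the paper. The paper pivots on the position of the letter~$n$: writing $\pi=\tau\,n\,\rho$ with $\tau$ a decreasing prefix of even length and $\rho$ an arbitrary $\{123,132\}$-avoider, it obtains the functional equation $D=1+x(S-D)+x^{3}S/(1-x^{2})$, which is recursive in $D$. You instead pivot on the position $2k$ of the first ascent; your key observation forces $\pi_{2k}=n-2k$ and lets $\pi_{2k+1}$ range freely over the $2k$ values above it, after which the suffix is an arbitrary $\{123,132\}$-avoider on the small letters. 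This yields the explicit non-recursive formula $d_n=[n\text{ even}]+\sum_{k}2k\,s_{n-2k-1}$, which is a pleasant combinatorial refinement not visible in the paper's approach. One small point: your ``inert'' argument handles only those forbidden patterns that pass through $\pi_{2k+1}$; you should also remark (it is immediate from the value separation you already established) that no $123$ or $132$ can use letters from both $\{\pi_1,\dots,\pi_{2k}\}$ and $\{\pi_{2k+2},\dots,\pi_n\}$, since the leftmost letter of such a subsequence cannot be its minimum.
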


\begin{proof}
For convenience, let $D\coloneqq D(x;123,132)$ and $S\coloneqq S(x;123,132)$.
We first claim that 
\begin{equation}
D=1+x(S-D)+\frac{x^{3}S}{1-x^{2}}.\label{e-123+132fe}
\end{equation}
To see this, first observe that the empty permutation in $\mathfrak{D}_{n}(123,132)$
contributes 1 to the right-hand side of (\ref{e-123+132fe}). If $\pi\in\mathfrak{D}_{n}(123,132)$
is nonempty, then $\pi=\tau n\rho$ where $\tau$ is a decreasing
sequence of even length and $\rho\in\mathfrak{S}_{k}(123,132)$ for
some $0\leq k\leq n-1$. The case where $\tau$ is empty (and $\pi$
is not) gives the contribution $x(S-D)$, as $\rho$ would need to
be a non-desarrangement. Lastly, if $\tau$ is nonempty, then $\rho$
may be either a desarrangement or a non-desarrangement, so this case
contributes $x^{3}S/(1-x^{2})$ to (\ref{e-123+132fe}).

Since $s_{n}(123,132)=2^{n-1}$ for all $n\geq1$ \cite[Proposition 7]{Simion1985},
we have
\[
S=1+\frac{x}{1-2x}.
\]
Substituting this equation into (\ref{e-123+132fe}) and solving for
$D$ yields 
\[
D=\frac{1-2x^{2}}{(1-2x)(1+x)^{2}}.
\]
The sequence with this generating function is given by OEIS entry
A113954 \cite{oeis}, which provides the desired formula for $d_{n}(123,132)$.
\end{proof}

\begin{thm}
\label{t-123+231}For all $n\geq0$, we have ${\displaystyle d_{n}(123,231)=\frac{2n(n-1)+(5-2n)(-1)^{n}+3}{8}}.$
\end{thm}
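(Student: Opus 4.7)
The plan is to follow the generating-function strategy of Theorem~\ref{t-123+132}: decompose each nonempty $\pi\in\mathfrak{S}_n(123,231)$ as $\pi=\tau n\rho$ according to the position of $n$, derive a functional equation for $D\coloneqq D(x;123,231)$ in terms of $S\coloneqq S(x;123,231)$, solve it, and then identify the resulting rational generating function in the OEIS.

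The key structural observation is that 231-avoidance forces every letter of $\tau$ to be smaller than every letter of $\rho$, so (with $|\tau|=i-1$) $\tau$ is a permutation of $\{1,\dots,i-1\}$ and $\rho$ is a permutation of $\{i,\dots,n-1\}$. Combined with 123-avoidance, $\tau$ must be decreasing; and if $\tau$ is nonempty, then $\rho$ must be decreasing as well, since otherwise any letter of $\tau$ together with an ascent in $\rho$ produces a 123 pattern. If $\tau$ is empty, then $\rho$ may be any element of $\mathfrak{S}_{n-1}(123,231)$. Next I track the desarrangement condition in each case. When $\tau$ is empty, $\pi=n\rho$, position $1$ is always a descent, and the first ascent of $\pi$ equals one more than the first ascent of $\rho$, so $\pi$ is a desarrangement iff $\rho$ is not, giving the contribution $x(S-D)$. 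When $\tau$ is nonempty, $\pi=(i-1)(i-2)\cdots 1\,n\,(n-1)\cdots i$ and the first ascent of $\pi$ sits at position $i-1$ (since the last letter of $\tau$ is $1$ and the next letter is $n$), so $\pi$ is a desarrangement iff $i$ is odd. Summing $x^{i+j}$ over $i\in\{3,5,7,\dots\}$ and $j\geq 0$ yields the Case~3 contribution $x^3/((1-x)^2(1+x))$.

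Assembling these pieces gives the functional equation
\[
D=1+x(S-D)+\frac{x^3}{(1-x)^2(1+x)}.
\]
An analogous decomposition, ignoring the parity condition, produces $S=(1-2x+2x^2)/(1-x)^3$, in agreement with the classical formula $s_n(123,231)=\binom{n}{2}+1$ of Simion and Schmidt~\cite{Simion1985}. Substituting and solving for $D$ yields
\[
D=\frac{1-x-x^2+3x^3}{(1+x)^2(1-x)^3},
\]
which is the generating function of OEIS entry A130404~\cite{oeis} and recovers the closed form in the statement (a routine partial fractions decomposition with poles at $\pm 1$ reproduces it directly if desired). The most delicate step is the first-ascent analysis in the nonempty-$\tau$ case, with the edge case $i=2$ (where $\pi=1n\rho$ has its first ascent at position $1$) needing explicit verification that it is correctly excluded; the remaining steps amount to straightforward algebraic manipulation.
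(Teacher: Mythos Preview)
Your proof is correct and follows essentially the same approach as the paper: the same decomposition $\pi=\tau n\rho$, the same functional equation (your $\frac{x^3}{(1-x)^2(1+x)}$ is the paper's $\frac{x^3}{(1-x^2)(1-x)}$), and the same resulting rational generating function. You supply more explicit justification for the structural claims (why $\tau$ and $\rho$ must be decreasing when $\tau\neq\varnothing$, and the first-ascent analysis) and you rederive $S$ from the decomposition rather than citing Simion--Schmidt, but the argument is otherwise identical.
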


\begin{proof}
First, the generating functions $D\coloneqq D(x;123,231)$ and $S\coloneqq S(x;123,231)$
satisfy the equation 
\begin{equation}
D=1+x(S-D)+\frac{x^{3}}{(1-x^{2})(1-x)}.\label{e-123+231fe}
\end{equation}
The reasoning is similar to that of (\ref{e-123+132fe}), with the
only difference being that if $\pi\in\mathfrak{D}_{n}(123,231)$ does
not begin with its largest letter $n$, the letters appearing after
$n$ must form a decreasing sequence in order for $\pi$ to avoid
123. Therefore, the term $x^{3}S/(1-x^{2})$ in (\ref{e-123+132fe})
is replaced by $x^{3}/(1-x^{2})(1-x)$.

Next, we have $s_{n}(123,231)={n \choose 2}+1$ for all $n\geq0$
\cite[Lemma 5\ (e) and Proposition 11]{Simion1985}, which has the
generating function 
\begin{equation}
S=\frac{1-2x+2x^{2}}{(1-x)^{3}}\label{e-123+231gf}
\end{equation}
\cite[A152947]{oeis}. Substituting (\ref{e-123+231gf}) into (\ref{e-123+231fe})
and solving for $D$ yields
\[
D=\frac{1-x-x^{2}+3x^{3}}{(1-x)^{2}(1+x)(1-x^{2})},
\]
whose coefficients are given by the desired formula \cite[A130404]{oeis}.
\end{proof}

\begin{thm}
\label{t-123+312}For all $n\geq0$, we have $d_{n}(123,312)={\displaystyle \left\lceil \frac{(n-1)^{2}}{4}\right\rceil }$.
\end{thm}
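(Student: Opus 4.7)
The plan is to use Lemma~\ref{l-desav}(c) together with the 123- and 312-avoidance conditions to pin down the structure of an arbitrary $\pi \in \mathfrak{D}_n(123,312)$ explicitly, and then count directly by summing over the possible first letter. I would assume $n \geq 3$, since the formula is easily verified for $n \leq 2$.

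First I would apply Lemma~\ref{l-desav}(c) to write $\pi = (k+1)\,k\,\tau$ for some $k \in [n-1]$, where $\tau$ is a sequence on $[n] \setminus \{k, k+1\}$. Call a letter of $\tau$ \emph{small} if it is less than $k$ and \emph{large} if it exceeds $k+1$; set $a = k-1$ and $b = n-k-1$. Because $\pi_1 = k+1$ and $\pi_2 = k$ form a decreasing prefix, any 123 pattern in $\pi$ using $\pi_1$ or $\pi_2$ as its smallest entry would produce an increasing pair of large letters in $\tau$, and any 312 pattern using them as its largest entry would produce an increasing pair of small letters in $\tau$. Thus 123- and 312-avoidance jointly force both the small letters and the large letters to appear in decreasing order within $\tau$.

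Under that constraint, a brief case analysis on the small/large types of the three pattern positions in $\tau$ shows two things: that $\tau$ automatically avoids 123 (since every ascending pair in $\tau$ must have a small first element and a large second element, and no third letter of $\tau$ can continue upward past the large), and that $\tau$ contains 312 if and only if some small letter of $\tau$ lies between two large letters. Consequently the large letters of $\tau$ occupy a contiguous block, so
\[
\tau = s_1 s_2 \cdots s_s\; L_1 L_2 \cdots L_b\; s_{s+1} s_{s+2} \cdots s_a,
\]
where $s_1, \dots, s_a = k-1, k-2, \dots, 1$ and $L_1, \dots, L_b = n, n-1, \dots, k+2$, and $s \in \{0, 1, \dots, a\}$ is a free parameter (when $b \geq 1$); when $b = 0$ the only candidate is the fully decreasing permutation $n(n-1)\cdots 1$.

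Finally I would impose the desarrangement condition. For $b \geq 1$, the first ascent of $\pi$ is at position $2$ when $s = 0$ and at position $s+2$ when $s \geq 1$, so the admissible $s$ are the even elements of $\{0, 1, \dots, k-1\}$, giving $1 + \lfloor (k-1)/2 \rfloor$ options. For $b = 0$ (i.e., $k = n-1$), the permutation $n(n-1)\cdots 1$ is a desarrangement if and only if $n$ is even. Summing,
\[
d_n(123,312) = \sum_{k=1}^{n-2}\left(1 + \left\lfloor \tfrac{k-1}{2}\right\rfloor\right) + [n \text{ even}],
\]
which simplifies to $\lceil (n-1)^2/4\rceil$ via the elementary identity $\sum_{j=0}^{m}\lfloor j/2\rfloor = \lfloor m^2/4\rfloor$ together with a short parity split on $n$. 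I expect the main obstacle to be the structural step: verifying carefully that, under the two monotonicity constraints on small and large letters, a large-small-large triple is the only remaining obstruction to 312-avoidance in $\tau$ (and that 123-avoidance in $\tau$ comes for free). Once that is settled, the desarrangement check and the final summation are routine.
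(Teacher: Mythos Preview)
Your argument is correct, and it ultimately uncovers the same structural characterization the paper uses, but the two proofs are organized quite differently. The paper decomposes around the letter $n$, writing $\pi=\tau\,n\,\rho$, and observes (without much detail) that $\tau$ must be a decreasing \emph{interval} of even length and $\rho$ must be decreasing; the count then becomes $\sum_{k\geq1}(n-2k)$ over even interval lengths $2k$, plus an even-$n$ correction. You instead anchor at the first two letters via Lemma~\ref{l-desav}(c), classify the remaining letters as small or large, and carry out a case analysis to show the large letters form a contiguous decreasing block surrounded by decreasing small letters; your count is $\sum_{k=1}^{n-2}\bigl(1+\lfloor(k-1)/2\rfloor\bigr)+[n\text{ even}]$, indexed by the value of $\pi_1$. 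The two parametrizations are related by the change of variables $(\text{paper's }\tau_1,2k)=(\pi_1,s+2)$, and the final sums agree. The paper's route is shorter because decomposing at $n$ makes the interval property of the prefix fall out in one line (any ``gap'' letter would sit in $\rho$ and create a $312$ with the two letters of $\tau$ flanking the gap), whereas your small/large analysis does more bookkeeping to reach the same conclusion; on the other hand, your write-up actually verifies the structural claim that the paper only asserts.
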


\begin{proof}
The case $n=0$ is trivial; we assume $n\geq1$ for our proof. An
\textit{interval} of a permutation $\pi$ is a consecutive subsequence
of $\pi$ consisting of consecutive integer letters. For example,
$132$ and $54$ are two of the intervals of $6132754$. Notice that
any nonempty $\pi\in\mathfrak{D}_{n}(123,312)$ can be uniquely decomposed
as $\pi=\tau n\rho$, where $\tau$ is a decreasing interval of even
length and $\rho$ is decreasing, and $\tau$ can be empty if and
only if $n$ is even. Since the choice of $\tau$ completely determines
the permutation $\pi$, it suffices to count the choices for $\tau$.

When $n$ is odd, the length of $\tau$ can be any positive even number
$2k$ up to $n-1$. Assuming $\left|\tau\right|=2k$, there are $n-2k$
choices for the first letter $\tau_{1}$ of $\tau$. Since $\tau$
is a decreasing interval, the choice of $\tau_{1}$ completely determines
$\tau$, so there are there are $\sum_{k=1}^{\left\lfloor (n-1)/2\right\rfloor }(n-2k)$
choices for $\tau$. The same is true when $n$ is even, except that
we may also take $\tau$ to be empty. Hence, we have 
\[
d_{n}(123,312)=\begin{cases}
{\displaystyle \sum_{k=1}^{\left\lfloor (n-1)/2\right\rfloor }(n-2k)},\vphantom{{\displaystyle \frac{\frac{dy_{a}}{dx_{a_{asdf}}}}{\frac{dy_{a_{asdf}}}{dx_{a}}}}} & \text{if }n\text{ is odd,}\\
{\displaystyle 1+\sum_{k=1}^{\left\lfloor (n-1)/2\right\rfloor }(n-2k)},\vphantom{{\displaystyle \frac{\frac{dy_{a}}{dx_{a_{asdf}}}}{\frac{dy_{a_{asdf}}}{dx_{a}}}}} & \text{if }n\text{ is even,}
\end{cases}
\]
and a routine argument shows that this is equal to $\left\lceil (n-1)^{2}/4\right\rceil $
for all $n\geq0$.
\end{proof}

\subsection{Triple restrictions}

Finally, we proceed to the desarrangement avoidance classes $\mathfrak{D}_{n}(\Pi)$
where $\Pi$ contains three patterns of length 3. These are the last
nontrivial desarrangement avoidance classes; all $\mathfrak{D}_{n}(\Pi)$
with $\Pi\subseteq\mathfrak{S}_{3}$ and $\left|\Pi\right|\geq4$
are covered by Propositions \ref{p-trivf}\textendash \ref{p-trivl}.
\begin{thm}
\label{t-123+132+231}For all $n\geq1$, we have $d_{n}(123,132,231)=\begin{cases}
n-1, & \text{if }n\text{ is odd,}\\
1, & \text{if }n\text{ is even.}
\end{cases}$
\end{thm}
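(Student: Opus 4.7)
My plan is to combine the structural consequences of being a desarrangement with the pattern-avoidance constraints, much as in the proof of Proposition \ref{p-trivl}. Let $\pi \in \mathfrak{D}_n(123, 132, 231)$. By Lemma \ref{l-desav}(b), since $\pi$ avoids 231, the letter $1$ is located at the first ascent of $\pi$. Moreover, since $\pi$ avoids both $123$ and $132$, at most one letter can appear to the right of $1$ in $\pi$, so the letter $1$ must occupy position $n-1$ or position $n$. I would then split on which of these holds.

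First I would handle the case $\pi_n = 1$. Since $1$ is at the first ascent, positions $1, 2, \ldots, n-1$ must all be descents, forcing $\pi = n(n-1)\cdots 21$. The first ascent of this decreasing permutation is $n$, so $\pi$ is a desarrangement precisely when $n$ is even. Thus this case contributes $1$ to $d_n(123, 132, 231)$ when $n$ is even, and $0$ when $n$ is odd.

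Next I would handle the case $\pi_{n-1} = 1$. Because $1$ must be at the first ascent, we need $\pi_1 > \pi_2 > \cdots > \pi_{n-1} = 1 < \pi_n$, so the first ascent occurs at position $n-1$. For $\pi$ to be a desarrangement, $n-1$ must be even, so this case arises only when $n$ is odd. In this setting, $\pi_n$ may be any element of $\{2, 3, \ldots, n\}$, and this choice determines $\pi_1, \ldots, \pi_{n-2}$ (the remaining letters of $\{2, \ldots, n\}$ listed in decreasing order), giving $n-1$ candidates. One then verifies that each such $\pi$ avoids $123$, $132$, and $231$; this follows readily because $\pi_1 \cdots \pi_{n-2}$ is a decreasing prefix, and the trailing letters $1$ and $\pi_n$ are too few to create any of these patterns together with earlier (strictly larger) letters.

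The step that warrants the most care is checking that the $n-1$ candidate permutations arising in the $\pi_{n-1} = 1$ case genuinely avoid all three patterns, but this is routine given the explicit decreasing structure of $\pi_1 \cdots \pi_{n-1}$. Summing the contributions from the two cases then yields $d_n(123, 132, 231) = n - 1$ when $n$ is odd and $d_n(123, 132, 231) = 1$ when $n$ is even, as claimed.
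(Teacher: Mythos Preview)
Your proposal is correct and follows essentially the same approach as the paper's proof, which simply points back to the argument for Proposition~\ref{p-trivl} and notes that when $n$ is odd the final letter $\pi_n$ is now unconstrained among the $n-1$ available values. Your write-up is more explicit (in particular, you spell out the verification that each candidate avoids all three patterns), but the underlying decomposition into the cases $\pi_n=1$ and $\pi_{n-1}=1$ via Lemma~\ref{l-desav}(b) and the $123/132$ constraint is identical.
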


\begin{proof}
This is essentially the same as the proof of Proposition \ref{p-trivl}.
The only difference is that, when $n$ is odd, $\pi_{n}$ can be any
of the letters $1,2,\dots,n-1$, since we are no longer avoiding 213
or 312.
\end{proof}

Next, we have five desarrangement avoidance classes whose enumeration
is given by $\left\lfloor n/2\right\rfloor $.
\begin{thm}
\label{t-123+312+friends}For all $n\geq1$ and $\sigma\in\{132,231\}$,
we have $d_{n}(123,312,\sigma)={\displaystyle \left\lfloor \frac{n}{2}\right\rfloor }$.
\end{thm}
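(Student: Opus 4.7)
My plan is to build directly on the structural analysis used in the proof of Theorem \ref{t-123+312}. That proof shows every nonempty $\pi \in \mathfrak{D}_n(123,312)$ decomposes uniquely as $\pi = \tau n \rho$, where $\tau = (t+2k)(t+2k-1)\cdots(t+1)$ is a decreasing interval of even length $2k$ (with $t \geq 0$) and $\rho$ is the decreasing arrangement of $[n-1] \setminus \tau = \{1,\dots,t\} \cup \{t+2k+1,\dots,n-1\}$. Moreover, $\tau$ may be empty only when $n$ is even, in which case $\pi = n(n-1)\cdots 1$. I would impose the additional avoidance of $\sigma \in \{132,231\}$ on this description and count the surviving desarrangements.

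The key observation for both cases is that every ascent pair $(\pi_i,\pi_j)$ of $\pi$ (i.e., with $i<j$ and $\pi_i<\pi_j$) must have $\pi_i \in \tau$: since $\tau$ and $\rho$ are individually decreasing and $n$ is the largest letter, no ascent pair can lie entirely in $\tau$, entirely in $\rho$, or begin with $n$. For $\sigma = 132$, I would argue that a 132 pattern $\pi_i\pi_j\pi_k$ arises precisely when some letter of $\rho$ strictly exceeds $\min \tau = t+1$; this happens iff the upper block $\{t+2k+1,\dots,n-1\}$ is nonempty, so avoiding 132 forces $t+2k = n-1$. For $\sigma = 231$, I would argue that a 231 pattern arises precisely when $\rho$ contains a letter strictly smaller than some letter of $\tau$; this happens iff the lower block $\{1,\dots,t\}$ is nonempty, so avoiding 231 forces $t = 0$. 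In either case, the surviving nonempty-$\tau$ permutations are parameterized by $k \in \{1,2,\dots,\lfloor (n-1)/2 \rfloor\}$, yielding $\tau = (n-1)(n-2)\cdots(n-2k)$ in the first case and $\tau = (2k)(2k-1)\cdots 1$ in the second, with $\rho$ determined accordingly.

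Combining this with the lone decreasing permutation that arises when $n$ is even gives a total of $\lfloor (n-1)/2 \rfloor + [n \text{ even}]$, which simplifies to $\lfloor n/2 \rfloor$ in both parities. The main obstacle is the careful verification that no stray 132 or 231 patterns remain once the constraint on $t$ is imposed; for instance, in the $\sigma = 231$ case with $t=0$, I must check that no 231 pattern arises from an ascent pair of the form $(\tau_i,\rho_j)$ with $\tau_i<\rho_j$, and this follows because every letter of $\rho = \{2k+1,\dots,n-1\}$ then exceeds every letter of $\tau = \{1,\dots,2k\}$, so no letter appearing after such an ascent pair can be smaller than $\tau_i$. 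I expect the dual verification for $\sigma = 132$ to be entirely analogous.
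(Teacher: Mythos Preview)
Your proof is correct and arrives at the same structural characterization as the paper: for $\sigma=132$ the surviving permutations are $\pi=(n-1)(n-2)\cdots(n-2k)\,n\,(n-2k-1)\cdots 21$, and for $\sigma=231$ they are $\pi=(2k)(2k-1)\cdots 1\,n\,(n-1)\cdots(2k+1)$, together with the decreasing permutation when $n$ is even. The paper's proof simply asserts this structure directly (in one sentence per case), whereas you derive it by starting from the decomposition in Theorem~\ref{t-123+312} and then carefully analyzing which values of the parameter $t$ survive the additional $\sigma$-avoidance constraint. Your route is slightly longer but more systematic, and your verification that no residual $132$ or $231$ patterns remain is exactly the kind of detail the paper leaves implicit; both approaches yield the same count $\lfloor (n-1)/2\rfloor + [n\text{ even}] = \lfloor n/2\rfloor$.
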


\begin{proof}
Take $\sigma=132$. Observe that any $\pi\in\mathfrak{D}_{n}(123,132,312)$
is of the form $\pi=\tau n\rho$ where $\tau=(n-1)(n-2)\cdots(n-k)$
and $\rho=(n-k-1)\cdots21$ for some even $k\in[n]$. Thus, $\pi$
is completely determined by the choice of $k$. Since there are $\left\lfloor n/2\right\rfloor $
choices for $k$, the result follows for $\sigma=132$. The case of
$\sigma=231$ is similar; the only difference is that the letters
of $\tau$ are smaller than the letters of $\rho$.
\end{proof}

\begin{thm}
\label{t-213+231+friends}For all $n\geq1$ and $\sigma\in\{123,132\}$,
we have $d_{n}(213,231,\sigma)={\displaystyle \left\lfloor \frac{n}{2}\right\rfloor }$.
\end{thm}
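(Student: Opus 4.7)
The plan is to pin down the shape of every $\pi \in \mathfrak{D}_n(213, 231, \sigma)$ so tightly that it is determined by a single parameter, namely the position $i$ of the letter $1$, and then to count the admissible values of $i$.

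First I would apply Lemma \ref{l-desav}(a) to conclude $\pi_1 = n$, and Lemma \ref{l-desav}(b) to place the letter $1$ at the first ascent; the defining property of a desarrangement then forces this position $i$ to be even. Because there is no ascent before position $i$, the prefix $\pi_1 \pi_2 \cdots \pi_i = n\, a_2 a_3 \cdots a_{i-1}\, 1$ is strictly decreasing. Writing the suffix as $b_1 b_2 \cdots b_{n-i}$, I would next observe that for any $j \geq 2$ and any $k$, the triple $a_j, 1, b_k$ standardizes to $213$ whenever $b_k > a_j$, so 213-avoidance forces every $b_k$ to be smaller than every $a_j$. This already pins down $\{a_2, \ldots, a_{i-1}\}$ and $\{b_1, \ldots, b_{n-i}\}$ as sets: the $i-2$ largest and the $n-i$ smallest elements of $\{2, \ldots, n-1\}$, respectively. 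To fix the internal order of the $b$'s I would use $\sigma$: if $\sigma = 123$, then $1, b_k, b_l$ (with $k < l$) is an occurrence of $123$ whenever $b_k < b_l$, forcing the $b$'s to decrease; if $\sigma = 132$, the same triple is an occurrence of $132$ whenever $b_k > b_l$, forcing the $b$'s to increase.

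At this point $\pi$ is completely determined by $i$, and a routine case check (similar in spirit to the one just used for 213) confirms that the resulting permutation really does avoid $213$, $231$, and $\sigma$. Finally I would count: every even $i$ with $2 \leq i \leq n-1$ yields a valid desarrangement (the first ascent genuinely sits at $i$ since $\pi_i = 1 < b_1$), while $i = n$ corresponds to the fully decreasing permutation $n (n-1) \cdots 1$, which is a desarrangement precisely when $n$ is even. Tallying the admissible even values of $i$ produces $\lfloor n/2 \rfloor$ in both parities of $n$. The main obstacle is the forcing step, particularly verifying that the $a$/$b$ separation together with the monotonicity of the $b$'s is by itself enough to guarantee automatic 231-avoidance (no 231 occurrence involving the ascent at position $i$ can survive once all the $b$'s lie below all the $a$'s); once this verification is in hand, the enumeration is immediate.
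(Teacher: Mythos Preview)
Your argument is correct, but it proceeds quite differently from the paper's. The paper gives a short inductive proof: by Lemma~\ref{l-desav}\,(a), removing the leading $n$ from any $\pi\in\mathfrak{D}_{n+1}(213,231,\sigma)$ yields a non-desarrangement in $\mathfrak{S}_{n}(213,231,\sigma)$, so $d_{n+1}(213,231,\sigma)=s_{n}(213,231,\sigma)-d_{n}(213,231,\sigma)=n-\lfloor n/2\rfloor=\lfloor(n+1)/2\rfloor$, invoking the Simion--Schmidt count $s_{n}(213,231,\sigma)=n$. Your approach is instead a direct structural characterization: you use Lemmas~\ref{l-desav}\,(a) and~(b) together with 213-avoidance to force the value-sets of the prefix and suffix, then use $\sigma$ to pin down the suffix order, reducing everything to the single even parameter $i$ (the position of $1$). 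The paper's route is shorter but leans on an external enumeration result; yours is self-contained and, as a bonus, produces an explicit list of the permutations in $\mathfrak{D}_{n}(213,231,\sigma)$, which the inductive argument does not. One small point: your outline tacitly assumes $n\geq 2$ when invoking Lemma~\ref{l-desav}, so you should dispose of $n=1$ separately (trivially, $d_{1}=0=\lfloor 1/2\rfloor$).
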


\begin{proof}
We proceed via induction. The base case ($n=1$) is trivial, so let
us assume that $d_{n}(213,231,\sigma)=\left\lfloor n/2\right\rfloor $
for some $n\geq1$. By Lemma \ref{l-desav}\ (a), we can write each
$\pi\in\mathfrak{D}_{n+1}(213,231,\sigma)$ in the form $\pi=(n+1)\tau$
where $\tau$ is a non-desarrangement in $\mathfrak{S}_{n}(213,231,\sigma)$.
It has already been established (for both $\sigma=123$ and $\sigma=132$)
that $s_{n}(213,231,\sigma)=n$ \cite[Section 4]{Simion1985}, so
\begin{align*}
d_{n+1}(213,231,\sigma) & =n-d_{n}(213,231,\sigma)=n-{\displaystyle \left\lfloor \frac{n}{2}\right\rfloor }={\displaystyle \left\lfloor \frac{n+1}{2}\right\rfloor }
\end{align*}
as desired.
\end{proof}

\begin{thm}
\label{t-132+231+312}For all $n\geq1$, we have $d_{n}(132,231,312)={\displaystyle \left\lfloor \frac{n}{2}\right\rfloor }$.
\end{thm}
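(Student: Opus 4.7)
The plan is to give a complete structural description of $\mathfrak{S}_n(132,231,312)$ and then pick out the desarrangements by inspecting first ascents. My claim is that
\[
\mathfrak{S}_n(132,231,312) = \{\pi^{(k)} : k \in [n]\}, \qquad \pi^{(k)} \coloneqq k(k-1)\cdots 21(k+1)(k+2)\cdots n,
\]
an explicit list of $n$ permutations. To establish this, I would induct on $n$. The key observation is that if $\pi \in \mathfrak{S}_n(132,231,312)$ with $n \geq 2$ and the letter $n$ sits at some interior position $i$ (i.e.\ $1 < i < n$), then $\pi_{i-1} n \pi_{i+1}$ is an occurrence of $132$ when $\pi_{i-1} < \pi_{i+1}$ and of $231$ when $\pi_{i-1} > \pi_{i+1}$. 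So $n$ must lie at position $1$ or position $n$. If $n = \pi_1$ and the suffix $\pi_2 \pi_3 \cdots \pi_n$ contains any ascent $\pi_j < \pi_{j+1}$, then $n \pi_j \pi_{j+1}$ is an occurrence of $312$; hence $\pi = \pi^{(n)}$ is the pure decreasing permutation. Otherwise $\pi_n = n$, and stripping off the last letter returns a permutation in $\mathfrak{S}_{n-1}(132,231,312)$, which by induction is some $\pi^{(k)}$ with $k \in [n-1]$; re-appending $n$ yields $\pi = \pi^{(k)}$ as an element of $\mathfrak{S}_n(132,231,312)$.

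Once this list is in hand, the count is immediate. For $\pi^{(k)}$, the positions $1, 2, \ldots, k-1$ are descents and all remaining positions are ascents, so the first ascent of $\pi^{(k)}$ occurs at position $k$ (here $\pi^{(n)}$'s first ascent is at the final position $n$, which is an ascent by the paper's convention). Therefore $\pi^{(k)} \in \mathfrak{D}_n$ if and only if $k$ is even, and the number of such $k$ in $[n]$ is exactly $\lfloor n/2 \rfloor$.

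The argument is a direct structural analysis and requires no heavier machinery. The only minor subtlety is that the pure decreasing permutation $\pi^{(n)}$ is a desarrangement exactly when $n$ is even, but this is absorbed by the convention that the final position is always an ascent, letting it fit uniformly into the statement ``$\pi^{(k)}$ is a desarrangement iff $k$ is even''. As a sanity check, the same description recovers the known count $s_n(132,231,312) = n$ used implicitly in the parallel proof of Theorem~\ref{t-213+231+friends}.
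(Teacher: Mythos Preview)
Your proof is correct. It rests on the same structural observation as the paper's: in any $\pi\in\mathfrak{S}_n(132,231,312)$ the letter $n$ must sit at an end, and if it sits first then $\pi$ is the decreasing permutation. The packaging differs slightly. You push this observation all the way to an explicit enumeration $\mathfrak{S}_n(132,231,312)=\{\pi^{(k)}:k\in[n]\}$ and then read off the desarrangements in one stroke; the paper instead inducts directly on $d_n(132,231,312)$, using the dichotomy ``$\pi=\tau n$ with $\tau\in\mathfrak{D}_{n-1}(132,231,312)$'' versus ``$\pi=n\cdots21$ and $n$ even'' to obtain the recurrence $d_n=d_{n-1}+[n\text{ even}]$. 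Your version has the small bonus of recovering $s_n(132,231,312)=n$ explicitly and avoiding a separate parity case split; the paper's version is marginally shorter since it never writes down the full list.
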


\begin{proof}
Again, we use induction, with the base case ($n=1$) being trivial.
Assume $d_{n-1}(132,231,312)=\left\lfloor (n-1)/2\right\rfloor $
for some $n\geq2$. Any $\pi\in\mathfrak{S}_{n}$ avoiding both 132
and 231 must either begin or end with $n$, and if $\pi$ avoids 312
as well, then $\pi$ cannot begin with $n$ unless $\pi$ is the decreasing
permutation $\pi=n\cdots21$. Thus, if $\pi\in\mathfrak{D}_{n}(132,231,312)$,
then either $\pi=\tau n$ where $\tau\in\mathfrak{D}_{n-1}(132,231,312)$,
or $\pi=n\cdots21$ (which can only be the case if $n$ is even).
It follows that
\begin{align*}
d_{n}(132,231,312) & =\begin{cases}
{\displaystyle \left\lfloor \frac{n-1}{2}\right\rfloor },\vphantom{{\displaystyle \frac{\frac{dy_{a}}{dx_{a}}}{\frac{dy_{a}}{dx_{a}}}}} & \text{if }n\text{ is odd,}\\
{\displaystyle 1+\left\lfloor \frac{n-1}{2}\right\rfloor },\vphantom{{\displaystyle \frac{\frac{dy_{a}}{dx_{a}}}{\frac{dy_{a}}{dx_{a}}}}} & \text{if }n\text{ is even,}
\end{cases}\\
 & =\left\lfloor \frac{n}{2}\right\rfloor ,
\end{align*}
which completes the proof.
\end{proof}

Lastly, we have two desarrangement avoidance classes which are enumerated
by the \textit{Fibonacci numbers} $f_{n}$, defined by $f_{n}=f_{n-1}+f_{n-2}$
for all $n\geq2$ and with initial terms $f_{0}=0$ and $f_{1}=1$
\cite[A000045]{oeis}. Our proofs of both results are combinatorial.
\begin{thm}
\label{t-123+132+213}For all $n\geq1$, we have $d_{n}(123,132,213)=f_{n-1}$.
\end{thm}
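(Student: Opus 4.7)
My starting point will be Lemma \ref{l-desav}\ (a), which forces $\pi_1 = n$ for every $\pi \in \mathfrak{D}_n(123,132,213)$. Writing $\pi = n\tau$ with $\tau \in \mathfrak{S}_{n-1}(123,132,213)$, I will determine when this prefix-$n$ extension yields a desarrangement. Since $n$ is the maximum letter, position $1$ of $\pi$ is always a descent, and for $i \geq 2$, position $i$ of $\pi$ is an ascent if and only if position $i-1$ of $\tau$ is an ascent. Hence the first ascent of $\pi$ equals one plus the first ascent of $\tau$, so $\pi$ is a desarrangement if and only if $\tau$ is \emph{not} a desarrangement. This yields the recurrence
\[
d_n(123,132,213) = s_{n-1}(123,132,213) - d_{n-1}(123,132,213).
\]

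Next, I will compute $s_n(123,132,213)$. I claim that $s_n(123,132,213) = f_{n+1}$; this result is due to Simion and Schmidt \cite{Simion1985}, but can also be derived directly. Given $\pi \in \mathfrak{S}_n(123,132,213)$, let $k$ be the position of the letter $n$. Avoidance of $123$ forces $\pi_1 > \pi_2 > \cdots > \pi_{k-1}$, since otherwise an increasing pair $\pi_i < \pi_j$ with $i < j < k$ would yield the $123$ pattern $\pi_i \pi_j n$. If $k \geq 3$, then $\pi_1 \pi_2 n$ forms a $213$ pattern, a contradiction; so $k \in \{1,2\}$. When $k = 1$, the suffix $\pi_2\pi_3\cdots\pi_n$ is an arbitrary element of $\mathfrak{S}_{n-1}(123,132,213)$. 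When $k = 2$, avoidance of $132$ applied to each triple $\pi_1, n, \pi_j$ with $j \geq 3$ forces $\pi_j < \pi_1$; a counting argument on available values then forces $\pi_1 = n-1$, and $\pi_3\cdots\pi_n$ may be any element of $\mathfrak{S}_{n-2}(123,132,213)$. This gives $s_n = s_{n-1} + s_{n-2}$ with $s_0 = s_1 = 1$, matching the Fibonacci recurrence.

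Combining the two facts, the theorem follows by induction on $n$. The base cases $d_1 = 0 = f_0$ and $d_2 = 1 = f_1$ are immediate. For the inductive step, assuming $d_{n-1} = f_{n-2}$ and using $s_{n-1} = f_n$, the recurrence yields $d_n = f_n - f_{n-2} = f_{n-1}$ by the Fibonacci relation. The main (minor) obstacle is the structural description of $\mathfrak{S}_n(123,132,213)$, which requires simultaneously tracking three pattern avoidances; however, once the position of $n$ is pinned down to $\{1,2\}$, the recurrence for $s_n$ follows readily, and everything else is routine.
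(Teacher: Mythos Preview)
Your proof is correct, but it takes a genuinely different route from the paper's. The paper establishes the Fibonacci recurrence for $d_n(123,132,213)$ directly, by constructing an explicit bijection
\[
\varphi\colon \mathfrak{D}_{n}(123,132,213)\longrightarrow \mathfrak{D}_{n-1}(123,132,213)\sqcup\mathfrak{D}_{n-2}(123,132,213)
\]
that operates on the \emph{tail} of the permutation: if $\pi$ ends in $21$, delete both letters and standardize; otherwise delete the last letter and standardize. You instead work at the \emph{front}: using Lemma~\ref{l-desav}\,(a) you write $\pi=n\tau$ and deduce the complementary recurrence $d_n=s_{n-1}-d_{n-1}$, then feed in $s_n(123,132,213)=f_{n+1}$ and use $f_n-f_{n-2}=f_{n-1}$. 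Your argument is in fact the same mechanism the paper itself employs for Theorem~\ref{t-213+231+friends}, transplanted to this pattern set; it is shorter and more uniform with the surrounding results, at the cost of requiring the auxiliary Fibonacci count $s_n=f_{n+1}$ (which you supply). The paper's bijection, by contrast, proves the Fibonacci recurrence for $d_n$ in one stroke without appealing to $s_n$, and the tail-based map generalizes cleanly to the companion class $\mathfrak{D}_n(231,312,321)$ treated in Theorem~\ref{t-231+312+321}.
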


\begin{proof}
For $n\geq3$, define 
\[
\varphi_{123,132,213}\colon\mathfrak{D}_{n}(123,132,213)\rightarrow\mathfrak{D}_{n-1}(123,132,213)\sqcup\mathfrak{D}_{n-2}(123,132,213)
\]
by 
\[
\varphi_{123,132,213}(\pi)\coloneqq\begin{cases}
\std(\pi_{1}\pi_{2}\cdots\pi_{n-2}), & \text{if }\pi_{n-1}\pi_{n}=21,\\
\std(\pi_{1}\pi_{2}\cdots\pi_{n-1}), & \text{otherwise.}
\end{cases}
\]
For example, we have 
\[
\varphi_{123,132,213}(645321)=4231,\;\varphi_{123,132,213}(645231)=53412,\;\text{and}\;\varphi_{123,132,213}(645312)=53421.
\]
The permutations $\pi$ and $\varphi_{123,132,213}(\pi)$ have the
same first ascent unless $\pi=n\cdots21$, in which case the first
ascent of $\pi$ is $n$ and that of $\varphi_{123,132,213}(\pi)$
is $n-2$. Regardless, the parity of the first ascent is preserved,
so $\varphi_{123,132,213}(\pi)$ is indeed a desarrangement.

We claim that $\varphi_{123,132,213}$ is a bijection. To define its
inverse, first suppose that $\pi\in\mathfrak{D}_{n-2}(123,132,213)$,
and let $\varphi_{123,132,213}^{-1}(\pi)$ be the result of adding
2 to each letter of $\pi$ and then appending the letters $21$. For
example, we have 
\[
\varphi_{123,132,213}^{-1}(4231)=645321.
\]
Observe that $\varphi_{123,132,213}^{-1}(\pi)$ is a desarrangement:
if the first ascent of $\pi$ is $n-2$ then the first ascent of $\varphi_{123,132,213}^{-1}(\pi)$
is $n$, and otherwise they have the same first ascent. Moreover,
$\varphi_{123,132,213}^{-1}$ does not create an occurrence of any
of the three forbidden patterns, so $\varphi_{123,132,213}^{-1}(\pi)\in\mathfrak{D}_{n}(123,132,213)$
whenever $\pi\in\mathfrak{D}_{n-2}(123,132,213)$.

Now, suppose that $\pi\in\mathfrak{D}_{n-1}(123,132,213)$. Since
$\pi$ avoids 123 and 132, either $\pi_{n-2}=1$ or $\pi_{n-1}=1$.
If $\pi_{n-2}=1$, then let $\varphi_{123,132,213}^{-1}(\pi)$ be
the result of adding 1 to each letter of $\pi$ and then appending
the letter $1$. Otherwise, if $\pi_{n-1}=1$, then $\varphi_{123,132,213}^{-1}(\pi)$
is the result of adding 1 to each letter of $\pi$ that is at least
2 and then appending the letter $2$. For example, we have 
\[
\varphi_{123,132,213}^{-1}(53412)=645231\quad\text{and}\quad\varphi_{123,132,213}^{-1}(53421)=645312.
\]
It is readily seen that $\pi$ and $\varphi_{123,132,213}^{-1}(\pi)$
have the same first ascent, and that $\varphi_{123,132,213}^{-1}(\pi)$
avoids the three forbidden patterns, so $\varphi_{123,132,213}^{-1}(\pi)\in\mathfrak{D}_{n}(123,132,213)$.
Finally, note that $\varphi_{123,132,213}^{-1}(\pi)$ does not end
with the letters $21$ when $\pi\in\mathfrak{D}_{n-1}(123,132,213)$,
so the maps $\varphi_{123,132,213}$ and $\varphi_{123,132,213}^{-1}$
are indeed inverses.

Since $\varphi_{123,132,213}$ is a bijection, it follows that $d_{n}(123,132,213)$
satisfies the Fibonacci recurrence, and since $d_{1}(123,132,213)=0=f_{0}$
and $d_{2}(123,132,213)=1=f_{1}$, we conclude that $d_{n}(123,132,213)=f_{n-1}$
for all $n\geq1$.
\end{proof}

\begin{thm}
\label{t-231+312+321}For all $n\geq1$, we have $d_{n}(231,312,321)=f_{n-1}$.
\end{thm}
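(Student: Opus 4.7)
The plan is to construct a bijection
\[
\varphi\colon \mathfrak{D}_{n}(231,312,321) \longrightarrow \mathfrak{D}_{n-1}(231,312,321) \sqcup \mathfrak{D}_{n-2}(231,312,321)
\]
that sends $\pi$ to $\pi_{1}\pi_{2}\cdots\pi_{n-1}$ when $\pi_{n} = n$ and to $\pi_{1}\pi_{2}\cdots\pi_{n-2}$ when $\pi_{n-1}\pi_{n} = n(n-1)$. Combined with the easily verified initial values $d_{1}(231,312,321) = 0 = f_{0}$ and $d_{2}(231,312,321) = 1 = f_{1}$, such a bijection yields the Fibonacci recurrence $d_{n} = d_{n-1} + d_{n-2}$, and hence $d_{n}(231,312,321) = f_{n-1}$ for all $n \geq 1$.

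The critical step is a structural dichotomy: for every $\pi \in \mathfrak{D}_{n}(231,312,321)$ with $n \geq 3$, either $\pi_{n} = n$ or $\pi_{n-1}\pi_{n} = n(n-1)$. To establish this, let $i$ be the position of the letter $n$ in $\pi$. If $i \leq n-2$, both $\pi_{i+1}$ and $\pi_{i+2}$ exist and are smaller than $n$, so the subsequence $n\,\pi_{i+1}\,\pi_{i+2}$ standardizes to $312$ when $\pi_{i+1} < \pi_{i+2}$ and to $321$ when $\pi_{i+1} > \pi_{i+2}$, contradicting the avoidance hypothesis. Thus $i \in \{n-1, n\}$. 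If $i = n-1$ and $\pi_{n} \neq n-1$, then the letter $n-1$ occurs at some position $j \leq n-2$, and $\pi_{j}\,\pi_{n-1}\,\pi_{n} = (n-1)\,n\,\pi_{n}$ is a $231$ pattern, again a contradiction; so $\pi_{n} = n-1$ whenever $i = n-1$.

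With the dichotomy in hand, $\varphi$ is well defined: deleting letters cannot create a forbidden pattern, and a short case check shows that the first ascent of $\pi$ coincides with the first ascent of $\varphi(\pi)$. Indeed, at every position up to $|\varphi(\pi)| - 1$ the ascent status of $\pi$ and $\varphi(\pi)$ agrees, while the position just before the maximum letter $n$ (which becomes the final position of $\varphi(\pi)$) is an ascent of $\pi$ because $n$ is the largest letter, and an ascent of $\varphi(\pi)$ by the convention that the last position is always an ascent; in particular, the first ascent of $\pi$ cannot exceed $|\varphi(\pi)|$. Thus $\varphi(\pi)$ remains a desarrangement. The inverse is the obvious appending map $\pi' \mapsto \pi' n$ on $\mathfrak{D}_{n-1}(231,312,321)$ and $\pi'' \mapsto \pi'' n(n-1)$ on $\mathfrak{D}_{n-2}(231,312,321)$. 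Each newly created three-letter subsequence involving at least one appended letter standardizes to one of $123$, $132$, or $213$, so appending introduces no forbidden pattern; and the same argument shows that the first ascent, and hence the desarrangement property, is preserved.

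The main obstacle is the structural dichotomy of the second paragraph; once that is secured, the rest of the argument is a routine verification that $\varphi$ and its inverse are mutually inverse and behave well with respect to the desarrangement condition, and the Fibonacci recurrence delivers the closed form.
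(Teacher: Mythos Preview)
Your proof is correct and follows essentially the same approach as the paper's: both establish the Fibonacci recurrence via the bijection that deletes the final letter $n$ or the final block $n(n-1)$, with inverse given by appending. Your write-up is in fact more explicit than the paper's in justifying the structural dichotomy and the preservation of the first ascent, but the underlying argument is identical.
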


\begin{proof}
Since $d_{1}(231,312,321)=0=f_{0}$ and $d_{2}(231,312,321)=1=f_{1}$,
it suffices to show that $d_{n}(231,312,321)$ satisfies the Fibonacci
recurrence. As in the proof of Theorem \ref{t-123+132+213}, this
shall be accomplished by constructing a bijection 
\[
\varphi_{231,312,321}\colon\mathfrak{D}_{n}(231,312,321)\rightarrow\mathfrak{D}_{n-1}(231,312,321)\sqcup\mathfrak{D}_{n-2}(231,312,321).
\]

Let $n\geq3$. Note that any permutation $\pi\in\mathfrak{S}_{n}(312,321)$
has either $\pi_{n}=n$ or $\pi_{n-1}=n$. If $\pi$ also avoids 231,
then in the case $\pi_{n-1}=n$, we have $\pi_{n}=n-1$. This allows
us to define $\varphi_{231,312,321}$ by
\[
\varphi_{231,312,321}(\pi)\coloneqq\begin{cases}
\pi_{1}\pi_{2}\cdots\pi_{n-2}, & \text{if }\pi_{n-1}\pi_{n}=n(n-1),\\
\pi_{1}\pi_{2}\cdots\pi_{n-2}\pi_{n-1}, & \text{if }\pi_{n}=n.
\end{cases}
\]
Since $\varphi_{231,312,321}$ preserves the first ascent, it follows that $\varphi_{231,312,321}(\pi)$
is a desarrangement for all $\pi\in\mathfrak{D}_{n}(231,312,321)$.

The inverse of $\varphi_{231,312,321}$ is given by 
\[
\varphi_{231,312,321}^{-1}(\pi)=\begin{cases}
\pi n(n-1), & \text{if }\pi\in\mathfrak{D}_{n-2}(231,312,321),\\
\pi n, & \text{if }\pi\in\mathfrak{D}_{n-1}(231,312,321).
\end{cases}
\]
It is readily seen that $\varphi_{231,312,321}^{-1}$ does not create
any occurrences of the three forbidden patterns, and that $\pi$ and
$\varphi_{231,312,321}^{-1}(\pi)$ have the same first ascent, so
$\varphi_{231,312,321}^{-1}(\pi)\in\mathfrak{D}_{n}(231,312,321)$
for all $\pi\in\mathfrak{D}_{n-1}(231,312,321)\sqcup\mathfrak{D}_{n-2}(231,312,321)$.
The maps $\varphi_{231,312,321}$ and $\varphi_{231,312,321}^{-1}$
are inverses by construction, and we are done.
\end{proof}

\subsection{On pattern-avoiding derangements}

Recall that Robertson\textendash Saracino\textendash Zeilberger \cite{Robertson2002}
and Mansour\textendash Robertson \cite{Mansour2002} have studied
the distribution of $\fix$\textemdash the number of fixed points\textemdash over
all avoidance classes $\mathfrak{S}_{n}(\Pi)$ where $\Pi\subseteq\mathfrak{S}_{3}$
and $\left|\Pi\right|\leq3$. Like our work in Section \ref{s-pa},
Robertson\textendash Saracino\textendash Zeilberger and Mansour\textendash Robertson
achieve their results by way of combinatorial and generating function
techniques. Moreover, these results specialize to results about pattern-avoiding
derangements, and we obtain the following theorem by comparing our
results to theirs. Below, $\tilde{d}_{n}(\Pi)$ denotes the number
of derangements in $\mathfrak{S}_{n}(\Pi)$.
\begin{thm}
\label{t-derpa}Let $\Pi\subseteq\mathfrak{S}_{3}$ with $1\leq\left|\Pi\right|\leq3$.
Then $d_{n}(\Pi)=\tilde{d}_{n}(\Pi)$ for all $n\geq0$ if and only
if $\Pi$ is one of the following: $\{132\}$, $\{132,312\}$, $\{132,321\}$,
$\{213,231\}$, $\{123,132,312\}$, $\{123,213,231\}$, $\{123,312,321\}$,
$\{132,312,321\}$, $\{213,231,312\}$, or $\{213,231,321\}$.
\end{thm}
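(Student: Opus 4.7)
The plan is to prove this by straightforward case analysis, since both $d_n(\Pi)$ and $\tilde{d}_n(\Pi)$ are essentially tabulated data for $\Pi\subseteq\mathfrak{S}_3$: our values of $d_n(\Pi)$ are collected in Table~\ref{tb-pa} (and the preliminary propositions), while $\tilde{d}_n(\Pi)$ can be read off from the Robertson--Saracino--Zeilberger and Mansour--Robertson results on the joint distribution of $\fix$ with length-3 pattern classes. Concretely, for each of the $\binom{6}{1}+\binom{6}{2}+\binom{6}{3}=41$ choices of $\Pi$, I would record a closed form (or at least a polynomial sequence) for $\tilde{d}_n(\Pi)$ obtained by setting $k=0$ in the fixed-point-refined enumerations of \cite{Robertson2002,Mansour2002}, and compare it entry by entry with $d_n(\Pi)$ from our Section~\ref{s-pa}.

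For the \emph{if} direction, I would verify the ten listed classes individually. The easiest are the singleton $\Pi=\{132\}$: Robertson--Saracino--Zeilberger show $\tilde{d}_n(132)=F_{n+1}$, matching our Theorem~\ref{t-Fine1}. For the pairs $\{132,312\}$, $\{132,321\}$, $\{213,231\}$, one can either exhibit a simple structural bijection between $\mathfrak{D}_n(\Pi)$ and the derangements in $\mathfrak{S}_n(\Pi)$ (both avoidance classes have transparent descriptions, e.g.\ for $\{132,231\}$ we showed permutations are $\tau 1\rho$ with $\tau$ decreasing and $\rho$ increasing) or directly match the formulas in Table~\ref{tb-pa} with the corresponding derangement counts. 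The five triple classes are handled similarly: in each case both $\mathfrak{D}_n(\Pi)$ and the set of derangements in $\mathfrak{S}_n(\Pi)$ turn out to be small enough to describe explicitly, and the counts $\lfloor n/2\rfloor$, $n-1$ (for odd $n$), or the trivial constants from Propositions~\ref{p-trivf}--\ref{p-trivl} match on the nose. The most efficient write-up is probably a single table listing, for each of the ten classes, the common formula and references to Section~\ref{s-pa} and to \cite{Robertson2002,Mansour2002}.

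For the \emph{only if} direction, I would exhibit, for every remaining $\Pi\subseteq\mathfrak{S}_3$ with $1\le|\Pi|\le3$, a specific $n$ (typically $n=3,4,$ or $5$) where $d_n(\Pi)\ne\tilde d_n(\Pi)$. For example, $d_n(321)=C_{n-1}$ from Theorem~\ref{t-321} but $\tilde d_n(321)$ equals the Motzkin-like sequence from \cite{Robertson2002} that already diverges at $n=4$; for $\Pi=\{213\}$ our sequence begins $1,0,1,1,4,10,\dots$ while the derangement count begins differently; and so on through all non-listed $\Pi$. Since the discrepancy is always detected by an inequality of integer sequences at small $n$, this direction reduces to a finite computer check or a short hand computation for each case.

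The main obstacle is bookkeeping rather than mathematics: there are many subsets $\Pi$ to examine, and one must be careful to invoke the correct trivial results (Propositions~\ref{p-trivf}--\ref{p-trivl}) and the correct references from \cite{Robertson2002,Mansour2002} for the derangement counts, some of which are only stated implicitly (as the $q^0$ coefficient of a joint generating function with $\fix$). A second, smaller subtlety is that Wilf equivalence among derangement classes and among desarrangement classes need not coincide, so one cannot simply quotient by the symmetries of $\mathfrak{S}_3$; each of the 41 subsets genuinely requires its own line in the verification table.
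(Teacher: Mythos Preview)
Your proposal is correct and matches the paper's approach exactly: the paper does not give an explicit proof of this theorem but simply states that it is obtained ``by comparing our results to theirs,'' i.e., by the same case-by-case comparison of the desarrangement counts from Section~\ref{s-pa} with the derangement counts extractable from \cite{Robertson2002,Mansour2002}. Your outline in fact supplies more detail than the paper itself, including the observation that symmetries cannot be used to reduce the casework and that small values of $n$ suffice to separate the non-listed classes.
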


\noindent The equinumerosities in Theorem \ref{t-derpa}\textemdash at
least the nontrivial ones\textemdash cry out for bijective explanations.
We leave this as an open problem to the interested reader.

We have not attempted to refine our results by the $\pix$ statistic,
but it would be interesting to do so. Computational evidence suggests
that the $\pix$ and $\fix$ statistics are equidistributed over $\mathfrak{S}_{n}(\Pi)$
for all but one $\Pi$ from Theorem \ref{t-derpa}, the sole exception
being $\Pi=\{132\}$. We leave the resolution of this conjecture as
an open problem as well.
\begin{conjecture}
Let $\Pi\subseteq\mathfrak{S}_{3}$ with $1\leq\left|\Pi\right|\leq3$.
Then $\pix$ and $\fix$ are equidistributed over $\mathfrak{S}_{n}(\Pi)$
for all $n\geq0$ if and only if $\Pi$ is one of the following: $\{132,312\}$,
$\{132,321\}$, $\{213,231\}$, $\{123,132,312\}$, $\{123,213,231\}$,
$\{123,312,321\}$, $\{132,312,321\}$, $\{213,231,312\}$, or $\{213,231,321\}$.
\end{conjecture}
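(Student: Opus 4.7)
The plan is to prove each direction by a case analysis on $\Pi \subseteq \mathfrak{S}_{3}$ with $1 \leq |\Pi| \leq 3$.

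For the ``only if'' direction, observe that $\pix(\pi) = 0$ characterizes desarrangements and $\fix(\pi) = 0$ characterizes derangements, so equidistribution of $\pix$ and $\fix$ over $\mathfrak{S}_{n}(\Pi)$ forces $d_{n}(\Pi) = \tilde{d}_{n}(\Pi)$ at the zero level. By Theorem \ref{t-derpa}, this equality across all $n$ occurs exactly for the ten sets listed there: the nine of the conjecture together with $\{132\}$, and every other $\Pi$ is thereby ruled out immediately. The remaining case $\Pi = \{132\}$ I would dispose of by direct computation at $n = 4$: the fix-distribution over $\mathfrak{S}_{4}(132)$ is $(6, 4, 3, 0, 1)$ (indexed by $\fix = 0, 1, 2, 3, 4$) while the pix-distribution is $(6, 5, 2, 0, 1)$, so the two are not equidistributed.

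For the ``if'' direction, equidistribution must be established on each of the nine listed avoidance classes. Six of them are very small---the triple-restriction classes $\{123,132,312\}$, $\{123,213,231\}$, $\{123,312,321\}$, $\{132,312,321\}$, $\{213,231,312\}$, and $\{213,231,321\}$---with at most $n$ elements per length and rigid structure forced by Lemma \ref{l-desav}. For instance, $\mathfrak{S}_{n}(132,312,321)$ consists of the $n$ permutations $2, 3, \ldots, k, 1, k+1, \ldots, n$ indexed by $k \in [n]$. For each such class, I would write down an analogous explicit parametrization, compute $\pix$ and $\fix$ in closed form as functions of the parameter, and verify that the two resulting multisets of values agree.

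The remaining three classes $\{132,312\}$, $\{132,321\}$, and $\{213,231\}$ are larger, with $2^{n-1}$, $\binom{n}{2}+1$, and $2^{n-1}$ elements respectively, and demand more substantial arguments. For each, I would exploit a recursive decomposition around a distinguished letter: for $\mathfrak{S}_{n}(132, 312)$, the decomposition $\pi = \tau n \rho$ with $\tau$ on the top letters (avoiding the class) and $\rho$ decreasing on the bottom letters; for $\mathfrak{S}_{n}(213, 231)$, the analogous decomposition around the position of $1$; and for $\mathfrak{S}_{n}(132, 321)$, the structure available from Lemma \ref{l-desav}(d) together with Simion and Schmidt's enumeration. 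I would then set up a joint generating function $\sum_{\pi} s^{\pix(\pi)} t^{\fix(\pi)} x^{|\pi|}$ refined by both statistics, derive a functional equation, and match the two marginals; alternatively one may construct level-preserving bijections in the spirit of the involution $\varphi_{132, 231}$ from Theorem \ref{t-132+231} that send pix-classes to fix-classes.

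The principal obstacle is that $\pix$ is a global statistic, determined by the pixed factorization $\pi = \iota\delta$, whereas $\fix$ is local, so the two interact quite differently with recursive decompositions of avoidance classes. A potentially unified approach is to adapt D\'{e}sarm\'{e}nien's bijection from desarrangements to derangements: given $\pi = \iota\delta$ with pixed-point value-set $S \subseteq [n]$, send $\pi$ to the permutation $\pi^{*}$ defined by $\pi^{*}_{s} = s$ for each $s \in S$, with the D\'{e}sarm\'{e}nien image of $\delta$ placed at the positions in $[n] \setminus S$. This sends $\pix$ to $\fix$ on all of $\mathfrak{S}_{n}$, so the conjecture reduces to checking that the map preserves each of the nine listed avoidance classes while failing to preserve $\mathfrak{S}_{n}(132)$. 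Verifying such preservation class by class will be delicate, since length-three patterns interact subtly with both the pixed factorization and D\'{e}sarm\'{e}nien's map, and this is where I anticipate the main technical difficulty.
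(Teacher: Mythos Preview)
The statement you are attempting to prove is presented in the paper as a \emph{conjecture}, not a theorem; the authors explicitly write ``We leave the resolution of this conjecture as an open problem as well.'' There is therefore no proof in the paper to compare against, and any complete argument you supply would go beyond what the paper establishes.

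Your ``only if'' direction is essentially sound and complete. Equidistribution of $\pix$ and $\fix$ over $\mathfrak{S}_{n}(\Pi)$ does force equality of the zero-level counts $d_{n}(\Pi)=\tilde{d}_{n}(\Pi)$, and Theorem~\ref{t-derpa} then cuts the candidates down to the ten listed sets. Your $n=4$ check for $\Pi=\{132\}$ is correct: the $\fix$-distribution on $\mathfrak{S}_{4}(132)$ is $(6,4,3,0,1)$ while the $\pix$-distribution is $(6,5,2,0,1)$, so $\{132\}$ is eliminated. This half of the conjecture is thus within reach by the route you describe.

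Your ``if'' direction, however, is only a programme, not a proof. For the six triple-restriction classes the explicit parametrizations you allude to do exist and the verification is finite and routine, so that part is plausible. But for the three double-restriction classes you offer two alternative strategies (bivariate generating functions, or a D\'{e}sarm\'{e}nien-type bijection transporting $\pix$ to $\fix$) without carrying either out, and you yourself flag the core difficulty: $\pix$ is governed by the pixed factorization $\pi=\iota\delta$, which cuts the permutation at a position determined by the global parity of the initial descent run, while the standard recursive decompositions of $\mathfrak{S}_{n}(\Pi)$ split around the position of $n$ or $1$. These two cuts need not align, so the functional-equation approach will not be as mechanical as in Section~\ref{s-pa}. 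Likewise, there is no reason to expect D\'{e}sarm\'{e}nien's bijection to preserve an arbitrary avoidance class, and you give no argument that it preserves any of the nine in question. Until one of these strategies is actually executed for at least one of $\{132,312\}$, $\{132,321\}$, $\{213,231\}$, the ``if'' direction remains open---which is consistent with the paper's own assessment.
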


\acknowledgements
We are grateful to an anonymous referee for their careful reading of previous versions of this manuscript and providing extensive feedback which led to significant improvements in the exposition. The authors were partially supported by NSF grant DMS-2316181.

\bibliographystyle{plain}
\bibliography{bibliography}

\end{document}